\newtheorem{theorem}{Theorem}[section]
\newtheorem{lemma}[theorem]{Lemma}
\newtheorem{corollary}[theorem]{Corollary}
\newtheorem*{theorem*}{Theorem}
\theoremstyle{definition}
\newtheorem{definition}[theorem]{Definition}
\theoremstyle{remark}
\newtheorem{remark}[theorem]{Remark}
\newtheorem{example}[theorem]{Example}
\newtheorem*{ack}{Acknowledgements}
\let\inf\relax \DeclareMathOperator*\inf{\vphantom{p}inf}
\newcommand{\VMO}{\mathrm{VMO}}
\newcommand{\BMO}{\mathrm{BMO}}
\newcommand{\cF}{\mathcal{F}}
\newcommand{\starF}{\mathcal{F}^*}
\newcommand{\cH}{\mathcal{H}}
\newcommand{\starH}{\mathcal{H}^*}
\newcommand{\cS}{\mathcal{S}}
\newcommand{\cT}{\mathcal{T}}
\newcommand{\cTp}{{\mathcal{T}^\perp}}
\newcommand{\hz}{\widehat{\zeta}}
\newcommand{\RR}{\mathbb{R}}
\newcommand{\CC}{\mathbb{C}}
\newcommand{\ccS}{\overline{\mathcal{S}}}
\newcommand{\ccT}{{\overline{\mathcal{T}}}}
\newcommand{\ball}{B}
\newcommand{\ex}{\mathop\mathrm{excess}\nolimits}
\newcommand{\gap}{\mathop\mathrm{gap}\nolimits}
\newcommand{\dist}{\mathop\mathrm{dist}\nolimits}
\newcommand{\diam}{\mathop\mathrm{diam}\nolimits}
\newcommand{\Tan}{\mathop\mathrm{Tan}\nolimits}
\newcommand{\PsTan}{\mathop{\Psi\mbox{-}\mathrm{Tan}}\nolimits}
\newcommand{\D}[2]{\mathop{\mathrm{D}^{#1,#2}}\nolimits}
\newcommand{\mD}[2]{\mathop{\widetilde{\mathrm{D}}^{#1,#2}}\nolimits}
\newcommand{\mud}[2]{\mathop{\widetilde{\mathrm{d}}^{\,#1, #2}}\nolimits}
\newcommand{\covplain}{\mathop\mathrm{N}\nolimits}
\newcommand{\udim}{\mathop{\overline{\mathrm{dim}}}\nolimits}
\newcommand{\sign}{\mathop\mathrm{sign}\nolimits}
\newcommand{\cl}[1]{{\overline{#1}}}
\newcommand{\sing}[1]{\mathop\mathrm{sing}_{#1}\nolimits}
\newcommand{\Vol}{\mathop\mathrm{Vol}\nolimits}
\newcommand{\CL}[1]{\mathfrak{C}(#1)}
\numberwithin{equation}{section}
\numberwithin{figure}{section}
\begin{document}

\title[Sets well approximated by zero sets of harmonic polynomials]{Structure of sets which are well approximated by zero sets of harmonic polynomials}
\date{February 1, 2017}
\author{Matthew Badger}
\author{Max Engelstein}
\author{Tatiana Toro}
\thanks{M.~Badger was partially supported by NSF grant DMS 1500382. M.~Engelstein was partially supported by an NSF Graduate Research Fellowship, NSF DGE 1144082. T.~Toro was partially supported by NSF grant DMS 1361823, and the Robert R.~\& Elaine F.~Phelps Professorship in Mathematics.}
\subjclass[2010]{ Primary 33C55, 49J52. Secondary 28A75, 31A15, 35R35}
\keywords{Reifenberg type sets, harmonic polynomials, \L ojasiewicz type inequalities, singular set, Hausdorff and Minkowski dimensions, two-phase free boundary problems, harmonic measure, NTA domains}
\address{Department of Mathematics\\ University of Connecticut\\ Storrs, CT 06269-3009}
\email{matthew.badger@uconn.edu}
\address{Department of Mathematics\\Massachusetts Institute of Technology\\Cambridge, MA, 02139-4307 }
\email{maxe@mit.edu}
\address{Department of Mathematics\\ University of Washington\\ Box 354350\\ Seattle, WA 98195-4350}
\email{toro@uw.edu}

\begin{abstract} The zero sets of harmonic polynomials play a crucial role in the study of the free boundary regularity problem for harmonic measure. In order to understand the fine structure of these free
boundaries a detailed study of the singular points of these zero sets is required. In this paper we study how ``degree $k$ points" sit inside zero sets of harmonic polynomials in $\RR^n$ of degree $d$ (for all $n\geq 2$ and $1\leq k\leq d$) and inside sets that admit arbitrarily good local approximations by zero sets of harmonic polynomials. We obtain a general structure theorem for the latter type of sets, including sharp Hausdorff and Minkowski dimension estimates on the singular set of ``degree $k$ points" ($k\geq 2$) without proving uniqueness of blowups or aid of PDE methods such as monotonicity formulas. In addition, we show that in the presence of a certain topological separation condition, the sharp dimension estimates improve and depend on the parity of $k$. An application is given to the two-phase free boundary regularity problem for harmonic measure below the continuous threshold introduced by Kenig and Toro.\end{abstract}

\maketitle

\setcounter{tocdepth}{1}
\tableofcontents

\section{Introduction}\label{sec:intro}

In this paper, we study the geometry of sets that admit arbitrarily good local approximations by zero sets of harmonic polynomials. As our conditions are reminiscent of those introduced by Reifenberg
\cite{Reifenberg}, we often refer to these sets as Reifenberg type sets which are well approximated by zero sets of harmonic polynomials.
This class of sets plays a crucial role in the study of a two-phase free boundary problem for harmonic measure with weak initial regularity, examined first by Kenig and Toro \cite{kenigtorotwophase}
and subsequently by
Kenig, Preiss and Toro \cite{kenigpreisstoro}, Badger \cite{badgerharmonicmeasure,badgerflatpoints}, Badger and Lewis \cite{localsetapproximation}, and Engelstein \cite{engelsteintwophase}.
Our results are partly motivated by several open questions about the structure and size of the singular set in the free boundary, which we answer definitively below.
In particular, we obtain sharp bounds on the upper Minkowski and Hausdorff dimensions of the singular set, which depend on the degree of blowups of the boundary.
It is important to remark that this is one of those rare instances in which a singular set of a non-variational problem can be well understood. Often, in this type of question, the lack of a monotonicity formula
is a serious obstacle.
A remarkable feature of the proof is that
\L ojasiewicz type inequalities for harmonic polynomials
are used to establish a relationship between the terms in the Taylor expansion of a harmonic polynomial at a given point in its zero set and the extent to which this zero
set can be approximated by the zero set of a lower order harmonic polynomial (see \S\S 3 and 4).
In a broader context, this paper also complements the recent investigations by Cheeger, Naber, and Valtorta \cite{cheegernabervaltorta} and Naber and Valtorta \cite{nabervaltorta} into volume estimates for the critical sets of harmonic functions and solutions to certain  second-order elliptic operators with Lipschitz coefficients. Detailed descriptions of these past works and new results appear below, after we introduce some requisite notation.

For all $n\geq 2$ and $d\geq 1$, let $\cH_{n,d}$ denote the collection of all zero sets $\Sigma_p$ of nonconstant harmonic polynomials $p:\RR^n\rightarrow\RR$ of degree at most $d$ such that $0\in\Sigma_p$ (i.e.~$p(0)=0$). For every nonempty set $A\subseteq\RR^n$, location $x\in A$, and scale $r>0$, we introduce the \emph{bilateral approximation number} $\Theta^{\cH_{n,d}}_A(x,r)$, which, roughly speaking, records how well $A$ looks like some zero set of a harmonic polynomial of degree at most $d$ in the open ball
$B(x,r)=\{y\in\RR^n:|y-x|<r\}$:
\begin{equation}\label{bil-app-num}
 \Theta^{\cH_{n,d}}_A(x,r) = \frac{1}{r}\inf_{\Sigma_p\in\cH_{n,d}}\max\left\{ \sup_{a\in A\cap B(x,r)} \dist(a,x+\Sigma_p),\  \sup_{z\in (x+\Sigma_p)\cap B(x,r)}\dist(z,A)\right\}\in[0,1].
\end{equation}
 When $\Theta^{\cH_{n,d}}_A(x,r)=0$, the closure, $\overline{A}$, of $A$ coincides with the zero set of some harmonic polynomial of degree at most $d$ in $B(x,r)$. At the other extreme, when $\Theta^{\cH_{n,d}}_A(x,r)\sim 1$, the set $A$ stays ``far away" in $B(x,r)$ from every zero set of a nonconstant harmonic polynomial of degree at most $d$ containing $x$. We observe that the approximation numbers are scale-invariant in the sense that $\Theta^{\cH_{n,d}}_{\lambda A}(\lambda x,\lambda r) = \Theta^{\cH_{n,d}}_A(x,r)$ for all $\lambda>0$.
A point $x$ in a nonempty set $A$ is called an \emph{$\cH_{n,d}$ point} of $A$ if $\lim_{r\rightarrow 0} \Theta^{\cH_{n,d}}_A(x,r)=0$.

For all $n\geq 2$ and $k\geq 1$, let $\cF_{n,k}$ denote the collection of all zero sets of homogeneous harmonic polynomials $p:\RR^n\rightarrow\RR$ of degree $k$. We note that $$\cF_{n,k}\subseteq \cH_{n,d}\quad\text{whenever }1\leq k\leq d.$$ For every nonempty set $A\subseteq\RR^n$, $x\in A$, and $r>0$, the bilateral approximation number $\Theta_A^{\cF_{n,k}}(x,r)$ is defined analogously to $\Theta_A^{\cH_{n,d}}(x,r)$ except that the zero set $\Sigma_p$ in the infimum ranges over $\cF_{n,k}$ instead of $\cH_{n,d}$. A point $x$ in a nonempty set $A$ is called an \emph{$\cF_{n,k}$ point} of $A$ if $\lim_{r\rightarrow 0} \Theta^{\cF_{n,k}}_A(x,r)=0$. This means that infinitesimally at $x$, $A$ looks like the zero set of a homogeneous harmonic polynomial of degree $k$.

We say that a nonempty set $A\subseteq\RR^n$ is \emph{locally bilaterally well approximated by $\cH_{n,d}$} if for all $\varepsilon>0$ and for all compact sets $K\subseteq A$ there exists $r_{\varepsilon,K}>0$ such that $\Theta_A^{\cH_{n,d}}(x,r)\leq \varepsilon$ for all $x\in K$ and $0<r\leq r_{\varepsilon,K}$. When $k=1$, $\cH_{n,1}=\cF_{n,1}=G(n,n-1)$ is the collection of codimension 1 hyperplanes through the origin and sets $A$ that are locally bilaterally well approximated by $\cH_{n,1}$ are also called \emph{Reifenberg flat sets with vanishing constant} or \emph{Reifenberg vanishing sets} (e.g., see \cite{davidkenigtoro}). Our initial result is the following structure theorem for sets that are locally bilaterally well approximated by $\cH_{n,d}$.

\begin{theorem}\label{t:main1} Let $n\geq 2$ and $d\geq 2$. If $A\subseteq\RR^n$ is locally bilaterally well approximated by $\cH_{n,d}$, then we can write $A$ as a disjoint union, \begin{equation*}\label{e:a123} A=A_1\cup\dots\cup A_{d}\quad(i\neq j\Rightarrow A_i\cap A_j=\emptyset),\end{equation*} with the following properties. \begin{enumerate}
\item For all $1\leq k\leq d$,  $x\in A_k$ if and only if $x$ is an $\cF_{n,k}$ point of $A$.
\item For all $1\leq k\leq d$, the set $U_k:=A_1\cup\dots \cup A_k$ is relatively open in $A$.
\item For all $1\leq k\leq d$, $U_k$ is locally bilaterally well approximated by $\cH_{n,k}$.
\item For all $2\leq k\leq d$, $A$ is locally bilaterally well approximated along $A_k$ by $\cF_{n,k}$, i.e.~$\limsup_{r\downarrow 0} \sup_{x\in K} \Theta_A^{\cF_{n,k}}(x,r)=0$ for every compact set $K\subseteq A_k$.
\item For all $1\leq l<k\leq d$, $U_l$ is relatively open in $U_k$ and $A_{l+1}\cup\dots \cup A_k$ is relatively closed in $U_k$.
\item The set $A_1$ is relatively dense in $A$, i.e.~ $\overline{A_1}\cap A=A$.
\end{enumerate} If, in addition, $A$ is closed and nonempty, then
\begin{enumerate}
\item[(vii)] $A$ has upper Minkowski dimension and Hausdorff dimension $n-1$; and,
\item[(viii)] $A\setminus A_1=A_2\cup\dots \cup A_{d}$ has upper Minkowski dimension at most $n-2$.
\end{enumerate}
\end{theorem}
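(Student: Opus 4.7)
\medskip
\noindent\textbf{Proof plan.} I would begin by defining $A_k:=\{x\in A:x\text{ is an }\cF_{n,k}\text{ point of }A\}$, so that (i) reduces to showing these sets partition $A$. Given $x\in A$, any subsequential Hausdorff limit $Z$ of $r_j^{-1}(A-x)$ with $r_j\downarrow 0$ is a blow-up cone which, by the well-approximation hypothesis, is also a Hausdorff limit of zero sets of harmonic polynomials $p_j$ of degree at most $d$. After normalizing $p_j$ in $L^{\infty}(B(0,1))$, compactness on the finite-dimensional space of harmonic polynomials of bounded degree produces $Z=\Sigma_q$ with $q$ harmonic, $q(0)=0$, and $\deg q\leq d$. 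Writing $q=q_k+q_{k+1}+\cdots$ with $q_k\not\equiv 0$ the lowest homogeneous part, and using that $Z$ is a cone, the rescaling $\lambda^{-k}q(\lambda\,\cdot\,)\to q_k$ forces $Z=\Sigma_{q_k}\in\cF_{n,k}$. Uniqueness of the integer $k$ across all blow-up sequences (hence disjointness of the $A_k$) is the crux, and is where the \L ojasiewicz-type inequalities of \S\S 3--4 enter: they pin down $k$ as the order of vanishing at $x$ of any sufficiently good harmonic approximant, a quantity independent of the approximant.

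\medskip
For parts (ii)--(v), the key quantitative consequence of those \L ojasiewicz inequalities is upper semicontinuity of the degree function $x\mapsto k(x)$ on $A$: if $x\in A_k$ then every $y\in A$ close enough to $x$ lies in $A_\ell$ for some $\ell\leq k$, and moreover $A$ is $\varepsilon$-bilaterally approximated by $\cH_{n,k}$ uniformly in $y$ near $x$. This immediately gives (ii) and (v); combined with the openness of $U_k$ in $A$, the $\cH_{n,k}$ approximation of $A$ transfers to $U_k$ at small scales, which after a compactness/diagonal extraction on the approximating polynomials yields the uniform statement (iii). For (iv), starting from the $\cH_{n,k}$ approximation, I would argue by contradiction: failure of uniform $\cF_{n,k}$ approximation on a compact $K\subseteq A_k$ produces, via blow-up as in (i), a limit object still in $\cH_{n,k}$; a further application of the \L ojasiewicz inequality forces the lower-degree Taylor terms at the center to vanish, yielding a contradictory $\cF_{n,k}$ limit.

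\medskip
For (vi), pick $x\in A_k$ with $k\geq 2$ and apply (iv) at a small scale to obtain $p\in\cF_{n,k}$ with $A\cap B(x,r)$ bilaterally $\varepsilon r$-close to $\Sigma_p\cap B(x,r)$. The regular locus $\Sigma_p\setminus\{\nabla p=0\}$ is a nonempty relatively open real-analytic hypersurface, so I can find $y\in\Sigma_p$ near $x$ at which $\Sigma_p$ is a graph with tiny Lipschitz constant; bilateral closeness supplies $a\in A$ near $y$. The stability from (ii)--(iv) shows that the harmonic approximant of $A$ near $a$ has nonvanishing gradient at $a$, so $a\in A_1$. Letting $\varepsilon,r\downarrow 0$ proves density.

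\medskip
Finally, for (vii)--(viii) I would invoke uniform Crofton-type bounds: a real algebraic hypersurface in $\RR^n$ of degree $\leq d$ intersected with a unit ball admits a cover by $\lesssim\rho^{-(n-1)}$ balls of radius $\rho$, and the singular locus $\{p=0,\nabla p=0\}$ of a nonzero harmonic polynomial of degree $\leq d$ is a real algebraic variety of dimension $\leq n-2$ with bounded complexity. Transferring these covers through the bilateral approximations of (iii) and (iv) gives $\udim A\leq n-1$ and $\udim(A\setminus A_1)\leq n-2$; the Hausdorff dimension lower bound for $A$ follows from (vi) and the fact that $\cF_{n,1}=G(n,n-1)$. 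The principal obstacle throughout is obtaining the \L ojasiewicz-type estimates uniformly in a neighborhood, without invoking blow-up uniqueness or a monotonicity formula -- the technical heart of the paper.
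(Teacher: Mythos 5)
Your overall strategy---use \L ojasiewicz-type control to pin down a degree at each point, then transfer covering bounds for the model sets to $A$---is the same in spirit as the paper's, and your sketch of (vi) closely parallels the actual argument. But there are two genuine gaps. First, in (i) you assert that every subsequential limit $Z$ of $r_j^{-1}(A-x)$ is a cone and then use dilation invariance to force $Z=\Sigma_{q_k}\in\cF_{n,k}$. Compactness and the approximation hypothesis only give $Z\in\cH_{n,d}$ (Theorem \ref{t:tangent-well} and Corollary \ref{c:closed}), i.e.\ $Z=\Sigma_q$ for some harmonic $q$ of degree at most $d$ vanishing at $0$; the class $\cH_{n,d}$ contains many non-conical sets, and without a monotonicity formula or a quantitative improvement estimate there is no a priori reason tangent sets of a Reifenberg-type set are dilation invariant. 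Homogeneity of all blowups at every point is essentially equivalent to the stratification you are trying to prove, so assuming it at the outset is circular; likewise ``the order of vanishing of any sufficiently good harmonic approximant'' is not well defined until one shows that a single good approximation by $\cH_{n,k}$ at one coarse scale propagates, with a rate, to all finer scales. That propagation is exactly Theorem \ref{t:main2}, and the paper obtains (i)--(v) by feeding it into the abstract structure theorem for Reifenberg-type sets (Theorem \ref{t:open}), applied iteratively for $k=d-1,\dots,1$; homogeneity of tangent sets along $A_k$ comes out of this scheme rather than going into it. (Disjointness of the $A_k$ is the easy half, via Lemma \ref{zerosetsarefarapart}.)

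Second, in (vii)--(viii) your Crofton/bounded-complexity covering bounds control the \emph{model} sets, and they are a reasonable substitute for the Naber--Valtorta tube estimates the paper cites; but they give nothing for $A\setminus A_1$ until you know that $A\setminus A_1$ is locally \emph{unilaterally} well approximated by the class $\cS\cH_{n,d}$ of singular sets of harmonic polynomials. Statements (iii)--(iv) only say that $A$ is close to zero sets; a point of $A\setminus A_1$ could a priori sit $\varepsilon r$-close to a regular portion of the approximating zero set, where your covers of the singular locus are irrelevant. Ruling this out again uses detectability with $k=1$ (if $A$ were hyperplane-like near such a point at a single scale, that point would lie in $A_1$), and in the paper this transfer is Theorem \ref{singapprox} together with the identification of the singular-part class with $\cS\cH_{n,d}$ via Theorem \ref{t:main2}; it is a theorem in its own right, not a formal consequence of (iii)--(iv). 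The same criterion, not ``stability from (ii)--(iv),'' is what turns the final step of your (vi) into $a\in A_1$ (Remark \ref{r:detectx}). Finally, the lower bound $\dim_H A\geq n-1$ needs more than ``(vi) and $\cF_{n,1}=G(n,n-1)$'': the paper uses that $A_1$ is nonempty, relatively open, and Reifenberg vanishing, hence a topological $(n-1)$-manifold by Reifenberg's disk theorem.
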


\begin{remark} If $\Sigma_p\in\cH_{n,d}$, then $\Sigma_p$ is locally bilaterally well approximated by $\cH_{n,d}$, simply because $\Theta^{\cH_{n,d}}_{\Sigma_p}(x,r)=0$ for all $x\in \Sigma_p$ and $r>0$. Since $A=\Sigma_p$ corresponding to  $p(x_1,\dots,x_n)=x_1x_2$ has $A_2=\{0\}^2\times\RR^{n-2}$, we see that the dimension bounds on $A\setminus A_1$ in Theorem \ref{t:main1} hold by example, and thus, are generically the best possible.\end{remark}

\begin{remark}\label{r:reif} Note that $A_1$ is nonempty if $A$ is nonempty by (vi), $A_1$ is locally closed if $A$ is closed by (ii), and $A_1$ is locally Reifenberg flat with vanishing constant by (iii). Therefore, by Reifenberg's topological disk theorem (e.g., see \cite{Reifenberg} or \cite{davidtoro-holes}), $A_1$ admits local bi-H\"older parameterizations by open subsets of $\RR^{n-1}$ with bi-H\"older exponents arbitrarily close to 1 provided that $A$ is closed and nonempty. However, we emphasize that while $A_1$ always has Hausdorff dimension $n-1$ under these conditions, $A_1$ may potentially have locally infinite $(n-1)$-dimensional Hausdorff measure or may even be purely unrectifiable (e.g., see \cite{davidtoro-snow}).\end{remark}

The proof of Theorem \ref{t:main1} uses a general structure theorem for Reifenberg type sets, developed in \cite{localsetapproximation}, as well as uniform Minkowski content estimates for the zero and singular sets of harmonic polynomials from \cite{nabervaltorta}.  A Reifenberg type set is a set $A\subseteq\RR^n$ that admits uniform local bilateral approximations by sets in a cone $\cS$ of model sets in $\RR^n$. In the present setting, the role of the model sets $\cS$ is played by $\cH_{n,d}$.  For background on the theory of local set approximation and summary of results from \cite{localsetapproximation}, we refer the reader to Appendix \ref{sect:2}. The core geometric result at the heart of Theorem \ref{t:main1} is the following property of zero sets of harmonic polynomials: $\cH_{n,k}$ points can be detected in zero sets of harmonic polynomials of degree $d$ $(1\leq k\leq d)$ by finding a \emph{single}, sufficiently good approximation at a coarse scale. The precise statement is as follows.

\begin{theorem}\label{t:main2} For all $n\geq 2$ and $1\leq k<d$, there exists a constant $\delta_{n,d,k}>0$, depending only on $n$, $d$, and $k$, such that for any harmonic polynomial $p:\RR^n\rightarrow \RR$ of degree $d$ and, for any $x\in \Sigma_p$, \begin{align*} \partial^\alpha p(x)&=0\quad \text{ for all }|\alpha|\leq k  &&\Longleftrightarrow &&\Theta_{\Sigma_p}^{\cH_{n,k}}(x,r) \geq \delta_{n,d,k}\quad \text{for all }r>0,\\
\partial^\alpha p(x)&\neq 0\quad\text{ for some }|\alpha|\leq k &&\Longleftrightarrow &&\Theta_{\Sigma_p}^{\cH_{n,k}}(x,r) < \delta_{n,d,k}\quad\text{for some }r>0.\end{align*} Moreover, there exists a constant $C_{n,d,k}>1$ depending only on $n$, $d$, and $k$ such that
\begin{equation}\begin{split} \label{e:detect}
\Theta_{\Sigma_p}^{\cH_{n,k}}(x,r)<\delta_{n,d,k}&\text{ for some } r>0\\
&\Longrightarrow\ \Theta_{\Sigma_p}^{\cH_{n,k}}(x,sr)< C_{n,d,k}\,s^{1/k}\text{ for all } s\in(0,1).
\end{split}\end{equation}
\end{theorem}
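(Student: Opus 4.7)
The plan is to reduce by translation to $x=0$ and, using the scale invariance of $\Theta$, to unit scale. Expand $p$ at the origin as $p(y)=\sum_{j=1}^d P_j(y)$, where $P_j$ is the homogeneous component of degree $j$; since $\Delta$ preserves homogeneity, each $P_j$ is itself a homogeneous harmonic polynomial, and the hypothesis $\partial^\alpha p(0)=0$ for all $|\alpha|\leq k$ is equivalent to $P_1=\cdots=P_k=0$. The theorem thus amounts to detecting, through a single bilateral approximation by $\cH_{n,k}$, whether or not the first $k$ homogeneous Taylor components of $p$ at the base point vanish.

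The easy direction is soft. If some $\partial^\alpha p(0)\neq 0$ with $|\alpha|\leq k$, let $j_*\colonequals \min\{j:P_j\neq 0\}\leq k$. The rescalings $p_s(y)\colonequals s^{-j_*}p(sy)$ converge uniformly on $B(0,1)$ to $P_{j_*}$ as $s\downarrow 0$, so the rescaled zero sets $s^{-1}\Sigma_p$ converge in local Hausdorff distance on $B(0,1)$ to $\Sigma_{P_{j_*}}\in\cH_{n,k}$. Hence $\Theta^{\cH_{n,k}}_{\Sigma_p}(0,s)\to 0$, which gives $\Theta^{\cH_{n,k}}_{\Sigma_p}(0,s)<\delta_{n,d,k}$ at some scale regardless of how $\delta_{n,d,k}>0$ is chosen.

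The hard direction, which actually pins down $\delta_{n,d,k}$, is a contradiction-and-compactness argument. Suppose no such $\delta_{n,d,k}>0$ exists: there is a sequence of harmonic polynomials $p_m$ of degree $\leq d$ vanishing to order $\geq k+1$ at $0$, scales $r_m>0$, and approximants $\Sigma_{\tilde q_m}\in \cH_{n,k}$ with $\Theta^{\cH_{n,k}}_{\Sigma_{p_m}}(0,r_m)\to 0$. Replacing $p_m(y)$ by $p_m(r_m y)$ (still harmonic of degree $\leq d$, with the same order of vanishing at $0$) reduces to $r_m=1$, and replacing $p_m$ and $\tilde q_m$ by nonzero scalar multiples (which leaves $\Sigma_{p_m}$ and $\Sigma_{\tilde q_m}$ unchanged) normalizes $\max_{B(0,1)}|p_m|=\max_{B(0,1)}|\tilde q_m|=1$. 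Bounded sequences in the finite-dimensional spaces of harmonic polynomials of degrees $\leq d$ and $\leq k$ vanishing at $0$ admit convergent subsequences, yielding nonzero limits $p_\infty$ (degree $\leq d$, vanishing to order $\geq k+1$ at $0$) and $\tilde q_\infty$ (degree $\leq k$, with $\tilde q_\infty(0)=0$); local Hausdorff convergence of zero sets then gives $\Sigma_{p_\infty}\cap B(0,1)=\Sigma_{\tilde q_\infty}\cap B(0,1)$. The crux is the lemma that two harmonic polynomials whose zero sets agree on a nonempty open set must vanish to the same order at each interior point. Applied at $0$, this forces $\tilde q_\infty$ to vanish to order $\geq k+1$ while having degree $\leq k$, so $\tilde q_\infty\equiv 0$; then $\Sigma_{p_\infty}\supseteq B(0,1)$ and unique continuation forces $p_\infty\equiv 0$, contradicting the normalization.

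The main obstacle is exactly this matching-of-orders lemma, and it is where the \L{}ojasiewicz-type inequalities for harmonic polynomials (promised in the introduction and developed in \S\S 3--4) enter: via a blowup at $0$ the lemma reduces to the claim that two nonzero homogeneous harmonic polynomials of distinct degrees cannot share a zero set in $\RR^n$, which follows from the characterization of their restrictions to $S^{n-1}$ as spherical harmonic eigenfunctions with distinct eigenvalues and hence distinct nodal sets. The quantitative decay \eqref{e:detect} then drops out by combining the uniform Taylor bound $|p(y)-P_{j_*}(y)|\lesssim |y|^{j_*+1}$ (with a constant depending only on $n,d,k$, by another appeal to the compactness normalization once $\Theta^{\cH_{n,k}}_{\Sigma_p}(0,1)<\delta_{n,d,k}$ is assumed) with the \L{}ojasiewicz inequality $\dist(y,\Sigma_{P_{j_*}})\lesssim |P_{j_*}(y)|^{1/j_*}$ for the homogeneous harmonic leading term $P_{j_*}$: at scale $s\in(0,1)$ these combine to bound the bilateral Hausdorff distance between $\Sigma_p\cap B(0,s)$ and $\Sigma_{P_{j_*}}\cap B(0,s)$ by a constant multiple of $s^{1+1/j_*}$, and dividing by $s$ yields $\Theta^{\cH_{n,k}}_{\Sigma_p}(0,s)\lesssim s^{1/j_*}\leq s^{1/k}$ since $s\in(0,1)$ and $j_*\leq k$.
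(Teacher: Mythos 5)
Your qualitative dichotomy is essentially the paper's route: a normal-families argument reducing to the fact that two homogeneous harmonic polynomials of different degrees cannot have the same zero set (the paper's Lemmas \ref{convergenceofzerosets}, \ref{l:2side}, \ref{zerosetsarefarapart}, \ref{ifthetaissmallthensoiszeta}). Note, however, that the separation fact is the crux and you only assert it ("distinct eigenvalues and hence distinct nodal sets" is not a standard theorem); the paper proves it by showing the zero set determines the sign pattern of the components (Lemma \ref{l:2side}), so that equal zero sets would force $pq\geq 0$ or $pq\leq 0$ everywhere, contradicting orthogonality of spherical harmonics in $L^2(S^{n-1})$. A short eigenvalue argument (both restrictions would be principal Dirichlet eigenfunctions of a common nodal domain on the sphere) could substitute, but something must be said.

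The genuine gap is in your derivation of \eqref{e:detect}. You compare $\Sigma_p$ at scale $s$ with $\Sigma_{P_{j_*}}$, the zero set of the single lowest nonzero Taylor term, and claim $\Theta_{\Sigma_p}^{\cH_{n,k}}(x,s)\leq C_{n,d,k}\,s^{1/j_*}$. The hypothesis $\Theta_{\Sigma_p}^{\cH_{n,k}}(x,r)<\delta_{n,d,k}$ controls the size of the terms of degree $>k$ relative to the \emph{whole} degree-$\leq k$ part, but gives no lower bound on the size of $P_{j_*}$ relative to the other low-order terms $P_{j_*+1},\dots,P_k$; hence the \L ojasiewicz constant for $P_{j_*}$ (equivalently, the scale below which $P_{j_*}$ dominates) depends on $p$, not only on $n,d,k$. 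Concretely, take $p=\epsilon y_1+y_1y_2+\eta q_3$ with $\eta$ tiny, $k=2<d=3$: then $j_*=1$, $\Theta_{\Sigma_p}^{\cH_{n,2}}(0,1)$ is as small as you like, yet for $\epsilon\ll s$ the set $\Sigma_p\cap B(0,s)$ contains points (on the branch $y_2\approx-\epsilon$) at distance comparable to $s$ from the hyperplane $\{y_1=0\}$, so no bound of the form $Cs^{1+1/j_*}$ on the Hausdorff distance, with $C=C(n,d,k)$, can hold; indeed your claimed exponent $1/j_*$ is strictly stronger than the theorem's $1/k$ and is false uniformly. (You also never address the reverse inclusion, that points of the model zero set lie near $\Sigma_p$, which requires a sign-change argument as in Lemma \ref{polynomialsarebalanced} plus the intermediate value theorem.) The repair is exactly what the paper does: approximate by the zero set of the full low-order part $\tilde p=p^{(x)}_1+\dots+p^{(x)}_k\in\cH_{n,k}$ and control $\Theta_{\Sigma_p}^{\cH_{n,k}}(x,r)\lesssim \hz_k(p,x,r)^{1/k}$ (Lemma \ref{thetacontrolledbyzeta}); upgrade your compactness step to the quantitative statement that $\Theta_{\Sigma_p}^{\cH_{n,k}}(x,r)<\delta$ forces $\hz_k(p,x,r)<\delta^{-1}$ (Lemma \ref{ifthetaissmallthensoiszeta}); and then use the change-of-scales estimate $\hz_k(p,x,sr)\lesssim s\,\hz_k(p,x,r)$ (Lemma \ref{l:quasilinear}) to get $\Theta_{\Sigma_p}^{\cH_{n,k}}(x,sr)\lesssim s^{1/k}$.
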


In particular, applying \eqref{e:detect} with $\Sigma_p\in\cH_{n,d}$ and $x=0$, we obtain the following property.

\begin{corollary}\label{c:main2} In the language of Definition \ref{d:Tpdp}, $\cH_{n,k}$ points are detectable in $\cH_{n,d}$.
\end{corollary}

\begin{remark} The reader may recognize \eqref{e:detect} as an ``improvement type lemma'', which is often obtained as a consequence of a monotonicity formula or a blow-up argument. Here this improvement result states that
at every $\cH_{n,k}$ point in the zero set $\Sigma_p$ of a harmonic polynomial of degree $d>k$, the zero set $\Sigma_p$ resembles the zero set of a harmonic polynomial of degree at most $k$ at scale
$r$ \emph{with increasing certainty}  as $r\downarrow 0$. In fact, \eqref{e:detect} yields a precise rate of convergence for the approximation number $\Theta_{\Sigma_p}^{\cH_{n,k}}(x,sr)$ as $s$ goes to 0 provided
$\Theta_{\Sigma_p}^{\cH_{n,k}}(x,r)$ is small enough. However, we would like to emphasize that the proof of
Theorem \ref{t:main1} does not require monotone convergence nor a definite rate of convergence of the blowups $(A-x)/r$ of the set $A$ as $r\downarrow 0$. Rather, the proof of Theorem \ref{t:main1} relies only on the fact that the
\emph{pseudotangents} $T=\lim_{i\rightarrow\infty}(A-x_i)/t_i$ of $A$ at $x$ (along sequences $x_i\rightarrow x$ in $A$ and $t_i\downarrow 0$) satisfy \eqref{e:detect}. The authors expect that both this
``improvement type lemma"  as well as the way in which it is applied in the proof of Theorem \ref{t:main1}
should be
useful in other situations where questions about the structure and size of sets with singularities arise. \end{remark}

In the special case when $k=1$, Theorem \ref{t:main2} first appeared in \cite[Theorem 1.4]{badgerflatpoints}. The proof of the general case, given in \S\S 2--4 below, follows the same guidelines, but requires more sophisticated estimates. In particular, in \S3, we establish uniform growth and size estimates for harmonic polynomials of bounded degree. Of some note, we prove that harmonic polynomials of bounded degree satisfy a \L ojasiewicz type inequality with uniform constants (see Theorem \ref{lojasiewiczinequality}). These estimates are essential to show that the approximability $\Theta^{\cH_{n,k}}_{\Sigma_p}(x,r)$ of a zero set $\Sigma_p\in\cH_{n,d}$ is controlled from above by the relative size $\widehat{\zeta}_k(p,x,r)$  of the terms of degree at most $k$ appearing in the Taylor expansion for $p$ at $x$ (see Definition \ref{modifiedzeta} and Lemma \ref{thetacontrolledbyzeta}).

Applied to harmonic polynomials of degree at most $d$, \cite[Theorem A.3]{nabervaltorta} says that
\begin{equation}\label{e:nv1}\Vol\big(\{x\in B(0,1/2):\dist(x,\Sigma_p)\leq r\}\big)\leq (C(n)d)^d\,r\quad\text{for all }\Sigma_p\in\cH_{n,d},\end{equation} and  \cite[Theorem 3.37]{nabervaltorta} says that
\begin{equation}\label{e:nv2}\Vol\big(\{x\in \ball(0,1/2): \dist(x,S_p)\leq r\}\big) \leq C(n)^{d^2}r^2\quad\text{for all }S_p\in\cS\cH_{n,d},\end{equation} where $\cS\cH_{n,d}=\{S_p=\Sigma_p\cap|Dp|^{-1}(0):\Sigma_p\in\cH_{n,d}, 0\in S_p\}$ denotes the collection of singular sets of nonconstant harmonic polynomials in $\RR^n$ of degree at most $d$ that include the origin. The latter estimate is a refinement of  \cite{cheegernabervaltorta}, which gave bounds on the volume of the $r$-neighborhood of the singular set of the form $C(n,d,\varepsilon)r^{2-\varepsilon}$ for all $\varepsilon>0$. The results of Cheeger, Naber, and Valtorta \cite{cheegernabervaltorta} and Naber and Valtorta \cite{nabervaltorta} apply to solutions of a class of second-order elliptic operators with Lipschitz coefficients; we refer the reader to the original papers for the precise class. Estimates \eqref{e:nv1} and \eqref{e:nv2} imply that the zero sets and the singular sets of harmonic polynomials have locally finite $(n-1)$ and $(n-2)$ dimensional Hausdorff measure, respectively. They transfer to the dimension estimates in Theorem \ref{t:main1} for sets that are locally bilaterally well approximated by $\cH_{n,d}$ using \cite{localsetapproximation}. See the proof of Theorem \ref{t:main1} in \S5 for details.

Although the singular set of a harmonic polynomial in $\RR^n$ generically has dimension at most $n-2$, additional topological restrictions on the zero set may lead to better bounds. In the plane, for example, the zero set of a homogeneous harmonic polynomial of degree $k$ is precisely the union of $k$ lines through the origin, arranged in an equiangular pattern. Hence $\RR^2\setminus \Sigma_p$ has precisely two connected components for $\Sigma_p\in\cF_{2,k}$ if and only if $k=1$, and consequently, the singular set is empty for any harmonic polynomial whose zero set separates $\RR^2$ into two connected components. When $n=3$, Lewy \cite{lewy} proved that if $\RR^3\setminus\Sigma_p$ has precisely two connected components for $\Sigma_p\in\cF_{3,k}$, then $k$ is necessarily odd. Moreover, Lewy proved the existence of $\Sigma_p\in\cF_{3,k}$ that separate $\RR^3$ into two connected components for all odd $k\geq 3$; an explicit example due to Szulkin \cite{szulkin} is $\Sigma_p\in\cF_{3,3}$, where $$p(x,y,z)=x^3-3xy^2+z^3-\tfrac{3}{2}(x^2+y^2)z.$$ Starting with $n=4$, zero sets of even degree homogeneous harmonic polynomials can also separate $\RR^n$ into two components, as shown e.g.~by Lemma \ref{l:example}, which we prove in \S6.

\begin{lemma}\label{l:example} Let $k\geq 2$, even or odd, and let $q:\RR^2\rightarrow\RR$ be a homogeneous harmonic polynomial of degree $k$. For any pair of constants $a,b\neq 0$, consider the  homogeneous harmonic polynomial $p:\RR^4\rightarrow\RR$ of degree $k$ given by
$$p(x_1,y_1,x_2,y_2)=a\, q(x_1,y_1)+b\,q(x_2,y_2).$$
The zero set $\Sigma_p$ of $p$ separates $\RR^4$ into two components.\end{lemma}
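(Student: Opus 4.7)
The plan is to show that $\RR^4\setminus\Sigma_p$ has exactly two connected components by proving that both $U^+:=\{p>0\}$ and $U^-:=\{p<0\}$ are nonempty and path-connected. Nonemptiness is immediate: setting $X_2=0$ reduces $p$ to $a\,q(X_1)$, which takes both signs since $q$ is a nonconstant homogeneous harmonic polynomial on $\RR^2$ (and hence changes sign) and $a\neq 0$. So the content is path-connectedness.

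First I would normalize to $a=b=1$. In polar coordinates $q$ has the form $A r^k\cos(k\theta-\phi_0)$, and the planar rotation by $\pi/k$ sends $q$ to $-q$. Applying, if needed, such a rotation in either $\RR^2$-factor converts $a$ or $b$ to a positive number, after which the dilations $(x,y)\mapsto|a|^{1/k}(x,y)$ and $(x,y)\mapsto|b|^{1/k}(x,y)$ in the respective factors absorb the magnitudes via the homogeneity $q(\lambda x,\lambda y)=\lambda^k q(x,y)$. These operations are homeomorphisms of $\RR^4$ carrying $\Sigma_p$ onto $\Sigma_{q_1+q_2}$, so it suffices to treat $p=q_1+q_2$, where $q_i:=q(X_i)$. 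Moreover, since $-q$ is again a homogeneous harmonic polynomial of degree $k$, the argument for $U^-$ will follow from that for $U^+$ applied to $-p=(-q_1)+(-q_2)$.

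The main obstacle is that $\{q>0\}\subset\RR^2$ has $k$ connected components (the ``positive sectors'' of $q$), so the obvious positive region $V:=\{q_1>0\}\times\{q_2>0\}\subset U^+$ already has $k^2$ components, all of which must be joined within $U^+$. I would do this by a ``reservoir'' construction: starting at $(X_1,X_2)\in V$, dilate the second factor $(X_1,X_2)\mapsto(X_1,\lambda X_2)$ along $\lambda\in[1,\Lambda]$, which keeps $p>0$ because $\lambda^k q(X_2)$ grows monotonically. Choose $\Lambda$ so that $\Lambda^k q(X_2)$ exceeds $\sup_t|q(\gamma(t))|$ for a prescribed bounded path $\gamma$ in $\RR^2$ from $X_1$ to any target $X_1'$ in a possibly different positive sector (with $\gamma$ allowed to cross $\Sigma_q$). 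Slide the first factor along $\gamma$ while holding the second at $\Lambda X_2$; then $p=q(\gamma(t))+\Lambda^k q(X_2)>0$ throughout. Finally, dilate the second factor back down; this keeps $p>0$ since both $q(X_1')>0$ and $q(X_2)>0$. Iterating this construction, together with its $1\leftrightarrow 2$ symmetric version, joins all $k^2$ components of $V$ inside one component of $U^+$.

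To finish, I would show that every point of $U^+$ can be path-connected into $V$. If $(X_1,X_2)\in U^+$ has, say, $q(X_1)>0$ (at least one summand must be positive) but $q(X_2)\leq 0$, dilate the first factor to $\lambda X_1$ so that $\lambda^k q(X_1)$ dominates, keeping $p>0$ monotonically; then slide $X_2$ along a bounded path into a positive sector of $q$; then dilate the first factor back. The degenerate cases with some $X_i=0$ or $q(X_i)=0$ are handled by first perturbing $X_i$ slightly into the interior of a positive sector, using continuity of $q$. This proves $U^+$ is path-connected, and the same argument applied to $-p$ gives $U^-$ path-connected, so $\RR^4\setminus\Sigma_p=U^+\sqcup U^-$ has exactly two components.
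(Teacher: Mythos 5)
Your proof is correct and follows essentially the same strategy as the paper's: reduce to $a=b=1$ and show $\{p>0\}$ and $\{p<0\}$ are path-connected by moving one planar factor at a time, using the homogeneity and sector structure of $q$ to keep the sign fixed along the path. The only difference is cosmetic: where the paper exhibits a piecewise linear path to a base point $(U,U)$ (described via Figure 6.1), you temporarily dilate the stationary factor so that its positive contribution dominates any bounded excursion of the moving factor, which yields an equally valid and fully self-contained construction.
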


Motivated by these examples, it is natural to ask whether it is possible to improve the dimension bounds on the singular set $A\setminus A_1=A_2\cup\dots\cup A_d$ in Theorem \ref{t:main1} under additional topological restrictions on $A$. In this direction, we prove the following result in \S6 below.

\begin{theorem}\label{t:main3} Let $n\geq 2$ and $d\geq 2$. Let $A\subseteq\RR^n$ be a closed set that is locally bilaterally well approximated by $\cH_{n,d}$. If $\RR^n\setminus A=\Omega^+\cup\Omega^-$ is a union of complimentary NTA domains $\Omega^+$ and $\Omega^-$, then \begin{enumerate}
\item[(i)]  $A\setminus A_1=A_2\cup\dots \cup A_{d}$ has upper Minkowski dimension at most $n-3$;
\item[(ii)] The ``even singular set" $A_2\cup A_4\cup A_6 \cup \cdots$ has Hausdorff dimension at most $n-4$.
\end{enumerate}\end{theorem}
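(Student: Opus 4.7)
The plan is to exploit the additional constraint that pseudotangents of $A$ must also arise from pairs of complementary NTA domains, and to combine this with the classification of separating homogeneous harmonic polynomials in low dimensions via Lewy's theorem and Lemma \ref{l:example}.

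\textbf{Step 1 (NTA separation passes to pseudotangents).} I would first show that if $\RR^n\setminus A=\Omega^+\cup\Omega^-$ is a union of complementary NTA domains and $T=\lim_{i\to\infty}(A-x_i)/t_i$ is any pseudotangent of $A$ at a point $x\in A$ (along a sequence $x_i\to x$ in $A$ and scales $t_i\downarrow 0$), then $\RR^n\setminus T$ is also a union of two complementary NTA domains. This follows from the scale-invariance of the NTA conditions together with the stability of NTA domains under Hausdorff limits, already exploited in the theory of Kenig and Toro \cite{kenigtorotwophase}. Combined with Theorem \ref{t:main1}, for $x\in A_k$ every pseudotangent is of the form $\Sigma_p$ with $p\in\cF_{n,k}$ and $\RR^n\setminus\Sigma_p$ equal to a union of two complementary NTA domains.

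\textbf{Step 2 (Spine bound for separating polynomials).} For a homogeneous harmonic polynomial $p$ on $\RR^n$, let $V(p)$ denote its spine—the largest linear subspace along which $p$ is translation-invariant—and write $p=q\circ\pi$, where $\pi\colon\RR^n\to V(p)^\perp$ is the orthogonal projection and $q$ is a homogeneous harmonic polynomial of the same degree on $V(p)^\perp\cong\RR^{m}$ with $m=n-\dim V(p)$. The zero set $\Sigma_p$ is a cylinder over $\Sigma_q$, so $\RR^n\setminus\Sigma_p$ is a union of two complementary NTA domains iff $\RR^m\setminus\Sigma_q$ is. Apply the low-dimensional classification: in $\RR^2$ only degree-one polynomials have this property; in $\RR^3$, by Lewy \cite{lewy}, only odd-degree polynomials do; by Lemma \ref{l:example} both parities are realized starting from $\RR^4$. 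Consequently, if $\Sigma_p$ is a pseudotangent of $A$ at a point $x\in A_k$ with $k\geq 2$, then $\dim V(p)\leq n-3$; if additionally $k$ is even, then $\dim V(p)\leq n-4$.

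\textbf{Step 3 (Dimension reduction).} Finally, I would feed the spine bound from Step 2 into Federer-type dimension reduction, together with the quantitative stratification machinery of Cheeger-Naber-Valtorta \cite{cheegernabervaltorta} and Naber-Valtorta \cite{nabervaltorta} used already in the proof of Theorem \ref{t:main1}. Since every pseudotangent of $A$ at a point of $A\setminus A_1$ has spine dimension at most $n-3$, this yields the upper Minkowski dimension bound (i); applied with the sharper bound $n-4$ for the pseudotangent spines at points of $A_2\cup A_4\cup\cdots$, it yields the Hausdorff dimension bound (ii). The main technical hurdle is Step 1: pseudotangents are taken along sequences of \emph{varying} base points $x_i\to x$, so the NTA constants must be controlled uniformly under centering at $x_i$ and rescaling by $t_i$, without passing through a monotonicity formula. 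A secondary subtlety is in Step 3, where the stratification argument must be carried out in the pseudotangent framework developed in \cite{localsetapproximation} rather than through unique blowups.
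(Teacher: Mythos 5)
Your Steps 1 and 2 do capture the paper's key geometric inputs: NTA separation persists under rescaled limits (Lemma \ref{closedunderblowup}, packaged as Lemma \ref{puttingthestarin}), and a separating homogeneous harmonic polynomial of degree $k\geq 2$ must genuinely depend on at least $3$ variables, and on at least $4$ when $k$ is even (the paper's Lemma \ref{l:subspace}, which also shows the relevant singular set \emph{is} the spine). But two of your assertions are false as stated, and they sit exactly where the quantitative work has to happen. First, at $x\in A_k$ it is only the \emph{tangent} sets that lie in $\cF_{n,k}$; pseudotangents taken along varying base points $x_i\to x$ can be zero sets of non-homogeneous polynomials, and indeed at a singular point of, say, Szulkin's cone a pseudotangent along nearby smooth points is a hyperplane, whose spine has dimension $n-1$. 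So ``every pseudotangent of $A$ at a point of $A\setminus A_1$ has spine dimension at most $n-3$'' is simply not true, and the statement that survives (a spine bound for tangent sets at points of $A_k$) is per-point, infinitesimal information.

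That is the second, more serious gap: per-point blowup information cannot by itself yield an \emph{upper Minkowski} bound as in (i); there is no Federer-type reduction for Minkowski dimension. What the paper actually proves is an effective statement about the model sets, not about blowups of $A$: Lemma \ref{criticalinsidestrata} shows, via a compactness argument (normalize, pass to coefficient limits, use Lemma \ref{convergenceofzerosets} and the closure property $\overline{\cH^*_{n,d}}\subseteq\cH^{**}_{n,d}$ to see the limit still separates, then contradict the two-variable impossibility), that at a singular point a separating polynomial is not $(n-2,\eta,r,x_0)$-symmetric for \emph{any} $r>0$ and small $\eta$; only then does the Cheeger--Naber--Valtorta estimate (Theorem \ref{cnvestimate}) give an $(n-3+\eta,C,1)$ covering profile for $\sing{\cH_{n,1}}{\overline{\cH^{**}_{n,d}}}$, which transfers to $A\setminus A_1$ through Theorem \ref{singapprox} and Theorem \ref{c:dim3}. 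Your proposal gestures at ``feeding the spine bound into the quantitative stratification machinery,'' but the bridge from a spine bound for exact homogeneous tangents to the almost-symmetry rigidity needed for the effective strata is precisely the missing argument. For (ii) you are closer in spirit, though the paper avoids dimension reduction altogether: since the $\cH_{n,k-1}$-singular part of a separating $k$-homogeneous polynomial is a linear subspace of dimension at most $n-4$ for even $k$, the class $\sing{\cH_{n,k-1}}{\overline{\cH^{**}_{n,k}}}$ has an $(n-4,C(n),1)$ covering profile, and the Hausdorff bound follows from unilateral approximation of the $\sigma$-compact set $A_k$ together with Theorem \ref{c:dim5}.
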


NTA domains, or \emph{non-tangentially accessible domains}, were introduced by Jerison and Kenig \cite{jerisonandkenig} to study the boundary behavior of harmonic functions in dimensions three and above. We defer their definition to \S6. However, let us mention in particular that NTA domains satisfy a quantitative strengthening of path connectedness called the \emph{Harnack chain condition}. This property guarantees that $A$ appearing in Theorem \ref{t:main3} may be locally bilaterally well approximated by zero sets $\Sigma_p$ of harmonic polynomials such that $\RR^n\setminus\Sigma_p$ has two connected components. Without the Harnack chain condition, this property may fail due to the following example by Logunov and Malinnikova \cite{lm-ratios}.

\begin{example} Consider the harmonic polynomial $p(x,y,z)=x^2-y^2+z^3-3x^2z$ from \cite[Example 5.1]{lm-ratios}. The authors of \cite{lm-ratios} show that $\RR^n\setminus\Sigma_p=\Omega^+\cup\Omega^-$ is the union of two domains, but remark that $\Omega^+$ and $\Omega^-$ fail the Harnack chain condition, and thus, $\Omega^+$ and $\Omega^-$ are not NTA domains (see Figure 1.1). Using Lemma \ref{convergenceofzerosets} below, it can be shown that $\Sigma_p$ has a unique tangent set at the origin (see Definition \ref{d:tangent} in the appendix), given by $\Sigma_q$, where $q(x,y,z)=x^2-y^2$. Note that $\Sigma_q$ divides $\RR^3$ into four components. However, if the set $\Sigma_p$ is locally bilaterally well approximated by some closed class $\cS\subseteq\cH_{n,d}$, then $\Sigma_q\in\cS$ by Theorem \ref{t:tangent-well} below.
\begin{figure}
\begin{center}\includegraphics[height=2in]{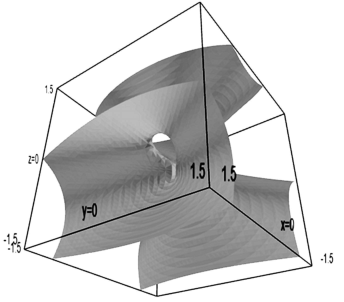}\hspace{.25in}\includegraphics[height=2in]{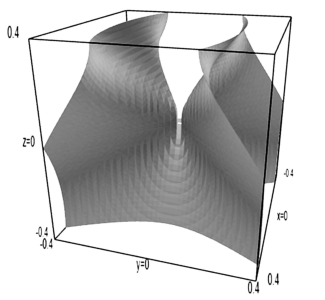}\end{center}
\caption{Select views of $\Sigma_p$, $p(x,y,z)=x^2-y^2+z^3-3x^2z$, which separates $\RR^3$ into two components \emph{and} has a cusp at the origin.}
\end{figure}
\end{example}

\begin{remark}\label{r:sharp} It can be shown that $\RR^n\setminus\Sigma_p=\Omega^+\cup\Omega^-$ is a union of complementary NTA domains and $\Sigma_p$ is smooth except at the origin when $p(x,y,z)$ is Szulkin's polynomial or when $p(x_1,y_1,x_2,y_2)$ is any polynomial from Lemma \ref{l:example}. Thus, the upper bounds given in Theorem \ref{t:main3} are generically the best possible. The reason that we obtain an upper Minkowski dimension bound on the full singular set $A\setminus A_1$, but only obtain a Hausdorff dimension bound on the ``even" singular set $A_2\cup A_4\cup\cdots$ is that the former is always closed when $A$ is closed, but we only know that the latter is $F_\sigma$ when $A$ is closed (see the proof of Theorem \ref{t:main3}). \end{remark}

The improved dimension bounds on $A\setminus A_1$ in Theorem \ref{t:main3} require a refinement of \eqref{e:nv2} for $\Sigma_p\in\cH_{n,d}$ that separate $\RR^n$ into complementary NTA domains, whose existence was postulated in \cite[Remark 9.5]{localsetapproximation}. Using the quantitative stratification machinery introduced in \cite{cheegernabervaltorta}, we demonstrate that near its singular points a zero set $\Sigma_p\in\cH_{n,d}$ with the separation property does not resemble $\Sigma_h\times\RR^{n-2}$ for any $\Sigma_h\in\cF_{2,k}$, $2\leq k\leq d$. This leads us to a version of \eqref{e:nv2} with right hand side $C(n,d,\varepsilon) r^{3-\varepsilon}$ for all $\varepsilon>0$ and thence to $\udim_M A\setminus A_1\leq n-3$ using \cite{localsetapproximation}. In addition, we show that at ``even degree" singular points, a zero set $\Sigma_p$ with the separation property, does not resemble $\Sigma_h\times\RR^{n-3}$ for any $\Sigma_h\in\cF_{3,2k}$, $2\leq 2k\leq d$. This leads us to the  bound $\dim_H \Gamma_2\cup\Gamma_4\cup\cdots \leq n-4$. See the proof of Theorem \ref{t:main3} in \S6 for details.

In the last section of the paper, \S7, we specialize Theorem \ref{t:main1} and Theorem \ref{t:main3} to the setting of two-phase free boundary problems for harmonic measure mentioned above, which motivated our investigation. This includes the case that $A=\partial\Omega$ is the boundary of a 2-sided NTA domain $\Omega\subset\RR^n$ whose interior harmonic measure $\omega^+$ and exterior harmonic measure $\omega^-$ are mutually absolutely continuous and have Radon-Nikodym derivative $f=d\omega^-/d\omega^+$ satisfying $\log f\in C(\partial\Omega)$ or $\log f\in \VMO(d\omega^+)$.

\begin{ack}  A portion of this research was completed while the second author was visiting the University of Washington during the spring of 2015. He thanks the Mathematics Department at UW for their hospitality. The first author acknowledges and thanks Stephen Lewis for many insightful conversations about local set approximation, which have duly influenced the present manuscript. The authors would like to thank an anonymous referee for his or her critical feedback, which has led to an improved exposition of these results.\end{ack}

\section{Relative size of the low order part of a polynomial}
\label{sect:relative}

Given a polynomial $p(x) =\sum_{|\alpha|\leq d} c_\alpha x^\alpha$ in $\RR^n$, define the \emph{height} $H(p)=\max_{|\alpha|\leq d}|c_\alpha|$, i.e.\ the height of $p$ is the maximum in absolute value of the coefficients of $p$. The following lemma is an instance of the equivalence of norms on finite-dimensional vector spaces.

\begin{lemma}\label{l:height} $H(p)\approx\|p\|_{L^\infty(B(0,1))}$ for every polynomial $p:\RR^n\rightarrow\RR$ of degree at most $d$, where the implicit constants depend only on $n$ and $d$.\end{lemma}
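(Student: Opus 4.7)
The plan is to observe that both $H(\cdot)$ and $\|\cdot\|_{L^\infty(B(0,1))}$ are norms on the finite-dimensional vector space $\cP_{n,d}$ of real polynomials on $\RR^n$ of degree at most $d$, and to invoke equivalence of norms. This will give the stated comparison with constants depending only on $n$ and $d$, which are the two parameters that fix the dimension of $\cP_{n,d}$.

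First I would verify that both are genuinely norms. Homogeneity and the triangle inequality are immediate for each. Nondegeneracy of $H$ is trivial. Nondegeneracy of $\|\cdot\|_{L^\infty(B(0,1))}$ requires the fact that a polynomial vanishing identically on the open ball $B(0,1)$ must be the zero polynomial, which is classical (induction on $n$, or real-analyticity).

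For the upper bound $\|p\|_{L^\infty(B(0,1))} \le C(n,d)\,H(p)$, the argument is direct: on $B(0,1)$ one has $|x^\alpha| \le 1$, so writing $p=\sum_{|\alpha|\le d} c_\alpha x^\alpha$ and summing,
\[
\|p\|_{L^\infty(B(0,1))} \le \sum_{|\alpha|\le d}|c_\alpha| \le N(n,d)\,H(p),\qquad N(n,d)=\tbinom{n+d}{d}.
\]
For the lower bound $H(p) \le C(n,d)\,\|p\|_{L^\infty(B(0,1))}$, which is the substantive direction, I would argue by compactness. Identifying $\cP_{n,d}$ with $\RR^{N(n,d)}$ via the coefficient map, the set $S=\{p\in\cP_{n,d}:H(p)=1\}$ is compact and avoids the zero polynomial. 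The assignment $p\mapsto \|p\|_{L^\infty(B(0,1))}$ is continuous in the coefficient topology (indeed controlled above by $H$ as just shown), and by nondegeneracy it is strictly positive on $S$, so it attains a strictly positive minimum $c(n,d)>0$ on $S$. Homogeneity of both norms then upgrades this to $H(p)\le c(n,d)^{-1}\|p\|_{L^\infty(B(0,1))}$ for all $p\in\cP_{n,d}$.

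There is no real obstacle here; the lemma is a soft, elementary restatement of the finite-dimensionality of $\cP_{n,d}$. The only nontrivial input is the uniqueness statement for polynomials vanishing on an open set, and this is standard. The lemma will be used as a convenient translation device in \S3, so that \L ojasiewicz-type estimates on harmonic polynomials $p$ can be phrased interchangeably in terms of coefficient size and $L^\infty$ size on balls.
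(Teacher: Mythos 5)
Your proposal is correct and rests on the same underlying mechanism as the paper's proof: the trivial bound $\|p\|_{L^\infty(B(0,1))}\lesssim_{n,d}H(p)$ plus a compactness argument in the finite-dimensional coefficient space, using that a polynomial vanishing on $B(0,1)$ is identically zero. The paper packages the compactness step as a normal-families argument by contradiction (normalizing a hypothetical bad sequence and extracting a coefficient-convergent subsequence), whereas you take the positive minimum of the $L^\infty$ norm on the height-unit sphere; these are interchangeable presentations of the same equivalence-of-norms fact.
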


Below we will need the following easy consequence of Lemma \ref{l:height}.

\begin{corollary}\label{c:height} If $p\equiv p_d +\dots+ p_0$, where each $p_i:\RR^n\rightarrow\RR$ is zero or a homogeneous polynomial of degree $i$, then $\|p\|_{L^\infty(B(0,1))}\approx \sum_{i=0}^d H(p_i)$, where the implicit constants depend only on $n$ and $d$. \end{corollary}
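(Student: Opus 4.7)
The plan is to peel the equivalence into two inequalities. The upper bound $\|p\|_{L^\infty(B(0,1))} \lesssim \sum_{i=0}^d H(p_i)$ falls out of the triangle inequality combined with the easy direction of Lemma \ref{l:height} applied to each $p_i$, so the real content lies in the reverse inequality $\sum_{i=0}^d H(p_i) \lesssim \|p\|_{L^\infty(B(0,1))}$.

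For that direction, I would read the statement in the natural Taylor-expansion way, taking each $p_i$ to be the degree-$i$ homogeneous part of $p$; this is the only interpretation under which the conclusion can hold, since otherwise arbitrary cancellations among the $p_i$ would defeat any lower bound on $\sum H(p_i)$ in terms of $\|p\|_{L^\infty(B(0,1))}$. Then I would recover each $p_i$ from $p$ by a Vandermonde interpolation: for any $x \in \RR^n$ the one-variable polynomial $r \mapsto p(rx) = \sum_{i=0}^d r^i p_i(x)$ has degree $d$, so evaluating at $d+1$ distinct radii $r_0,\dots,r_d \in (0,1]$ and inverting the Vandermonde matrix expresses each $p_i(x)$ as a universal linear combination of the values $p(r_0 x),\dots,p(r_d x)$. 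This immediately yields $\|p_i\|_{L^\infty(B(0,1))} \leq C(n,d)\,\|p\|_{L^\infty(B(0,1))}$, and then the hard direction of Lemma \ref{l:height} upgrades this to $H(p_i) \leq C'(n,d)\,\|p\|_{L^\infty(B(0,1))}$. Summing over $i$ from $0$ to $d$ finishes the job.

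If one prefers a proof in the same stylistic register as Lemma \ref{l:height}, one can instead run a normal families argument: suppose no such constant exists, produce a sequence $p^k = \sum_i p_i^k$ with $\sum_i H(p_i^k) = 1$ but $\|p^k\|_{L^\infty(B(0,1))} \to 0$, pass to a subsequential limit $p_i^k \to p_i$ for each $i$ (by compactness in the finite-dimensional space of degree-$i$ homogeneous polynomials), and derive a contradiction from the uniqueness of the homogeneous decomposition applied to the limit $p \equiv 0$. Either route is short; the only real step beyond bookkeeping is invoking that uniqueness of the homogeneous decomposition, which is where I expect the only (mild) obstacle to lie.
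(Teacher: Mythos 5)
Your proof is correct, and you were right to insist on reading the $p_i$ as the homogeneous parts of $p$: the paper uses the corollary only in that setting (for the Taylor pieces $p^{(x)}_i$), and as you note the statement fails for arbitrary decompositions because of cancellation. Where you diverge from the paper is in the hard direction. The paper's argument is a one-liner: since each $p_i$ is the degree-$i$ homogeneous part, its coefficients are a subset of the coefficients of $p$, so $H(p_i)\leq H(p)$ for every $i$, hence $\sum_{i=0}^d H(p_i)\leq (d+1)H(p)\lesssim_{n,d}\|p\|_{L^\infty(B(0,1))}$ by a single application of Lemma \ref{l:height}. Your Vandermonde route instead fixes $x$ and inverts the system $p(r_jx)=\sum_i r_j^i p_i(x)$ at $d+1$ radii in $(0,1]$, which yields the intermediate estimate $\|p_i\|_{L^\infty(B(0,1))}\leq C(d)\,\|p\|_{L^\infty(B(0,1))}$ before converting to heights via Lemma \ref{l:height}; your normal-families variant also works, granted that $H$ is a norm on the finite-dimensional space of polynomials of degree at most $d$ and that the homogeneous decomposition is unique. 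The trade-off: the paper's proof is shorter because the height functional already separates monomials by degree, so no interpolation is needed; your argument is slightly longer but proves, in passing, the coefficient-free bound that each homogeneous piece is controlled in sup norm on the unit ball by $p$ itself, which is a genuinely useful fact in its own right and does not route through Lemma \ref{l:height} for that step.
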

\begin{proof} On one hand, $$\|p\|_{L^\infty(B(0,1))} \leq \sum_{i=0}^d \|p_i\|_{L^\infty(B(0,1))} \lesssim \sum_{i=0}^d H(p_i)$$ by Lemma \ref{l:height} (applied $d+1$ times). On the other hand, the assumption that each $p_i$ is zero or homogeneous of degree $i$ ensures that $H(p)=\max_i H(p_i)$. Hence $$\sum_{i=0}^d H(p_i) \leq (d+1) H(p) \lesssim \|p\|_{L^\infty(B(0,1))}$$ by Lemma \ref{l:height}, again. \end{proof}

By Taylor's theorem, for any polynomial $p:\RR^n\rightarrow\RR$ of degree $d\geq 1$ and for any $x\in\RR^n$, we can write \begin{equation} p(x+y) = p^{(x)}_d(y) + p^{(x)}_{d-1}(y)+\dots + p^{(x)}_0(y)\quad\text{for all } y\in\RR^n,\end{equation} where each term $p^{(x)}_i:\RR^n\rightarrow\RR$ is an $i$-homogeneous polynomial, i.e.~ \begin{equation} p^{(x)}_i(ry)=r^i p^{(x)}_i(y)\quad\text{for all } y\in\RR^n \text{ and } r>0.\end{equation}

\begin{definition}\label{modifiedzeta}
Let $p:\RR^n\rightarrow\RR$ be a polynomial of degree $d\geq 1$ and let $x\in\RR^n$. For all $0\leq k<d$ and $r>0$, define $\widehat{\zeta}_k(p,x,r)$ by \begin{equation*}\label{eq:modifiedzeta} \widehat{\zeta}_k(p,x, r) =\max_{k<j\leq d} \frac{\left\|p^{(x)}_j\right\|_{L^\infty(B(0,r))}}{\left\|\sum_{i=0}^k p^{(x)}_i\right\|_{L^\infty(B(0,r))}}\in[0,\infty].
\end{equation*}
\end{definition}

\begin{remark} The function $\widehat{\zeta}_k(p,x,r)$ is a variant of the function $\zeta_{k}(p,x,r)$ appearing in \cite[Definition 2.1]{badgerflatpoints} and defined by $$\zeta_k(p,x,r) = \max_{j\neq k} \frac{\left\|p^{(x)}_j\right\|_{L^\infty(B(0,r))}}{\left\|p^{(x)}_k\right\|_{L^\infty(B(0,r))}}.$$ The latter measured the relative size of the degree $k$ part of a polynomial compared to its parts of degree $j\neq k$, while the former measures the relative size of the low order part of a polynomial, consisting of all terms of degree at most $k$, compared to its parts of degree $j>k$. We note that $\widehat{\zeta}_1(p,x,r)$ and $\zeta_1(p,x,r)$ coincide whenever $x\in\Sigma_p$, the zero set of $p$.
\end{remark}

The next lemma generalizes \cite[Lemma 2.10]{badgerflatpoints}, which stated $\zeta_1(p,x,sr) \leq s\zeta_1(p,x,r)$ for all $s\in(0,1)$, for all polynomials $p:\RR^n\rightarrow\RR$, for all $x\in \Sigma_p$, and for all $r>0$.

\begin{lemma}[change of scales lemma] \label{l:quasilinear} For all polynomials $p:\RR^n\rightarrow\RR$ of degree $d\geq 1$, for all $0\leq k<d$, for all $x\in\RR^n$ and for all $r>0$, $$s^d\,\widehat{\zeta}_k(p,x,r)\lesssim \widehat{\zeta}_k(p,x,sr)\lesssim s\, \widehat{\zeta}_k(p,x,r)\quad \text{for all }s\in(0,1),$$ where the implicit constants depends only on $n$ and $d$.\end{lemma}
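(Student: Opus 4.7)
The proof is a pure scaling argument, once one understands how the numerator and denominator of each ratio in the definition of $\widehat{\zeta}_k$ transform under $r \mapsto sr$. The numerator is the easy part: each $p^{(x)}_j$ is $j$-homogeneous, so
\[
\|p^{(x)}_j\|_{L^\infty(B(0,sr))} = s^{j}\,\|p^{(x)}_j\|_{L^\infty(B(0,r))}.
\]
The delicate object is the denominator $q \colonequals \sum_{i=0}^{k} p^{(x)}_i$, which is a sum of homogeneous polynomials of differing degrees and therefore does not obey a single-power scaling law.

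The plan is to use Corollary \ref{c:height} as the crucial input to tame $q$. Rescaling $y \mapsto y/\rho$ transfers Corollary \ref{c:height} from $B(0,1)$ to an arbitrary ball $B(0,\rho)$, yielding
\[
\|q\|_{L^\infty(B(0,\rho))} \;\approx\; \sum_{i=0}^{k} \|p^{(x)}_i\|_{L^\infty(B(0,\rho))} \;=\; \sum_{i=0}^{k} \rho^{i}\,\|p^{(x)}_i\|_{L^\infty(B(0,1))},
\]
with implicit constants depending only on $n$ and $d$. Applying this at $\rho = r$ and $\rho = sr$ and using the elementary inequalities $s^{k} \le s^{i} \le 1$ for $0 \le i \le k$ and $s \in (0,1)$ immediately gives the two-sided comparison
\[
s^{k}\,\|q\|_{L^\infty(B(0,r))} \;\lesssim\; \|q\|_{L^\infty(B(0,sr))} \;\lesssim\; \|q\|_{L^\infty(B(0,r))}.
\]

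Combining the homogeneity of the numerator with the above comparison on the denominator, for each $k < j \le d$ we obtain the one-sided ratio estimates
\[
\frac{\|p^{(x)}_j\|_{L^\infty(B(0,sr))}}{\|q\|_{L^\infty(B(0,sr))}} \;\lesssim\; s^{\,j-k}\,\frac{\|p^{(x)}_j\|_{L^\infty(B(0,r))}}{\|q\|_{L^\infty(B(0,r))}}, \qquad \frac{\|p^{(x)}_j\|_{L^\infty(B(0,sr))}}{\|q\|_{L^\infty(B(0,sr))}} \;\gtrsim\; s^{\,j}\,\frac{\|p^{(x)}_j\|_{L^\infty(B(0,r))}}{\|q\|_{L^\infty(B(0,r))}}.
\]
Since $1 \le j-k$ and $j \le d$, we have $s^{\,j-k} \le s$ and $s^{\,j} \ge s^{d}$, so taking the maximum over $k < j \le d$ on both sides delivers precisely the asserted inequalities $s^{d}\,\widehat{\zeta}_k(p,x,r) \lesssim \widehat{\zeta}_k(p,x,sr) \lesssim s\,\widehat{\zeta}_k(p,x,r)$. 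The degenerate cases in which $q \equiv 0$ or all of $p^{(x)}_{k+1},\dots,p^{(x)}_d$ vanish make $\widehat{\zeta}_k(p,x,\cdot)$ identically $0$, identically $+\infty$, or undefined, so there the inequalities hold trivially.

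The only genuine obstacle in the argument is selecting the right tool to control the non-homogeneous denominator; Corollary \ref{c:height} (itself a normal-families consequence of Lemma \ref{l:height}) is tailored precisely for this, and once it is in hand the remainder of the proof is routine power-of-$s$ book-keeping.
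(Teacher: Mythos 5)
Your proof is correct and takes essentially the same approach as the paper's: both exploit Corollary \ref{c:height} (via Lemma \ref{l:height}) together with the $i$-homogeneity of the pieces to obtain $s^{k}\,\|\tilde p\|_{L^\infty(B(0,r))}\lesssim \|\tilde p\|_{L^\infty(B(0,sr))}\lesssim \|\tilde p\|_{L^\infty(B(0,r))}$ for the low-order part $\tilde p=\sum_{i=0}^{k}p^{(x)}_i$, and then conclude from the exact scaling of the numerators with the elementary bounds $s^{j-k}\leq s$ and $s^{j}\geq s^{d}$. Your rescaled-ball version of the corollary is just a repackaging of the paper's height computation, so there is nothing substantively different to compare.
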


\begin{proof} Let $p:\RR^n\rightarrow\RR$ be a polynomial of degree $d\geq 1$, let $x\in\RR^n$, and let $0\leq k<d$. Write $\tilde p=p^{(x)}_k+\dots + p^{(x)}_0$ for the low order part of $p$ at $x$. Then, by repeated use of Corollary \ref{c:height} and the $i$-homogenity of each $p^{(x)}_i$, we have that for all $r>0$ and $s\in(0,1)$, \begin{equation}\begin{split} \label{e:polyshrink} \|\tilde p\|_{L^\infty(B(0,sr))} &= \left\|\sum_{i=0}^k p_i^{(x)}(sr\cdot)\right\|_{L^\infty(B(0,1))}
\gtrsim \sum_{i=0}^k H(p_i^{(x)}(sr\cdot))\gtrsim \sum_{i=0}^k s^i H(p^{(x)}_i(r\cdot))\\ &\gtrsim s^k \sum_{i=0}^k H(p^{(x)}(r\cdot)) \gtrsim s^k \left\|\sum_{i=0}^k p^{(x)}(r\cdot)\right\|_{L^\infty(B(0,1))} \gtrsim s^k \|\tilde p\|_{L^\infty(B(0,r))},\end{split}\end{equation} where the implicit constants depend on only $n$ and $k$. It immediately follows that $$\widehat{\zeta}_k(p,x,sr) = \max_{k<j\leq d} \frac{\left\|p^{(x)}_j\right\|_{L^\infty(B(0,sr))}}{\left\|\tilde p\right\|_{L^\infty(B(0,sr))}} \lesssim \max_{k<j\leq d} s^{j-k}\,\frac{\left\|p^{(x)}_j\right\|_{L^\infty(B(0,r))}}{\left\|\tilde p\right\|_{L^\infty(B(0,r))}}\lesssim s\, \widehat{\zeta}_k(p,x,r),
$$ where the implied constant depends only on $n$ and $k$, and therefore, may be chosen to only depend on $n$ and $d$. The other inequality follows similarly and is left to the reader.
\end{proof}

We end with a statement about the joint continuity of $\widehat{\zeta}_k(p,x,r)$. Lemma \ref{zetacts} follow from elementary considerations; for some sample details, the reader may consult the proof of an analogous statement for $\zeta_k(p,x,r)$ in \cite[Lemma 2.8]{badgerflatpoints}.

\begin{definition} \label{d:coeff} A sequence of polynomials $(p^i)_{i=1}^\infty$ in $\RR^n$ converges \emph{in coefficients} to a polynomial $p$ in $\RR^n$ if $d=\max_i \deg p^i<\infty$ and $H(p-p^i)\rightarrow 0$ as $i\rightarrow\infty$.\end{definition}

\begin{lemma}\label{zetacts} For every $k\geq 0$, $\widehat{\zeta}_k(p,x,r)$ is jointly continuous in $p$, $x$, and $r$. That is, \begin{equation*} \widehat{\zeta}_k(p^i,x_i,r_i)\rightarrow \widehat{\zeta}_k(p,x,r)\end{equation*} whenever $\deg p>k$, $p^i\rightarrow p$ in coefficients, $x_i\rightarrow x\in\RR^n$, and $r_i\rightarrow r\in(0,\infty)$.\end{lemma}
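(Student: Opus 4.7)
The plan is to show that both the numerator and the denominator defining $\widehat\zeta_k$ are jointly continuous in $(p,x,r)$, and then pass to the quotient in $[0,\infty]$ to accommodate the possibility that the denominator vanishes in the limit. Throughout I assume $p^i\to p$ in coefficients, $x_i\to x$ in $\RR^n$, and $r_i\to r\in(0,\infty)$, and I set $d=\max_i \deg p^i<\infty$, so that $\deg p\leq d$ as well.

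First I would verify that $(p,x)\mapsto p^{(x)}_j$ is jointly continuous. Taylor's formula gives
\[
p^{(x)}_j(y)=\sum_{|\alpha|=j}\frac{\partial^{\alpha} p(x)}{\alpha!}\,y^{\alpha},
\]
so each coefficient of $p^{(x)}_j$ (as a polynomial in $y$) is a polynomial expression in the coefficients of $p$ and in the entries of $x$. Hence $p^{(x_i)}_j\to p^{(x)}_j$ in coefficients for every $0\leq j\leq d$, and by Remark \ref{r:coeff} this is equivalent to uniform convergence on compact subsets of $\RR^n$.

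Next I would prove that $(q,r)\mapsto\|q\|_{L^\infty(B(0,r))}$ is continuous on the space of polynomials of degree at most $d$ (in the coefficient topology of Definition \ref{d:coeff}) times $(0,\infty)$. Fix any $R>r$; then $r_i<R$ for all $i$ sufficiently large, and
\[
\bigl|\,\|q^i\|_{L^\infty(B(0,r_i))}-\|q\|_{L^\infty(B(0,r))}\bigr|
\leq \|q^i-q\|_{L^\infty(B(0,R))}+\bigl|\,\|q\|_{L^\infty(B(0,r_i))}-\|q\|_{L^\infty(B(0,r))}\bigr|.
\]
The first term tends to $0$ by Remark \ref{r:coeff} applied to $q^i-q$ on the compact ball $\overline{B(0,R)}$, while the second tends to $0$ because $q$ is uniformly continuous on $\overline{B(0,R)}$ and $r_i\to r$. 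Applying this with $q^i=p^{(x_i)}_j$ (for each fixed $k<j\leq d$) and with $q^i=\sum_{j=0}^{k}p^{(x_i)}_j$, and combining with the previous paragraph, yields $N_i\to N$ and $D_i\to D$, where $N_i,D_i$ denote the numerator and denominator of $\widehat\zeta_k(p^i,x_i,r_i)$ and $N,D$ the corresponding quantities for $\widehat\zeta_k(p,x,r)$.

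The only delicate point is passing to the quotient in $[0,\infty]$, and this is where the hypothesis $\deg p>k$ enters. The top-degree homogeneous part of $p$ is nonzero and is unchanged by translation, so it coincides with $p^{(x)}_{\deg p}$; hence $N\geq \|p^{(x)}_{\deg p}\|_{L^\infty(B(0,r))}>0$. If $D>0$, then $D_i\to D>0$ forces $N_i/D_i\to N/D$ in $[0,\infty)$. If $D=0$, then for any $M>0$ one has eventually $N_i>N/2$ and $D_i<N/(2M)$, so $N_i/D_i>M$, giving $\widehat\zeta_k(p^i,x_i,r_i)\to\infty=\widehat\zeta_k(p,x,r)$ in the one-point compactification of $[0,\infty)$. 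This handles the only real obstacle, namely the bookkeeping at the $\infty$-boundary; everything else reduces to standard continuity of polynomial evaluation and of sup-norms on balls.
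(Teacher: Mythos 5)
Your proof is correct, and it is exactly the elementary route the paper has in mind: the paper gives no detailed argument for Lemma \ref{zetacts}, deferring to ``elementary considerations'' and the analogous \cite[Lemma 2.8]{badgerflatpoints}, which amount to the same steps you carry out (joint continuity of $(p,x)\mapsto p^{(x)}_j$ in coefficients, continuity of $(q,r)\mapsto\|q\|_{L^\infty(B(0,r))}$, and passage to the quotient in $[0,\infty]$ using that $\deg p>k$ forces the numerator to have a positive limit). Your handling of the degenerate case $D=0$, where the value is $\infty$, is the right bookkeeping and matches the convention used later in Lemma \ref{thetacontrolledbyzeta}.
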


\section{Growth estimates for harmonic polynomials}
\label{sect:growth}

We need several estimates on the growth of nonconstant harmonic polynomials of degree at most $k$. The main result of this section is the following uniform \L ojasiewicz inequality for harmonic polynomials of bounded degree.

\begin{theorem}[\L ojasiewicz inequality for harmonic polynomials]\label{lojasiewiczinequality}
For all $n\geq 2$ and $k \geq 1$, there exists a constant $c=c(n,k)> 0$  with the following property. If $p: \RR^n \rightarrow \RR$ is a nonconstant harmonic polynomial of degree at most $k$ and $x_0\in\Sigma_p$, then 
\begin{equation}\label{eqn:lojasiewiczinequality}
|p(z)| \geq c\|p\|_{L^\infty(B(x_0,1))}\mathrm{dist}(z,\Sigma_p)^k\quad\text{for all }z\in B(x_0,1/2).
\end{equation}
\end{theorem}

\begin{remark}\L ojasiewicz \cite{lojasiewicz} proved the remarkable result that if $f$ is a real analytic function on $\RR^n$ and $x_0\in \Sigma_f$ (the zero set of $f$), then there exist constants $C,\varepsilon,m>0$ such that $$|f(z)|\geq C\dist(z,\Sigma_f)^m\quad\text{for all }z\in B(x_0,\varepsilon).$$ The smallest possible $m$ is called the \emph{\L ojasiewicz exponent} of $f$ at $x_0$. It is perhaps a surprising fact that the \L ojasiewicz exponent of a polynomial can exceed the degree of the polynomial. Bounding the \L ojasiewicz exponent from above is a difficult problem in algebraic geometric; see e.g.~\cite{kollar}, \cite{so'n}. The content of Theorem \ref{lojasiewiczinequality} over the general form of the \L ojasiewicz inequality is the tight bound on the \L ojasiewicz exponent and uniformity of the constant $c$ in \eqref{eqn:lojasiewiczinequality} across all harmonic polynomials of bounded degree.
\end{remark}

The key tools that we use in this section are Almgren's frequency formula and Harnack's inequality for positive harmonic functions. Let us now recall the definition of the former.

\begin{definition}\label{almgrenfrequency}
 Let $f \in H^1_{\mathrm{loc}}(\mathbb R^n)$ and let $x_0\in \Sigma_f=\{x\in\RR^n:f(x)=0\}$. For all $r>0$, define the quantities $H(r,x_0,f)$ and $D(r,x_0,f)$ by $$
 H(r,x_0,f) = \int_{\partial B(x_0,r)} f^2\,d\sigma\quad\text{and}\quad D(r,x_0,f) = \int_{B(x_0,r)} |\nabla f|^2\,dx.$$ Then the \emph{frequency function} $N(r,x_0,f)$ is defined by
 $$N(r,x_0,f) = \frac{rD(r,x_0,f)}{H(r,x_0,f)}\quad\text{for all }r>0.$$
\end{definition}

Almgren introduced the frequency function in \cite{almgren}. It is a simple matter to show that for any harmonic polynomial $p$, the frequency function $N(r,x_0,p)\equiv \deg p$. When $f$ is any harmonic function, not necessarily a polynomial, Almgren proved that $N(r, x_0, f)$ is absolutely continuous in $r$ and monotonically decreasing as $r\downarrow 0$, and moreover, $\lim_{r\downarrow 0} N(r,x_0,f)$ is the order to which $f$ vanishes at $x_0$. It can also be verified that \begin{equation}\label{logderofH}
\frac{d}{dr} \log\left(\frac{H(r, x_0, f)}{r^{n-1}}\right) = 2\frac{N(r, x_0, f)}{r}.
\end{equation}
Integrating \eqref{logderofH} and invoking the monotonicity of $N(r,x_0,f)$ in $r$, one can prove the following doubling property. For a proof of Lemma \ref{l:doublinglemma}, see e.g.~ \cite[Corollary 1.5]{hannotes}; the result is stated there with $x_0 = 0$ and $R =1$, but the general case readily follows by observing that $N(R, x_0, f) = N(1, 0, g)$, where $g(x)=f(x_0+Rx)/R$.

\begin{lemma} \label{l:doublinglemma}
If $f$ is a harmonic function on $B(x_0,R)$, then for all $r\in(0,R/2)$,
\begin{equation}\label{doublingequation}
\fint_{B(x_0,2r)} f^2\, dx \leq 2^{2N(R, x_0, f)\, -\, 1} \fint_{B(x_0,r)} f^2\,dx.
\end{equation}
\end{lemma}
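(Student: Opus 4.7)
The plan is to turn the differential identity \eqref{logderofH} into a power-type growth estimate on $H(s, x_0, f)$ by exploiting Almgren's monotonicity of $N$, and then integrate this in polar coordinates to pass from a spherical bound to a bound on volume averages. As noted after the statement, by replacing $f$ with $g(y) := f(x_0 + Ry)/R$ (harmonic on $B(0,1)$ with $N(1, 0, g) = N(R, x_0, f)$), it suffices to handle the case $x_0 = 0$, $R = 1$.

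First I would record the key monotonicity: since $s \mapsto N(s, 0, f)$ is non-decreasing, $N(s, 0, f) \leq N(1, 0, f)$ for every $s \in (0, 1]$. Integrating \eqref{logderofH} from $t$ to $2t$ on any subinterval with $2t \leq 1$ then gives
\[
\log \frac{H(2t, 0, f)/(2t)^{n-1}}{H(t, 0, f)/t^{n-1}} \;=\; \int_t^{2t} \frac{2 N(s, 0, f)}{s}\, ds \;\leq\; 2 N(1, 0, f)\, \log 2,
\]
so $H(2t, 0, f) \leq 2^{\,n - 1 + 2 N(1, 0, f)}\, H(t, 0, f)$ whenever $2t \leq 1$.

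Next I would pass from spheres to balls via the identity $\int_{B(0, \rho)} f^2\, dx = \int_0^\rho H(s, 0, f)\, ds$ (Fubini in polar coordinates). Making the substitution $s = 2t$ and invoking the previous step,
\[
\int_{B(0, 2r)} f^2\, dx \;=\; 2 \int_0^r H(2t, 0, f)\, dt \;\leq\; 2^{\,n + 2 N(1, 0, f)} \int_0^r H(t, 0, f)\, dt \;=\; 2^{\,n + 2 N(1, 0, f)} \int_{B(0, r)} f^2\, dx.
\]
Dividing through by $|B(0, 2r)| = 2^n |B(0, r)|$ and translating the estimate back to $B(x_0, R)$ yields the doubling inequality. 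To sharpen the exponent from $2 N(R, x_0, f)$ to $2 N(R, x_0, f) - 1$ as stated, I would split $[0, 2r]$ into $[0, r]$ and $[r, 2r]$ and estimate each piece separately: on $[r, 2r]$ use the integrated form of \eqref{logderofH} with $N(s) \leq N(R, x_0, f)$ to bound $H(s)$ by $H(r)(s/r)^{n - 1 + 2N(R, x_0, f)}$, and on $[0, r]$ use only the monotonicity of $H(s)/s^{n-1}$ together with $\int_0^r s^{n-1}\, ds = r^n/n$ to lower-bound the denominator.

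I do not expect any serious obstacle; the argument is a direct computation powered by Almgren's monotonicity (which is quoted rather than reproved). The only genuinely delicate point is the bookkeeping needed to extract precisely $2 N(R, x_0, f) - 1$ rather than the cleaner $2 N(R, x_0, f)$ produced by a single change of variables on $[0, 2r]$; this is the one place where a more attentive splitting of the radial integral is required.
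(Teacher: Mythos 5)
The paper does not actually prove this lemma: it quotes \cite[Corollary 1.5]{hannotes} and records only the scaling reduction to $x_0=0$, $R=1$, so your self-contained argument is a genuinely different route, and its core is correct. Integrating \eqref{logderofH} over $[t,2t]$ and using Almgren's monotonicity $N(s,0,f)\le N(1,0,f)$ gives $H(2t,0,f)\le 2^{\,n-1+2N(1,0,f)}H(t,0,f)$ for $2t\le 1$, and the polar-coordinate identity $\int_{B(0,\rho)}f^2\,dx=\int_0^\rho H(s,0,f)\,ds$ then yields $\fint_{B(0,2r)}f^2\le 2^{2N(1,0,f)}\fint_{B(0,r)}f^2$, which rescales to the doubling bound with constant $2^{2N(R,x_0,f)}$.

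The gap is the final ``sharpening'' to the stated exponent $2N(R,x_0,f)-1$, and it is twofold. First, the mechanism you propose does not run: monotonicity of $s\mapsto H(s)/s^{n-1}$ gives, for $s\le r$, the \emph{upper} bound $H(s)\le H(r)(s/r)^{n-1}$, so it bounds $\int_0^r H(s)\,ds$ from above rather than from below; to bound it from below you must use the frequency again, $H(s)\ge H(r)(s/r)^{n-1+2N}$ with $N=N(R,x_0,f)$, and feeding that into the split $[0,r]\cup[r,2r]$ reproduces exactly the constant $2^{2N}$, not $2^{2N-1}$. Second, and decisively, no bookkeeping can do better, because $2^{2N}$ is sharp: if $f$ is a homogeneous harmonic polynomial of degree $k$ (already $f(x)=x_1$, $k=1$, suffices), then $N(s,x_0,f)\equiv k$ at $x_0=0$ and $\fint_{B(0,\rho)}f^2=c_{n,k}\,\rho^{2k}$, so the doubling ratio equals $2^{2k}=2^{2N}$ and \eqref{doublingequation} with exponent $2N-1$ fails. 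In other words, the ``$-1$'' in the displayed statement is a slip (presumably inherited from a differently normalized statement in the cited source), and the bound you should state and prove is $\fint_{B(x_0,2r)}f^2\le 2^{2N(R,x_0,f)}\fint_{B(x_0,r)}f^2$. This weaker constant is all that is ever used: in Lemma \ref{supcomparabletoaverage} and Lemma \ref{growthofthesup} only the order $2^{cN}$ with $N\le\deg p\le k$ (Lemma \ref{frequencyofpolynomials}) matters, and the proof of Lemma \ref{growthofthesup} in fact discards the $-1$ immediately. So keep your main computation, drop the refinement, and adjust the exponent.
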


\begin{corollary}\label{supcomparabletoaverage} For all $n\geq 2$ and $k\geq 1$, there exists a constant $C>0$ such that if $p:\RR^n\rightarrow\RR$ is a harmonic polynomial of degree at most $k$, $x_0\in\RR^n$, and $r>0$, then \begin{equation}\label{eq:supcomparabletoaverage} \fint_{B(x_0,2r)} p^2\,dx\leq C \fint_{B(x_0,r)} p^2\,dx\quad\text{and}\quad\sup_{B(x_0, r)}p^2 \leq 2^n C \fint_{B(x_0,r)} p^2\,dx.\end{equation}
\end{corollary}

\begin{proof} The first inequality in \eqref{eq:supcomparabletoaverage} is an immediate consequence of Lemma \ref{l:doublinglemma} and the well-known fact that $N(r,x_0,p)\equiv \deg p$ for every harmonic polynomial $p$.

To establish the second inequality in \eqref{eq:supcomparabletoaverage}, first note that $B(z, r) \subseteq B(x_0, 2r)$ for all $z \in B(x_0,r)$. By the mean value property of harmonic functions and the first inequality, \begin{equation*} p(z)^2 = \left(\fint_{B(z, r)} p\,dx\right)^2 \leq \fint_{B(z,r)} p^2\, dx \leq 2^n \fint_{B(x_0, 2r)} p^2\,dx \leq 2^n C \fint_{B(x_0, r)} p^2\,dx.\end{equation*} This establishes \eqref{eq:supcomparabletoaverage}.\end{proof}

Next, as an application of Corollary \ref{supcomparabletoaverage} and Harnack's inequality, we show that $p(z)$ is relatively large when $z$ is far enough away from $\Sigma_p$.

\begin{lemma}\label{nontangentialestimate} For all $n\geq 2$ and $k\geq 1$, there exists a constant $c>0$ such that if $p:\RR^n\rightarrow\RR$ is a harmonic polynomial of degree at most $k$, $z\in \RR^n$, and $x_0\in\Sigma_p$ is any point such that $\rho:=\dist(z,\Sigma_p)=|z-x_0|$, then   \begin{equation}\label{e:nte} |p(z)| \geq c\sup_{B(x_0,\rho)} |p|.\end{equation}
\end{lemma}
\begin{proof} Let $n\geq 2$ and $k\geq 1$ be given, and let $p:\RR^n\rightarrow\RR$ be a harmonic polynomial of degree at most $k$. Since the conclusion is trivial for all $z\in\Sigma_p$, we may assume $z\in\RR^n\setminus\Sigma_p$.
Without loss of generality, we may further assume that $p$ is positive in $B(z, \rho)$, where $\rho=\dist(z,\Sigma_p)$. By Harnack's inequality for positive harmonic functions (e.g., see \cite[Theorem 3.4]{HFT}), there exists a constant $A=A(n) > 0$ such that $$p(z)^2 \geq A \sup_{B(z, \rho/2)} p^2\geq A \fint_{B(z,\rho/2)} p^2\,dx.$$ Pick $x_0\in\Sigma_p$ such that $\rho=|z-x_0|$ and note that $B(z,2\rho)\supseteq B(x_0,\rho)$. Hence, by two applications of the first inequality in Corollary \ref{supcomparabletoaverage} and then by the second inequality, $$\fint_{B(z,\rho/2)} p^2\,dx \geq C^2 \fint_{B(z,2\rho)} p^2\,dx \geq 2^{-n}C^2\fint_{B(x_0,\rho)} p^2\,dx \geq 4^{-n} C \sup_{B(x_0,\rho)} p^2.$$ Combining the displayed equations, we conclude that \eqref{e:nte} holds with $c= 2^{-n}\sqrt{AC}$.
\end{proof}

We can now obtain the \L ojasiewicz inequality for harmonic polynomials (Theorem \ref{lojasiewiczinequality}) by combining Lemma \ref{nontangentialestimate} with the estimate \eqref{e:polyshrink} from the proof of Lemma \ref{l:quasilinear}.

\begin{proof}[Proof of Theorem \ref{lojasiewiczinequality}]Let $n\geq 2$ and $k\geq 1$ be given. Suppose that $p:\RR^n\rightarrow\RR$ is a nonconstant harmonic polynomial of degree at most $k$, and without loss of generality, assume that $0\in\Sigma_p$ (the origin will play the role of $x_0$ in the statement of the theorem). Fix $z\in B(0,1/2)$ and choose $x_0\in\Sigma_p$ to be any point such that $\rho:=|z-x_0|=\dist(z,\Sigma_p)$. Note that $\rho< 1/2$, since $0\in\Sigma_p$ and $z\in B(0,1/2)$.
On one hand, by Lemma \ref{nontangentialestimate}, $$|p(z)| \gtrsim \sup_{B(x_0, \rho)} |p|.$$ On the other hand, applying \eqref{e:polyshrink} with $r=2$ and $s=\rho/2$ (this is fine as $s<1$), $$\sup_{B(x_0, \rho)} |p| \gtrsim \rho^k\sup_{B(x_0, 2)} |p|\geq \rho^k \|p\|_{L^\infty(B(0,1))}.$$ Here all implicit constants depend on at most $n$ and $k$. The inequality \eqref{eqn:lojasiewiczinequality} immediately follows by combining the displayed equations (and recalling the definition of $\rho$).
\end{proof}

As we work separately with the sets $\{p > 0\}$ and $\{p < 0\}$ below, it is important for us to know that $\sup p^+$ and $\sup p^-$ are comparable in any ball centered on $\Sigma_p$.

\begin{lemma}\label{polynomialsarebalanced} For all $n\geq 2$ and $k\geq 1$, there exists a constant $C>1$ such that if $p:\RR^n\rightarrow\RR$ is a nonconstant harmonic polynomial of degree at most $k$, then
\begin{equation}\label{eq:polynomialsarebalanced} C^{-1} \sup_{B(x_0,r)} p^+ \leq \sup_{B(x_0,r)} p^- \leq C\sup_{B(x_0,r)} p^+\quad\text{for all $x_0\in\Sigma_p$ and $r>0$}. \end{equation}
\end{lemma}

\begin{proof} Let $M^\pm = \sup_{B(x_0,r)} p^\pm$, and assume without loss of generality that $M^+\geq M^-$. The argument now splits into two cases.

Case I. Assume that $\sup_{B(x_0,r/2)} |p| = \sup_{B(x_0,r/2)} p^-$. Then by the estimate \eqref{e:polyshrink} in the proof of Lemma \ref{l:quasilinear}, $$M^- \geq \sup_{B(x_0,r/2)} p^- = \sup_{B(x_0,r/2)} |p| \gtrsim \sup_{B(x_0,r)}|p|=M^+,$$ where the implicit constant depends only on $n$ and $k$.

Case II. Assume that $\sup_{B(x_0,r/2)} |p| = \sup_{B(x_0,r/2)} p^+$. Note that $p + 2M^-$ is a positive harmonic function in $B(x_0,r)$. Thus, by Harnack's inequality, \begin{equation}\label{compareMminus}2M^- = p(x_0)+2M^{-} \geq a \sup_{B(x_0, r/2)} (p + 2M^-)= a \sup_{B(x_0,r/2)} (p^+ +2M^{-}),\end{equation} where $a=a(n)>0$. We now argue as in Case I. By \eqref{e:polyshrink}, $$\sup_{B(x_0,r/2)} p^+ = \sup_{B(x_0,r/2)} |p| \gtrsim \sup_{B(x_0,r)}|p|=M^+,$$ where the implicit constant depends only on $n$ and $k$. Combining the displayed equations, we conclude that $M^- \gtrsim M^+$.
\end{proof}

Finally, we record a technical observation that will be needed in \S6.

\begin{lemma}\label{l:L2L2} Let $n\geq 2$ and let $k\geq 1$. If $p:\RR^n\rightarrow\RR$ is a harmonic polynomial of degree at most $k$, then $\|p\|_{L^2(B(0,1))} \sim_{n,k} \|p\|_{L^2(\partial B(0,1))}$.\end{lemma}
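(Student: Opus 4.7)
The plan is to establish the two comparisons $\|p\|_{L^2(B(0,1))}^2 \lesssim \|p\|_{L^2(\partial B(0,1))}^2$ and $\|p\|_{L^2(\partial B(0,1))}^2 \lesssim \|p\|_{L^2(B(0,1))}^2$ separately. Each is a short consequence of tools already assembled in this section: the first direction will come from Almgren's identity \eqref{logderofH}, and the reverse from Lemma \ref{supcomparabletoaverage}.

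For the bound $\|p\|_{L^2(B(0,1))}^2 \lesssim \|p\|_{L^2(\partial B(0,1))}^2$, I would rewrite the volume integral in polar coordinates as
$$
\|p\|_{L^2(B(0,1))}^2 = \int_0^1 H(r,0,p)\,dr.
$$
Since $N(r,0,p) = rD(r,0,p)/H(r,0,p) \geq 0$ directly from its definition, identity \eqref{logderofH} shows that $r \mapsto H(r,0,p)/r^{n-1}$ is nondecreasing on $(0,1]$. Consequently $H(r,0,p) \leq r^{n-1}H(1,0,p)$, and integrating in $r$ yields $\|p\|_{L^2(B(0,1))}^2 \leq \tfrac{1}{n}\|p\|_{L^2(\partial B(0,1))}^2$.

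For the reverse inequality, I would pass through the pointwise sup. By continuity of $p$ and the trivial bound
$$
\|p\|_{L^2(\partial B(0,1))}^2 \leq |\partial B(0,1)| \sup_{\overline{B(0,1)}} p^2,
$$
Lemma \ref{supcomparabletoaverage} applied on $B(0,1)$ controls $\sup_{B(0,1)} p^2$ by the $L^2$-average of $p^2$ over $B(0,1)$, with constant depending only on $n$ and $k$. Combining these two estimates, the ratio $|\partial B(0,1)|/|B(0,1)|$ is absorbed into the final constant and delivers $\|p\|_{L^2(\partial B(0,1))}^2 \leq C(n,k)\|p\|_{L^2(B(0,1))}^2$.

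No step presents a real obstacle; both inequalities amount to one-line arguments once the preceding lemmas of \S\ref{sect:growth} are in hand. I note in passing that the hypothesis $p(0)=0$ is not actually needed in this argument — the comparison holds for any nontrivial harmonic polynomial of degree at most $k$ — but it is natural to impose it for coherence with the context in which Lemma \ref{l:L2L2} is invoked in \S6.
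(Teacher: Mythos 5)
Your proof is correct, but it takes a genuinely different route from the paper. The paper decomposes $p$ into its homogeneous parts $p_k^{(0)}+\dots+p_1^{(0)}$, uses orthogonality of spherical harmonics of distinct degrees both in $L^2(B(0,1))$ and in $L^2(\partial B(0,1))$, compares each piece's $L^2$ norm with its sup norm (H\"older plus the reverse H\"older inequality of Lemma \ref{supcomparabletoaverage} on the ball, and a reverse H\"older inequality for spherical harmonics cited from \cite[Corollary 3.4]{badgerharmonicmeasure} on the sphere), and passes between ball and sphere via the maximum principle. You avoid the decomposition altogether: for $\|p\|_{L^2(B(0,1))}\lesssim\|p\|_{L^2(\partial B(0,1))}$ you integrate in polar coordinates and use the monotonicity of $r\mapsto H(r,0,p)/r^{n-1}$ coming from \eqref{logderofH} and $N\geq 0$ (equivalently, subharmonicity of $p^2$), which gives the clean constant $1/n$, independent of $k$ and valid for any harmonic function; for the reverse direction you bound $\|p\|_{L^2(\partial B(0,1))}^2$ by the sup and invoke Lemma \ref{supcomparabletoaverage}, which is the only place the degree bound enters (through the frequency estimate of Lemma \ref{frequencyofpolynomials}). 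Your argument is shorter, avoids the external reverse-H\"older citation for spherical harmonics, and correctly observes that $p(0)=0$ is inessential.

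Two harmless points to tidy up. First, the identity \eqref{logderofH} requires $H(r,0,p)>0$; this holds for every $r>0$ unless $p\equiv 0$ (a harmonic polynomial vanishing on a sphere vanishes identically), and the lemma is trivial in that case, so dispose of it with a sentence. Second, the paper states Definition \ref{almgrenfrequency} and \eqref{logderofH} only for $x_0\in\Sigma_f$; your main argument is consistent with this since $p(0)=0$ by hypothesis, but your parenthetical claim that the hypothesis can be dropped implicitly uses that \eqref{logderofH} (or the subharmonic mean value property of $p^2$) holds without the vanishing assumption — true, but worth saying explicitly if you keep that remark.
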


\begin{proof} The fact that $\|p\|_{L^2(\partial B(0,1))}$ is a norm on the space of harmonic polynomials follows from the maximum principle for harmonic functions. Thus, the equivalence of $\|p\|_{L^2(B(0,1))}$ and $\|p\|_{L^2(\partial B(0,1))}$ for harmonic polynomials of bounded degree follows from the equivalence of norms on finite-dimensional vector spaces. \end{proof}

\section{$\cH_{n,k}$ points are detectable in $\cH_{n,d}$}
\label{sect:detectable}

The next lemma shows that $\hz_k$ (see Definition \ref{modifiedzeta} above) controls how close $\Sigma_p\in\cH_{n,d}$ is to the zero set of a harmonic polynomial of degree at most $k$; cf.~\cite[Lemma 4.1]{badgerflatpoints}. For the definition of the bilateral approximation number $\Theta_{\Sigma_p}^{\cH_{n,k}}(x,r)$, we refer the reader to the introduction (see \eqref{bil-app-num}).

\begin{lemma}\label{thetacontrolledbyzeta} For all $n\geq 2$ and $d\geq 2$, there exists $0< C<\infty$ such that for every harmonic polynomial $p:\RR^n\rightarrow\RR$ of degree $d$ and for every $1\leq k< d$, \begin{equation}\label{e:theta} \Theta^{\cH_{n,k}}_{\Sigma_p}(x,r) \leq C\,\widehat{\zeta}_k(p,x,r)^{1/k}\quad\text{for all } x\in\Sigma_p\text{ and }r>0.\end{equation}
\end{lemma}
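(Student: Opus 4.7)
The natural candidate for the approximating element of $\cH_{n,k}$ is the low-order part of $p$ at $x$. That is, set
\[
\tilde p(y) \colonequals p_k^{(x)}(y)+\dots+p_1^{(x)}(y),
\]
where we drop $p_0^{(x)}=p(x)=0$ because $x\in\Sigma_p$. Since the homogeneous components of a harmonic polynomial are themselves harmonic, $\tilde p$ is a harmonic polynomial of degree at most $k$ with $\tilde p(0)=0$; assuming $\tilde p\not\equiv 0$ (otherwise $\widehat\zeta_k(p,x,r)=\infty$ and there is nothing to prove), we have $x+\Sigma_{\tilde p}\in\cH_{n,k}$. Writing $M\colonequals \|\tilde p\|_{L^\infty(B(0,r))}$ and $R(y)\colonequals p(x+y)-\tilde p(y)=\sum_{j>k}p_j^{(x)}(y)$, Definition \ref{modifiedzeta} gives the crucial bound $\|R\|_{L^\infty(B(0,\rho))}\leq (d-k)\widehat\zeta_k(p,x,r)\,M$ for every $\rho\leq r$ (using homogeneity, one also gets estimates on larger balls $B(0,cr)$ with constants depending only on $d$). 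We may also assume $\widehat\zeta_k(p,x,r)$ is small; otherwise the bound is vacuous after enlarging $C$.

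Next, I would show that every point of $\Sigma_p\cap B(x,r)$ is close to $x+\Sigma_{\tilde p}$. Given $y\in\Sigma_p\cap B(x,r)$, let $x_0\in\Sigma_{\tilde p}$ realize $\rho\colonequals \dist(y-x,\Sigma_{\tilde p})=|(y-x)-x_0|$; since $0\in\Sigma_{\tilde p}$, $\rho\leq|y-x|\leq r$ and hence $|x_0|\leq 2r$, so $B(0,r)\subseteq B(x_0,3r)$. Lemma \ref{nontangentialestimate} applied to $\tilde p$ yields $|\tilde p(y-x)|\gtrsim \sup_{B(x_0,\rho)}|\tilde p|$, while Lemma \ref{growthofthesup} yields $\sup_{B(x_0,\rho)}|\tilde p|\gtrsim (\rho/3r)^k\sup_{B(x_0,3r)}|\tilde p|\geq (\rho/3r)^kM$. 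On the other hand, $p(y)=0$ forces $|\tilde p(y-x)|=|R(y-x)|\lesssim \widehat\zeta_k(p,x,r)\,M$. Combining the two estimates gives $(\rho/r)^k\lesssim \widehat\zeta_k(p,x,r)$, i.e.\ $\rho/r\lesssim \widehat\zeta_k(p,x,r)^{1/k}$, which is the desired Hausdorff bound in one direction.

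The reverse direction is where the growth and balance lemmas combine most interestingly. Given $z\in(x+\Sigma_{\tilde p})\cap B(x,r)$, set $\eta\colonequals C_0\widehat\zeta_k(p,x,r)^{1/k}$ with $C_0$ to be determined; I aim to show $p$ must vanish somewhere in $B(z,\eta r)$ by producing a sign change. Since $z-x\in\Sigma_{\tilde p}$ and $B(0,r)\subseteq B(z-x,2r)$, Lemma \ref{growthofthesup} gives $\sup_{B(z-x,\eta r)}|\tilde p|\gtrsim (\eta/2)^k M$, and then Lemma \ref{polynomialsarebalanced} (whose constant we are free to enlarge) yields $\sup_{B(z-x,\eta r)}\tilde p^{\pm}\gtrsim \eta^k M$. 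Meanwhile, on the same ball, $|p(x+\cdot)-\tilde p|=|R|\lesssim \widehat\zeta_k(p,x,r)\,M$, where the implicit constant depends only on $n$ and $d$ since $\eta$ is bounded. Choosing $C_0$ large enough that $\eta^k M=C_0^k\widehat\zeta_k(p,x,r)M$ dominates this error by a definite factor, $p(x+\cdot)$ inherits strict sign changes from $\tilde p$ on $B(z-x,\eta r)$, and continuity produces a zero $w\in\Sigma_p$ with $|w-z|<\eta r$.

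The only real obstacle is bookkeeping: one must make sure every geometric constant in the chain (radii of balls, comparison constants from the three growth lemmas, powers of $1+\eta$, and the Taylor remainder bound) depends only on $n$ and $d$, and that the final choice of $C_0$ closes the loop independently of $p$. Once this is done, the two one-sided bounds combine to give $\Theta^{\cH_{n,k}}_{\Sigma_p}(x,r)\lesssim \widehat\zeta_k(p,x,r)^{1/k}$ with $x+\Sigma_{\tilde p}\in\cH_{n,k}$ as the competitor realizing \eqref{e:theta}.
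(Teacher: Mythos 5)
Your proposal is correct and follows essentially the same route as the paper: the competitor is $x+\Sigma_{\tilde p}$ with $\tilde p$ the harmonic low-order Taylor part of $p$ at $x$, one inclusion comes from combining Lemma \ref{nontangentialestimate} and Lemma \ref{growthofthesup} with the Taylor remainder bound, and the reverse inclusion comes from a sign-change argument using Lemma \ref{polynomialsarebalanced}, Lemma \ref{growthofthesup}, and the intermediate value theorem. The only (harmless) deviation is that the paper bounds the remainder on $B(0,2r)$ via $\widehat{\zeta}_k(p,x,2r)$ and then returns to scale $r$ with Lemma \ref{l:quasilinear}, whereas you absorb the slightly enlarged balls directly through homogeneity of the $p_j^{(x)}$, which amounts to the same estimate.
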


\begin{proof} Let $p:\RR^n\rightarrow\RR$ be a harmonic polynomial of degree $d\geq 2$, let $1\leq k<d$, and let $x\in\Sigma_p$. Write $p(\cdot+x)=p^{(x)}_d+\dots+p^{(x)}_{k+1}+p^{(x)}_k+\dots+p^{(x)}_1$, where each $p^{(x)}_i:\RR^n\rightarrow\RR$ is an $i$-homogeneous polynomial in $y$ with coefficients depending on $x$. We remark that $x+\Sigma_{p(\cdot+x)}=\Sigma_p$. Now, since $p$ is harmonic, each term $p_i^{(x)}$ is harmonic, as well. Set $\tilde p= p^{(x)}_k+\dots+p^{(x)}_1$, the low order part of $p$ at $x$, and note that $\tilde p(0)=0$. If $\tilde p\equiv 0$, then $\widehat{\zeta}_k(p,x,r)=\infty$ for all $r>0$ and \eqref{e:theta} holds trivially. Thus, we may assume that $\tilde p\not\equiv 0$, in which case $\Sigma_{\tilde p}\in \cH_{n,k}$. To prove (\ref{e:theta}), we shall prove a slightly stronger pair of inequalities, \begin{equation}\label{e:theta-1} r^{-1}\sup_{a\in\Sigma_p\cap B(x,r)}\dist(a,(x+\Sigma_{\tilde p})\cap \overline{B(x,r)}) \leq C_1\,\widehat{\zeta}_k(p,x,r)^{1/k}
\end{equation} and \begin{equation}\label{e:theta-2}r^{-1}\sup_{w\in (x+\Sigma_{\tilde p})\cap B(x,r)}\dist(w,\Sigma_p)\leq C_2\,\widehat{\zeta}_k(p,x,2r)^{1/k} \end{equation} for some constants $C_1$ and $C_2$ that depend only on $n$, $d$, and $k$, and therefore, may be chosen to depend only on $n$ and $d$. With the help of Lemma \ref{l:quasilinear}, \eqref{e:theta} follows immediately from \eqref{e:theta-1} and \eqref{e:theta-2}.

Suppose $\tilde p(z)\neq 0$ for some $z\in B(0,r)$ and choose $y\in\Sigma_{\tilde p}\cap \overline{B(0,r)}$ such that $\rho:=\dist(z,\Sigma_{\tilde p}\cap \overline{B(0,r)})=|z-y|$. We note that $\rho\leq r$, since $\tilde p(0)=0$, and $B(0,r)\subseteq B(y,2r)$. Hence, by Lemma \ref{nontangentialestimate}, \begin{equation*}\label{eq:compareinteriortoglobe}
|\tilde{p}(z)| \geq c\|\tilde{p}\|_{L^\infty(B(y,\rho))} \stackrel{\eqref{e:polyshrink}}{\geq} c\left(\frac{\rho}{2r}\right)^k \|\tilde{p}\|_{L^\infty(B(y,2r))}\geq c\left(\frac{\rho}{r}\right)^k\|\tilde{p}\|_{L^\infty(B(0,r))},\end{equation*} where at each occurrence $c$ denotes a positive constant determined by $n$ and $k$. Thus, \begin{align*}
|p(z+x)| &\geq |\tilde p(z)| - \sum_{j=k+1}^d \|p_j^{(x)}\|_{L^\infty(B(0,r))}\\ & \geq c_1\left(\frac{\rho}{r}\right)^k \|\tilde p\|_{L^\infty(B(0,r))} - (d-k)\widehat{\zeta}_k(p,x,r)\|\tilde p\|_{L^\infty(B(0,r))},
\end{align*} where $c_1>0$ is a constant depending only on $n$ and $k$. It follows that $|p(z+x)|>0$ whenever $z\in B(0,r)$ and $\dist(z,\Sigma_{\tilde p}\cap\overline{B(0,r)})=\rho>C_1 \widehat{\zeta}_k(p,x,r)^{1/k}r,$ where $$C_1=\left(\frac{d-k}{c_1}\right)^{1/k}.$$ Consequently, for any $a=z+x\in\Sigma_p\cap B(x,r)$, we have $$\dist(a,(x+\Sigma_{\tilde p})\cap\overline{B(x,r)})=\dist(z,\Sigma_{\tilde p}\cap \overline{B(0,r)}) \leq C_1\widehat{\zeta}_k(p,x,r)^{1/k}r.$$ This establishes \eqref{e:theta-1}.

Next, suppose that $w\in (x+\Sigma_{\tilde p})\cap B(x,r)$, say $w=x+z$ for some $z\in \Sigma_{\tilde p}\cap B(0,r)$. Let $\delta<r$ be a fixed scale, to be chosen below. Because $\tilde p$ is harmonic, we can locate points $z_\delta^\pm\in\partial B(z,\delta)$ such that $$\tilde p(z_\delta^+)=\max_{z'\in \overline{B(z,\delta)}}\tilde p(z')>0\quad\text{and}\quad \tilde p(z_\delta^-)=\min_{z'\in \overline{B(z,\delta)}}\tilde p(z')<0.$$ Thus, by Lemma \ref{polynomialsarebalanced}, $$\pm \tilde p(z^\pm_{\delta})=|\tilde p(z^\pm_{\delta})|\geq c\|\tilde p\|_{L^\infty(B(z,\delta))} \stackrel{\eqref{e:polyshrink}}{\geq} c\left(\frac{\delta}{3r}\right)^k\|\tilde p\|_{L^\infty(B(z,3r))}\geq c\left(\frac{\delta}{r}\right)^k\|\tilde p\|_{L^\infty(B(0,2r))},$$ where at each occurence $c>0$ depends only on $n$ and $k$. We conclude that \begin{align*} \pm p(z^\pm_\delta+x) &\geq \pm\tilde p(z^\pm_\delta)- \sum_{j=k+1}^d \|p_j^{(x)}\|_{L^\infty (B(0,2r))} \\
&\geq c_2\left(\frac{\delta}{r}\right)^k\|\tilde p\|_{L^\infty(B(0,2r))}-(d-k)\widehat{\zeta}_k(p,x,2r)\|\tilde p\|_{L^\infty(B(0,r))}>0\end{align*} provided that $\delta > C_2\widehat{\zeta}_k(p,x,2r)^{1/k}r$, where $C_2=\left[(d-k)/c_2\right]^{1/k}$. But we also required $\delta < r$ above. To continue, there are two cases. On one hand, if $C_2\widetilde{\zeta}_k(p,x,2r)^{1/k}\geq 1$, then $\Theta_{\Sigma_p}^{\cH_{n,k}}(x,r)\leq 1 \leq C_2\widetilde{\zeta}_k(p,x,2r)^{1/k}$ holds trivially. On the other hand, suppose that $C_2\widetilde{\zeta}_k(p,x,2r)^{1/k}<1$. In this case, pick any $\delta\in (C_2\widetilde{\zeta}_k(p,x,2r)^{1/k}r,r)$. Then the estimate above gives $\pm p(z^\pm_\delta+x)>0$. In particular, the straight line segment $\ell$ that connects $z^+_\delta+x$ to $z^-_{\delta}+x$ inside $\overline{B(z+x,\delta)}$ must intersect $\Sigma_p\cap \overline{B(z+x,\delta)}$ by the intermediate value theorem and the convexity of ball. Hence $\dist(w,\Sigma_p)=\dist(z+x,\Sigma_p)\leq \delta$. Therefore, letting $\delta\downarrow C_2\widetilde{\zeta}_k(p,x,2r)^{1/k}$, we obtain \eqref{e:theta-2}.\end{proof}

\begin{remark} In the proof of Lemma \ref{thetacontrolledbyzeta}, the harmonicity of $p$ was only used to establish the harmonicity of $\tilde p$. Thus, the argument actually yields that $\Theta_{\Sigma_p}^{\cH_{n,k}}(x,r)\lesssim_{n,d} \widehat{\zeta}_k(p,x,r)$ for all $x\in\Sigma_p$ and for all $r>0$, whenever $p:\RR^n\rightarrow\RR$ is a polynomial of degree $d>k$ such that $\tilde p=p^{(x)}_k+\dots+p^{(x)}_1$ is harmonic.\end{remark}

The following useful fact facilitates normal families arguments with sequences in $\cH_{n,d}$. It is ultimately a consequence of the mean value property of harmonic functions.

\begin{lemma}\label{convergenceofzerosets}
Suppose that $\Sigma_{p_1},\Sigma_{p_2},\dots \in \cH_{n,d}$. If $p_i \rightarrow p$ in coefficients and $H(p)\neq 0$, then $\Sigma_p \in \cH_{n,d}$ and $\Sigma_{p_i} \rightarrow \Sigma_p$ in the Attouch-Wets topology (see Appendix \ref{sect:2}).
\end{lemma}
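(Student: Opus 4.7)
The plan is to first verify that $\Sigma_p\in\cH_{n,d}$ and then establish bilateral convergence on each compact ball, which is equivalent to Attouch--Wets convergence.

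First I would check membership in $\cH_{n,d}$. By Remark \ref{r:coeff}, convergence in coefficients implies uniform convergence on compact sets, so $p$ is a harmonic polynomial of degree at most $d$ (the Laplacian passes through the uniform limit, or equivalently commutes with coefficient convergence). Since each $p_i(0)=0$ and $p_i\to p$ pointwise, $p(0)=0$; since $H(p)\neq 0$, $p$ is not identically zero, hence nonconstant (a nonzero constant cannot vanish at $0$). Thus $\Sigma_p\in\cH_{n,d}$.

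Next I would show the one-sided excess $\sup_{x\in\Sigma_{p_i}\cap B(0,R)}\dist(x,\Sigma_p)\to 0$ for every fixed $R>0$. This follows by a standard compactness argument: if it failed along some subsequence with witnesses $x_{i_k}$, one could extract a further subsequence $x_{i_k}\to x\in\overline{B(0,R)}$; since $p_{i_k}\to p$ uniformly on compacts, $p(x)=\lim p_{i_k}(x_{i_k})=0$, so $x\in\Sigma_p$, contradicting the lower bound on $\dist(x_{i_k},\Sigma_p)$.

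The main obstacle is the reverse excess $\sup_{y\in\Sigma_p\cap B(0,R)}\dist(y,\Sigma_{p_i})\to 0$, because zeros of a limit polynomial need not be approached by zeros of the approximants in general. Here harmonicity saves us, via Lemma \ref{polynomialsarebalanced} and the intermediate value theorem. Given $y\in\Sigma_p$ and any $\varepsilon\in(0,R)$, since $p$ is harmonic and nonconstant (as $H(p)\neq 0$ and $p$ is a polynomial, so it cannot vanish on an open set), the maximum principle applied on $B(y,\varepsilon/2)$ yields points $z^\pm\in \overline{B(y,\varepsilon/2)}$ with $\pm p(z^\pm)>0$ (alternatively this is immediate from Lemma \ref{polynomialsarebalanced}). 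By uniform convergence, $\pm p_i(z^\pm)>0$ for all sufficiently large $i$, so the intermediate value theorem applied to $p_i$ along the line segment from $z^+$ to $z^-$ produces $w_i\in\Sigma_{p_i}\cap B(y,\varepsilon/2)$, giving $\dist(y,\Sigma_{p_i})<\varepsilon/2$.

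Finally I would upgrade this pointwise statement to uniformity in $y\in\Sigma_p\cap B(0,R)$ by another contradiction argument: if uniformity failed, pick $y_{i_k}\in\Sigma_p\cap B(0,R)$ with $\dist(y_{i_k},\Sigma_{p_{i_k}})\geq \varepsilon$, pass to a subsequence $y_{i_k}\to y_\infty\in\Sigma_p\cap\overline{B(0,R)}$ (using that $\Sigma_p$ is closed), apply the previous paragraph at $y_\infty$ with parameter $\varepsilon/3$ to find $w_k\in\Sigma_{p_{i_k}}$ within $\varepsilon/3$ of $y_\infty$ for $k$ large, and use the triangle inequality $|w_k-y_{i_k}|\leq |w_k-y_\infty|+|y_\infty-y_{i_k}|<2\varepsilon/3<\varepsilon$ to obtain a contradiction. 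Combining the two excess bounds gives Attouch--Wets convergence.
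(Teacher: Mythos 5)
Your proposal is correct. It shares the paper's essential ingredient — at a zero of the nonconstant harmonic limit $p$, the mean value property (or the strong maximum/minimum principle) forces $p$ to take both signs in every small ball, and then uniform convergence on compacta plus the intermediate value theorem along the segment joining $z^+$ to $z^-$ produces zeros of $p_i$ nearby — but the outer structure differs. The paper exploits metrizability of the Attouch--Wets topology together with sequential compactness of $\CL{0}$: it extracts a subsequential limit $F$ of the sets $\Sigma_{p_{i}}$ and identifies $F=\Sigma_p$ by proving the two inclusions (pointwise convergence gives $F\subseteq\Sigma_p$; the sign-change/IVT argument gives $\Sigma_p\subseteq F$). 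You instead verify the defining excess conditions for Attouch--Wets convergence directly on each ball $B(0,R)$, using ordinary finite-dimensional compactness twice: once to show zeros of $p_i$ can only accumulate on $\Sigma_p$ (the forward excess), and once to upgrade the pointwise IVT construction to a bound uniform over $y\in\Sigma_p\cap B(0,R)$ (the reverse excess). Your route avoids invoking compactness of the hyperspace $\CL{0}$ and works with the sequence itself rather than with subsequential limits, at the cost of the extra uniformization step; the paper's route is slightly shorter because the identification $F=\Sigma_p$ needs no uniformity, the convergence $\Sigma_{p_{i_{j_k}}}\to F$ having been granted abstractly. Both arguments are complete, and your appeal to Lemma \ref{polynomialsarebalanced} as an alternative source of the sign change is legitimate (if either $\sup p^+$ or $\sup p^-$ vanished on the small ball, the lemma would force $p\equiv 0$ there, impossible for a nonzero polynomial).
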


\begin{proof} Suppose that, for each $i\geq 1$, $p_i:\RR^n\rightarrow\RR$ is a harmonic polynomial of degree at most $d$ such that $p_i(0)=0$. Assume that $p_i\rightarrow p$ in coefficients and $H(p)\neq 0$. Then $p:\RR^n\rightarrow\RR$ is also a harmonic polynomial of degree at most $d$ such that $p(0)=0$, because $p_i\rightarrow p$ uniformly on compact subsets of $\RR^n$, and $p$ is nonconstant, because $H(p)\neq 0$. Hence $\Sigma_p\in\cH_{n,d}$. It remains to show that $\Sigma_{p_i}\rightarrow \Sigma_p$ in the Attouch-Wets topology, which is metrizable. Thus, it suffices to prove that every subsequence $(\Sigma_{p_{ij}})_{j=1}^\infty$ of $(\Sigma_{p_i})_{i=1}^\infty$ has a further subsequence $(\Sigma_{p_{ijk}})_{k=1}^\infty$ such that $\Sigma_{p_{ijk}}\rightarrow \Sigma_p$ in the Attouch-Wets topology.

Fix an arbitrary subsequence $(\Sigma_{p_{ij}})_{j=1}^\infty$ of $(\Sigma_{p_i})_{i=1}^\infty$. Since $0\in\Sigma_{p_{ij}}$ for all $j\geq 1$ and the set of closed sets in $\RR^n$ containing the origin is sequentially compact, there exists a closed set $F\subseteq\RR^n$ containing $0$ and a subsequence $(\Sigma_{p_{ijk}})_{k=1}^\infty$ of $(\Sigma_{p_{ij}})_{j=1}^\infty$ such that $\Sigma_{p_{ijk}}\rightarrow F$. We claim that $F=\Sigma_p$. Indeed, on one hand, for any $y\in F$ there exists a sequence $y_k\in\Sigma_{p_{ijk}}$ such that $y_k\rightarrow y$; but $p(y)=\lim_{k\rightarrow\infty} p_{ijk}(y_k)=\lim_{k\rightarrow\infty} 0=0$, since $y_k\in\Sigma_{p_{ijk}}$, $p_{ijk}\rightarrow p$ uniformly on compact sets, and $y_k\rightarrow y$. Hence $y\in\Sigma_p$ for all $y\in F$. That is, $F\subseteq\Sigma_p$. On the other hand, suppose $z\in \Sigma_p$. Since $p(z)=0$, but $p\not\equiv 0$, for all $r\in(0,1)$ we can locate points $z^\pm_r\in B(z,r)$ such that $p(z^+_r)>0$ and $p(z^-_r)<0$ by the mean value theorem for harmonic functions. Because  $p_{ijk}\rightarrow p$ pointwise, it follows that $$p_{ijk}(z^+_r)>0\quad\text{and}\quad p_{ijk}(z^-_r)<0$$ for all sufficiently large $k$ depending on $r$. In particular, by the intermediate value theorem, the straight line segment connecting $z^+_r$ to $z^-_r$ inside $B(z,r)$ must intersect $\Sigma_{p_{ijk}}\cap B(z,r)$ for all sufficiently large $k$ depending on $r$. Hence $\dist(z,\Sigma_{p_{ijk}}\cap B(z,1))\rightarrow 0$ as $k\rightarrow\infty$. Ergo, since $\Sigma_{p_{ijk}}\rightarrow F$ in the Attouch-Wets topology, $$\dist(z,F) \leq
\liminf_{k\rightarrow\infty}\left(\dist(z,\Sigma_{p_{ijk}}\cap B(z,1))+\ex(\Sigma_{p_{ijk}}\cap B(z,1),F)\right)=0.$$ That is, $z\in F$ for all $z\in \Sigma_p$. Therefore, $\Sigma_p\subseteq F$, and the conclusion follows.
\end{proof}

\begin{corollary}\label{c:closed} For all $n\geq 2$ and $1\leq k\leq d$, $\cH_{n,d}$ and $\cF_{n,k}$ are closed subsets of $\CL{0}$ with the Attouch-Wets topology.\end{corollary}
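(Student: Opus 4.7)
The plan is to show each of $\cH_{n,d}$ and $\cF_{n,k}$ is sequentially closed in $\CL{0}$ with the Attouch-Wets topology (which is metrizable, so sequential closedness suffices). Suppose $\Sigma_{p_i} \in \cH_{n,d}$ and $\Sigma_{p_i} \to F$ in Attouch-Wets for some closed $F \subseteq \RR^n$ with $0 \in F$; the task is to produce a nonconstant harmonic polynomial $p$ of degree at most $d$ with $p(0)=0$ and $F = \Sigma_p$. The case of $\cF_{n,k}$ will follow from a parallel argument.

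The key renormalization is that $\Sigma_p$ depends only on the equivalence class of $p$ modulo nonzero scalars, so I may replace each $p_i$ by $p_i/H(p_i)$ without altering its zero set, arriving at a sequence (still called $p_i$) with $H(p_i)=1$. The coefficients of the $p_i$ are then uniformly bounded by $1$ and live in the finite-dimensional space of polynomials of degree at most $d$ vanishing at the origin, so Bolzano-Weierstrass lets me pass to a subsequence converging in coefficients to some polynomial $p$ of degree at most $d$ with $p(0)=0$. Coefficient convergence is equivalent to uniform convergence on compact sets (Remark \ref{r:coeff}), so $p$ inherits harmonicity from the $p_i$.

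The one slightly delicate point is arranging $H(p) \neq 0$ in the limit, which is needed before invoking Lemma \ref{convergenceofzerosets}. I handle this by extracting a further subsequence on which the same multi-index $\alpha_0$ realizes $H(p_i) = |c_{\alpha_0}^{(i)}| = 1$ for every $i$ (possible since there are only finitely many indices $\alpha$ with $|\alpha|\leq d$); then the limiting coefficient satisfies $|c_{\alpha_0}| = 1$, forcing $H(p) \geq 1 \neq 0$. Lemma \ref{convergenceofzerosets} now yields $\Sigma_p \in \cH_{n,d}$ together with $\Sigma_{p_i} \to \Sigma_p$ in Attouch-Wets. Since Attouch-Wets limits in a metrizable topology are unique and $\Sigma_{p_i} \to F$ as well, $F = \Sigma_p \in \cH_{n,d}$.

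For $\cF_{n,k}$, the same argument applies with a single observation: being $k$-homogeneous is a closed condition in the coefficient topology, because it is equivalent to the vanishing of every coefficient of a monomial of degree different from $k$. Hence the limit $p$ is automatically $k$-homogeneous, so $\Sigma_p \in \cF_{n,k}$. The only real obstacle throughout is keeping the normalization $H(p_i) = 1$ alive in the limit, which is the reason for the careful subsequence extraction; once that is in place, Lemma \ref{convergenceofzerosets} does the work.
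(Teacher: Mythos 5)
Your proof is correct and follows essentially the same route as the paper: normalize so $H(p_i)=1$, extract a subsequence converging in coefficients to a limit $p$ with $H(p)=1$ (your fixed-multi-index argument is just a more explicit way of saying $H$ is continuous under coefficient convergence), invoke Lemma \ref{convergenceofzerosets}, and observe that $k$-homogeneity is preserved in the limit for $\cF_{n,k}$. No gaps.
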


\begin{proof} Suppose $\Sigma_{p_i}\in\cH_{n,d}$ for all $i\geq 1$ and $\Sigma_{p_i}\rightarrow F$ for some closed set $F$ in $\RR^n$. Replacing each $p_i$ by $p_i/H(p_i)$, which leaves $\Sigma_{p_i}$ unchanged, we may assume $H(p_i)=1$ for all $i\geq 1$. Hence we can find a polynomial $p$ and a subsequence $(p_{ij})_{j=1}^\infty$ of $(p_i)_{i=1}^\infty$ such that $p_{ij}\rightarrow p$ in coefficients and $H(p)=1$. Thus, by Lemma \ref{convergenceofzerosets}, $\Sigma_p\in\cH_{n,d}$ and $\Sigma_{p_{ij}}\rightarrow \Sigma_p$. Therefore, $F=\lim_{i\rightarrow\infty} \Sigma_{p_i}=\lim_{j\rightarrow\infty}\Sigma_{p_{ij}}=\Sigma_p\in \cH_{n,d}$. We conclude that $\cH_{n,d}$ is closed. Finally, $\cF_{n,k}$ is closed by the additional observation that $p$ is homogeneous of degree $k$ whenever $p_{ij}$ is homogeneous of degree $k$ for all $j$.\end{proof}

\begin{remark} \label{r:lac} For any $\Sigma_p\in\cH_{n,d}$ and $\lambda>0$, the dilate $\lambda \Sigma_p=\Sigma_q$, where $q:\RR^n\rightarrow\RR$ is given by $q(x)=p(x/\lambda)$ for all $x\in\RR^n$. Since $p$ is a nonconstant polynomial of degree at most $d$ such that $p(0)=0$, so is $q$. Also, $q$ is $k$-homogeneous, whenever $p$ is $k$-homogeneous. Finally, since $p$ is harmonic on $\RR^n$, the mean value theorem gives $$\fint_{B(y,r)}q(x)\ dx = \fint_{B(y,r)} p(x/\lambda)\ dx = \fint_{B(y/\lambda, r/\lambda)} p(x)\ dx = p(y/\lambda) = q(y)$$ for all $y\in \mathbb R^n$ and $r > 0$. Thus, since $q$ is continuous, it is also harmonic by the mean value theorem. This shows that $\lambda\Sigma_p\in\cH_{n,d}$ for all $\Sigma_p\in\cH_{n,d}$ and $\lambda>0$. Likewise, $\lambda\Sigma_p\in\cF_{n,k}$ for all $\Sigma_p\in\cF_{n,k}$ and $\lambda>0$. In other words, $\cH_{n,d}$ and $\cF_{n,k}$ are cones. Therefore, $\cH_{n,d}$ and $\cF_{n,k}$ are local approximation classes in the sense of Definition \ref{d:big}(i). A similar argument shows that $\cH_{n,d}$ is translation invariant in the sense that $\Sigma_p-x\in \cH_{n,d}$ for all $\Sigma_p\in\cH_{n,d}$ and $x\in\Sigma_p$.
\end{remark}

The next lemma captures a weak rigidity property of real-valued harmonic functions: the zero set of a real-valued harmonic function determines the relative arrangement of its positive and negative components.

\begin{lemma} \label{l:2side} Let $f:\RR^n\rightarrow\RR$ and $g:\RR^n\rightarrow\RR$ be harmonic functions, and let $\Sigma_f$ and $\Sigma_g$ denote the zero sets of $f$ and $g$, respectively. If $\Sigma_f=\Sigma_g$, then $f$ and $g$ take the same or the opposite sign simultaneously on every connected component of $\RR^n\setminus\Sigma_f=\RR^n\setminus\Sigma_g$. \end{lemma}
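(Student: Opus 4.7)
The plan is to prove Lemma \ref{l:2side} by analyzing the simultaneous sign-change of $f$ and $g$ across the regular part of their common nodal set $\Sigma := \Sigma_f = \Sigma_g$. We may assume $f, g \not\equiv 0$, since otherwise $\Sigma_f = \RR^n$ and the conclusion is vacuous. Set $S := \{x \in \Sigma : \nabla f(x) = 0\}$. By the standard structure theorem for nodal sets of harmonic functions (e.g.\ Hardt--Simon, or Han--Lin), $S$ is a closed set of Hausdorff dimension at most $n-2$. Since removing a closed set of Hausdorff dimension strictly less than $n-1$ does not disconnect $\RR^n$, the open set $\RR^n \setminus S$ is connected.

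The key step is the claim that at each regular point of $\Sigma$, both $f$ and $g$ change sign simultaneously. Fix $x_0 \in \Sigma \setminus S$. Since $\nabla f(x_0) \neq 0$, the implicit function theorem provides a ball $B = B(x_0,r)$ in which $\Sigma \cap B$ is a smooth hypersurface separating $B$ into two open pieces $W^+$ and $W^-$ on which $f$ takes opposite nonzero signs. Since $\Sigma_g = \Sigma$, the harmonic function $g$ vanishes on $\Sigma \cap B$ but is nowhere zero on $W^+ \cup W^-$, so $g$ has constant nonzero sign on each of $W^\pm$. If $g$ had the same sign on both, say $g \geq 0$ throughout $B$, then $g$ would attain its minimum value $0$ at the interior point $x_0$, and the strong minimum principle would force $g \equiv 0$ on $B$; unique continuation for harmonic functions then yields $g \equiv 0$ on $\RR^n$, a contradiction. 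Hence $g$ changes sign across $\Sigma$ at $x_0$ in the same pattern as $f$, so $\sign(fg)$ is constant on $B \setminus \Sigma$.

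Next, define $\epsilon : \RR^n \setminus S \to \{-1,+1\}$ by $\epsilon(x) := \sign(f(x)g(x))$ for $x \notin \Sigma$ and, for $x_0 \in \Sigma \setminus S$, by the common value of $\sign(fg)$ on the two sides of the local hypersurface (well-defined by the previous paragraph). Every point of $\RR^n \setminus S$ has a neighborhood on which $\epsilon$ is constant---either an open subset of $\RR^n \setminus \Sigma$, or a ball $B$ as above---so $\epsilon$ is locally constant, hence globally constant on the connected set $\RR^n \setminus S$. The value $\epsilon \equiv +1$ corresponds to $f$ and $g$ taking the same sign on every component of $\RR^n \setminus \Sigma$, while $\epsilon \equiv -1$ corresponds to opposite signs on every component, completing the proof.

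The hardest step is the invocation of $\dim_H S \leq n-2$; while standard in the literature on nodal sets, it is external to this paper's machinery. If a self-contained argument is desired, one can instead note that $\Sigma$ is a real-analytic variety of dimension at most $n-1$ whose singular locus (in the sense of analytic varieties) has strictly smaller dimension, which likewise guarantees connectedness of the complement in $\RR^n$. The remaining ingredients---the strong maximum/minimum principle and unique continuation for harmonic functions---are entirely elementary.
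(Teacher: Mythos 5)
Your proof is correct, but it takes a genuinely different route from the paper's. The paper argues combinatorially: it builds a graph whose vertices are the components of $\RR^n\setminus\Sigma_f$, with edges at boundary points where exactly two components meet, shows this graph is bipartite (via the mean value property) and connected (via density of smooth points of $\Sigma_f$, which in turn uses the dimension bound on the singular set), and then invokes the rigidity of two-colorings of connected bipartite graphs. You instead work directly in $\RR^n$: you delete the nodal critical set $S$, use $\dim_H S\leq n-2$ to keep $\RR^n\setminus S$ connected, prove via the implicit function theorem, the strong minimum principle, and unique continuation that both $f$ and $g$ change sign across each regular point of $\Sigma$, and conclude that $\sign(fg)$ extends to a locally constant, hence constant, function on $\RR^n\setminus S$; restricting to $\RR^n\setminus\Sigma$ gives the dichotomy. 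Both arguments rest on the same essential input --- the $(n-2)$-dimensional bound on the singular set of a harmonic function --- so your concern that this is external to the paper's machinery is unfounded: the paper invokes exactly this fact (citing Naber--Valtorta) to get density of smooth points. What your approach buys is the elimination of the graph formalism and of the density argument, at the cost of the (standard, but worth stating) topological fact that a closed set of Hausdorff dimension less than $n-1$ does not disconnect $\RR^n$, and of slightly more care in checking that $\epsilon$ is well defined and locally constant at regular points of $\Sigma$ (which you do correctly: since $f$ and $g$ each change sign across the local hypersurface, the product $fg$ has one sign on both sides). The paper's graph argument, by contrast, only ever compares signs across shared boundary faces and so never needs the connectivity of $\RR^n\setminus S$ itself.
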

\begin{proof} Since the conclusion is trivial if $f$ is identically zero, we may assume in addition to the hypothesis that $f$ is not identically zero. According to \cite[Theorem 1.1]{lm-ratios}, if $u$ and $v$ are harmonic functions defined on a domain $\Omega\subseteq\RR^n$ whose zero sets satisfy $\Sigma_v\subseteq \Sigma_u$, then there exists a real-analytic function $\alpha$ in $\Omega$ such that $u=\alpha v$. Invoking this fact twice, we obtain that $f=\alpha g=\alpha\beta f$, where $\alpha$ and $\beta$ are real analytic functions on $\RR^n$. Since $f$ is not identically zero, it follows that $\alpha\beta=1$ on $\RR^n$. In particular, $\sign(\alpha)=\pm1$ on $\RR^n$. Therefore, $\sign(f)=\sign(\alpha)\sign(g)=\pm\sign(g)$ on $\RR^n$.
\end{proof}

The following lemma indicates that zero sets of homogeneous harmonic polynomials of different degrees are uniformly separated on balls centered at the origin. This answers affirmatively a question posed in \cite[Remark 4.12]{badgerflatpoints}.

\begin{lemma}\label{zerosetsarefarapart} For all $n\geq 2$ and $1\leq j<k$, there exists a constant $\varepsilon>0$ such that for all $\Sigma_p\in\cF_{n,k}$ and $\Sigma_q\in\cF_{n,j}$,
$$\mD{0}{r}[\Sigma_p,\Sigma_q] = \frac{1}{r}\max\left\{\sup_{x\in \Sigma_p\cap B(0,r)}\dist(x,\Sigma_q), \sup_{y\in\Sigma_q\cap B(0,r)} \dist(y,\Sigma_p)\right\} \geq \varepsilon\quad\text{for all }r>0.$$
\end{lemma}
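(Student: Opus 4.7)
The plan is to argue by contradiction, combining a normal families argument with a Dirichlet eigenvalue comparison on the sphere. First, observe that since $\Sigma_p$ and $\Sigma_q$ are cones for any $\Sigma_p\in\cF_{n,k}$ and $\Sigma_q\in\cF_{n,j}$, a direct rescaling computation shows that $\mD{0}{r}[\Sigma_p,\Sigma_q]$ is in fact independent of $r>0$, so it suffices to consider $r=1$. Suppose for contradiction that the lemma fails; then there exist sequences of homogeneous harmonic polynomials $p_i$ of degree $k$ and $q_i$ of degree $j$ with $\mD{0}{1}[\Sigma_{p_i},\Sigma_{q_i}]\to 0$. Normalizing by $H(p_i)$ and $H(q_i)$ leaves the zero sets unchanged, so one may assume $H(p_i)=H(q_i)=1$. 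Because the space of homogeneous harmonic polynomials of each fixed degree is finite-dimensional, passing to subsequences yields $p_i\to p$ and $q_i\to q$ in coefficients, with $p$, $q$ nonzero homogeneous harmonic polynomials of degrees $k$ and $j$ respectively. Lemma \ref{convergenceofzerosets} gives $\Sigma_{p_i}\to\Sigma_p$ and $\Sigma_{q_i}\to\Sigma_q$ in the Attouch--Wets topology, and because $\mD{0}{1}$ satisfies the triangle inequality, one deduces $\mD{0}{1}[\Sigma_p,\Sigma_q]=0$. Since $\Sigma_p$ and $\Sigma_q$ are closed cones, this forces $\Sigma_p=\Sigma_q$.

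The task reduces to showing that two nonzero homogeneous harmonic polynomials $p$ and $q$ of distinct degrees $k > j \geq 1$ cannot have equal zero sets. Pick any connected component $\Omega$ of $\RR^n \setminus \Sigma_p = \RR^n \setminus \Sigma_q$; since $\Sigma_p$ is a cone, so is $\Omega$, and $\omega := \Omega \cap S^{n-1}$ is a nonempty, connected, relatively open subset of the unit sphere. By Lemma \ref{l:2side}, both $p$ and $q$ have constant sign on $\Omega$, so after replacing $q$ by $-q$ if necessary one may assume both are strictly positive on $\Omega$. The classical separation-of-variables identity in polar coordinates shows that the restrictions $\tilde p := p|_{S^{n-1}}$ and $\tilde q := q|_{S^{n-1}}$ are smooth Laplace--Beltrami eigenfunctions on $S^{n-1}$ with eigenvalues $\lambda_k := k(k+n-2)$ and $\lambda_j := j(j+n-2)$ respectively, strictly positive on $\omega$ and vanishing continuously on $\partial\omega \subseteq S^{n-1} \cap \Sigma_p$.

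The contradiction now follows from Green's identity on $\omega$. Since $\tilde p$ is smooth on $S^{n-1}$ with $\tilde p|_{\partial\omega} = 0$, the truncations $(\tilde p - \varepsilon)_+ := \max(\tilde p - \varepsilon, 0)$ are Lipschitz and have compact support in $\omega$ for every $\varepsilon > 0$, and they converge to $\tilde p$ in $H^1(\omega)$ as $\varepsilon\downarrow 0$ by dominated convergence; hence $\tilde p|_\omega \in H^1_0(\omega)$, and likewise $\tilde q|_\omega \in H^1_0(\omega)$. Using $\tilde q$ as a test function in the weak formulation of $-\Delta_{S^{n-1}}\tilde p = \lambda_k \tilde p$ yields
\begin{equation*}
\int_\omega \nabla \tilde p \cdot \nabla \tilde q \, dS = \lambda_k \int_\omega \tilde p \tilde q \, dS,
\end{equation*}
and the symmetric calculation using $\tilde p$ as a test function in the equation for $\tilde q$ gives the same left-hand side equal to $\lambda_j \int_\omega \tilde p \tilde q \, dS$. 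Because $\tilde p, \tilde q > 0$ on $\omega$, the integral $\int_\omega \tilde p \tilde q \, dS$ is strictly positive and may be cancelled, forcing $\lambda_k = \lambda_j$. Since $t \mapsto t(t+n-2)$ is strictly increasing for $t \geq 1$ when $n \geq 2$, this implies $k = j$, contradicting $j < k$. The most subtle step is verifying $\tilde p|_\omega, \tilde q|_\omega \in H^1_0(\omega)$ despite the possibly rough boundary $\partial\omega$, which can contain the singular set of $\Sigma_p$; the truncation approximation avoids any boundary regularity assumption on $\omega$.
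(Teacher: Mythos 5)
Your reduction is the same as the paper's: rescale to $r=1$ using conicality, argue by contradiction, normalize heights, extract limits $p_i\to p$, $q_i\to q$ in coefficients with $\Sigma_{p_i}\to\Sigma_p$, $\Sigma_{q_i}\to\Sigma_q$ via Lemma \ref{convergenceofzerosets}, and conclude $\Sigma_p=\Sigma_q$. One small repair is needed there: the relative Walkup--Wets distance $\mD{0}{1}$ does \emph{not} satisfy the triangle inequality (the appendix of the paper makes exactly this point; only the weak and strong quasitriangle inequalities of Lemma \ref{l:Dprop} hold). The paper therefore applies the weak quasitriangle inequality twice to get $\mD{0}{1/4}[\Sigma_p,\Sigma_q]=0$ at a smaller radius and then uses that both zero sets are cones to upgrade to $\Sigma_p=\Sigma_q$; your argument should be phrased the same way, and with that change the step goes through verbatim.

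Where you genuinely diverge is in ruling out $\Sigma_p=\Sigma_q$ for homogeneous harmonic $p,q$ of degrees $k>j$. The paper uses the full strength of Lemma \ref{l:2side} (the bipartite-graph sign rigidity) to conclude that $pq$ has a single sign on all of $\RR^n$, so $\int_{S^{n-1}}pq\,d\sigma\neq 0$, contradicting $L^2(S^{n-1})$-orthogonality of spherical harmonics of different degrees. You instead work on a single component $\Omega$ of the complement, observe that $\tilde p=p|_{S^{n-1}}$ and $\tilde q=q|_{S^{n-1}}$ are Laplace--Beltrami eigenfunctions with eigenvalues $k(k+n-2)$ and $j(j+n-2)$, show $\tilde p|_\omega,\tilde q|_\omega\in H^1_0(\omega)$ by the truncation argument (which is correct: $\{\tilde p\geq\varepsilon\}\cap\overline\omega$ is compact and avoids $\partial\omega$, and dominated convergence gives $H^1$ convergence since $\tilde p>0$ on $\omega$), and then test each eigenvalue equation against the other function to force $\lambda_k=\lambda_j$, hence $k=j$. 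This is correct, and in fact needs less than the paper's route: constancy of sign of $p$ and $q$ on the single component $\Omega$ is immediate from connectedness, so you do not need the global sign coherence of Lemma \ref{l:2side} at all, nor the orthogonality of spherical harmonics. The price is the $H^1_0$ bookkeeping on a possibly rough spherical domain $\omega$, which you handle properly; the paper's argument is shorter once Lemma \ref{l:2side} is available, while yours is more local and self-contained at this step.
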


\begin{proof} Note that $\lambda\Sigma_p=\Sigma_p$ and  $\lambda\Sigma_q=\Sigma_q$ for all $\lambda>0$ whenever $\Sigma_p\in\cF_{n,k}$ and $\Sigma_q\in\cF_{n,j}$. Hence $\mD{0}{r}[\Sigma_p,\Sigma_q]=\mD{0}{1}[r^{-1}\Sigma_p,r^{-1}\Sigma_q]=\mD{0}{1}[\Sigma_p,\Sigma_q]$ for all $r>0$, whenever $n\geq 2$, $1\leq j<k$, $\Sigma_p\in\cF_{n,k}$, and $\Sigma_q\in\cF_{n,j}$. Thus, it suffices to prove the claim with $r=1$.

Assume to the contrary that for some $n\geq 2$ and $1\leq j<k$ we can find sequences $p_1,p_2,\dots \in \cF_{n,k}$ and $q_1,q_2,\dots\in\cF_{n,j}$ such that  \begin{equation}\label{e:pi-qi}\mD{0}{1}[\Sigma_{p_i},\Sigma_{q_i}]\leq \frac{1}{i}\quad\text{for all }i\geq 1.\end{equation}
By Corollary \ref{c:closed}, passing to subsequences (which we relabel), we may assume that there exist $\Sigma_p\in \cF_{n,k}$ and $\Sigma_q\in\cF_{n,j}$ such that $\Sigma_{p_i}\rightarrow \Sigma_p$ and $\Sigma_{q_i}\rightarrow\Sigma_q$. Moreover, replacing each $p_i$ and $q_i$ by $p_i/H(p_i)$ and $q_i/H(q_i)$, respectively, and passing to further subsequences (which we again relabel), we may assume that $p_i\rightarrow p$ in coefficients and $q_i\rightarrow q$ in coefficients, where $p$ and $q$ are homogeneous harmonic polynomials of degree $k$ and $j$, respectively. By two applications of the weak quasitriangle inequality (see Appendix \ref{sect:2}),
\begin{align}\label{quasi-triang-ineq}
 \mD{0}{1/4}[\Sigma_p,\Sigma_q] &\leq 2\mD{0}{1/2}[\Sigma_p,\Sigma_{p_i}] + 2\mD{0}{1/2}[\Sigma_{p_i},\Sigma_q]
\\ &\leq 2\mD{0}{1/2}[\Sigma_p,\Sigma_{p_i}]+4\mD{0}{1}[\Sigma_{p_i},\Sigma_{q_i}] + 4 \mD{0}{1}[\Sigma_{q_i},\Sigma_q].\nonumber
 \end{align}
 Letting $i\rightarrow\infty$, we have the first term vanishes since $\Sigma_{p_i}\rightarrow\Sigma_p$, the second term vanishes by \eqref{e:pi-qi}, and the the third term vanishes since $\Sigma_{q_i}\rightarrow \Sigma_q$. Hence $\mD{0}{1/4}[\Sigma_{p},\Sigma_{q}]=0$, which implies $\Sigma_p\cap B(0,1/4)=\Sigma_q\cap B(0,1/4)$. But $\Sigma_p$ and $\Sigma_q$ are cones, so in fact $\Sigma_p=\Sigma_q$.
By Lemma \ref{l:2side}, the functions $p$ and $q$ take the same or the opposite sign simultaneously on every connected component of $\RR^n\setminus\Sigma_p = \RR^n\setminus\Sigma_q$. Hence either $p(x)q(x)\geq 0$ for all $x\in\RR^n$ or $p(x)q(x)\leq 0$ for all $x\in\RR^n$. It follows that either $\int_{S^{n-1}} pq\,d\sigma>0$ or $\int_{S^{n-1}} pq\,d\sigma <0$. This contradicts the fact that homogeneous harmonic polynomials of different degrees are orthogonal in $L^2(S^{n-1})$ (e.g.~see \cite[Proposition 5.9]{HFT}).
\end{proof}

We now show that $\hz_k$ cannot grow arbitrarily large as $\Theta^{\cH_{n,k}}_{\Sigma_p}$ becomes arbitrarily small; cf.~ \cite[Proposition 4.8]{badgerflatpoints}.

\begin{lemma}\label{ifthetaissmallthensoiszeta} For all $n\geq 2$ and $1\leq k<d$ there is $\delta_{n,d,k}>0$ with the following property. If $p:\RR^n\rightarrow\RR$ is a harmonic polynomial of degree $d$ and $\Theta^{\cH_{n,k}}_{\Sigma_p}(x,r)<\delta_{n,d,k}$ for some $x\in\Sigma_p$ and $r>0$, then $\widehat{\zeta}_k(p,x,r)<\delta_{n,d,k}^{-1}$.
\end{lemma}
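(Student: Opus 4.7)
The proof is by compactness and contradiction. Suppose no such $\delta_{n,d,k}$ exists. Then for each integer $m \geq 1$ there are a harmonic polynomial $p_m:\RR^n\to\RR$ of degree $d$, a point $x_m \in \Sigma_{p_m}$, and a scale $r_m > 0$ with $\Theta^{\cH_{n,k}}_{\Sigma_{p_m}}(x_m, r_m) < 1/m$ yet $\widehat{\zeta}_k(p_m, x_m, r_m) \geq m$. Replacing $p_m$ by $y\mapsto p_m(x_m + r_m y)$ — a harmonic polynomial of the same degree with $0$ in its zero set — both quantities are preserved (the scale-invariance of $\Theta$ was noted in the introduction, translation invariance is built in, and the corresponding statement for $\widehat{\zeta}_k$ is immediate from the Taylor expansion together with the $i$-homogeneity of the $p^{(x)}_i$), so I may assume $x_m = 0$ and $r_m = 1$. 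Decompose $p_m = h_m + \tilde p_m$ with $h_m = p^{(0)}_{m,d} + \cdots + p^{(0)}_{m,k+1}$ and $\tilde p_m = p^{(0)}_{m,k} + \cdots + p^{(0)}_{m,1}$, the constant term vanishing because $p_m(0)=0$. Since multiplying $p_m$ by a positive scalar preserves $\Sigma_{p_m}$, $\Theta^{\cH_{n,k}}_{\Sigma_{p_m}}$, and $\widehat{\zeta}_k$, I further normalize to $\|h_m\|_{L^\infty(B(0,1))} = 1$. By Corollary \ref{c:height} one then has $\max_{k<j\leq d}\|p^{(0)}_{m,j}\|_{L^\infty(B(0,1))} \sim_{n,d} 1$, and the hypothesis $\widehat{\zeta}_k(p_m,0,1) \geq m$ forces $\|\tilde p_m\|_{L^\infty(B(0,1))} \lesssim 1/m$.

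Lemma \ref{l:height} bounds the coefficients of $p_m$ uniformly, while $\tilde p_m \to 0$ in coefficients. Passing to a subsequence, $p_m \to h$ in coefficients, where $h$ is a harmonic polynomial of degree $\leq d$ whose homogeneous parts of degree $1,\ldots,k$ at $0$ all vanish, $h(0)=0$, and $\|h\|_{L^\infty(B(0,1))} = 1$. In particular $h$ is nonconstant, so $\Sigma_h \in \cH_{n,d}$, and Lemma \ref{convergenceofzerosets} gives $\Sigma_{p_m} \to \Sigma_h$ in the Attouch--Wets topology. On the other hand, the assumption $\Theta^{\cH_{n,k}}_{\Sigma_{p_m}}(0,1) < 1/m$ produces $\Sigma_{q_m} \in \cH_{n,k}$ with $\mD{0}{1}[\Sigma_{p_m}, \Sigma_{q_m}] < 1/m$. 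Normalizing $H(q_m) = 1$ and passing to a further subsequence, $q_m \to q$ in coefficients with $H(q) = 1$, so $\Sigma_q \in \cH_{n,k}$ and $\Sigma_{q_m} \to \Sigma_q$ (Lemma \ref{convergenceofzerosets}). The quasitriangle-inequality chain from the proof of Lemma \ref{zerosetsarefarapart} (i.e.\ exactly the estimate \eqref{quasi-triang-ineq}) now yields $\mD{0}{1/4}[\Sigma_h, \Sigma_q] = 0$, hence $\Sigma_h \cap B(0,1/4) = \Sigma_q \cap B(0,1/4)$.

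The contradiction comes from comparing leading-order homogeneous parts at $0$. Let $j^*$ be the lowest degree of a nonzero homogeneous part of $h$ and $i^*$ the analogous degree for $q$; by construction $j^* \geq k+1 > k \geq i^*$. Set $h_\lambda(y) := h(\lambda y)/\lambda^{j^*}$ and $q_\lambda(y) := q(\lambda y)/\lambda^{i^*}$; then $h_\lambda \to h^{(0)}_{j^*}$ and $q_\lambda \to q^{(0)}_{i^*}$ in coefficients as $\lambda \downarrow 0$, and Lemma \ref{convergenceofzerosets} upgrades these convergences to $\lambda^{-1}\Sigma_h \to \Sigma_{h^{(0)}_{j^*}} \in \cF_{n,j^*}$ and $\lambda^{-1}\Sigma_q \to \Sigma_{q^{(0)}_{i^*}} \in \cF_{n,i^*}$ in Attouch--Wets. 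Because $\Sigma_h$ and $\Sigma_q$ already agree on $B(0,1/4)$, their dilates agree on $B(0, 1/(4\lambda))$, and in the limit $\Sigma_{h^{(0)}_{j^*}} = \Sigma_{q^{(0)}_{i^*}}$ as closed subsets of $\RR^n$. This contradicts Lemma \ref{zerosetsarefarapart}, according to which homogeneous harmonic zero sets of distinct degrees $i^* < j^*$ are uniformly bilaterally separated on every ball centered at the origin.

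The main obstacle is organizing the two layers of limiting cleanly: the first limit $p_m \to h$ is where $\widehat{\zeta}_k(p_m,0,1) \to \infty$ is consumed to annihilate the low-order Taylor part of the limit at $0$, and a separate blow-up is then required to extract homogeneous leading terms of $h$ and $q$, so that the single-scale coincidence $\Sigma_h \cap B(0,1/4) = \Sigma_q \cap B(0,1/4)$ can be promoted to an equality of honest elements of $\cF_{n,j^*}$ and $\cF_{n,i^*}$ to which Lemma \ref{zerosetsarefarapart} applies.
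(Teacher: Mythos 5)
Your proof is correct and takes essentially the same route as the paper: a compactness-and-contradiction argument that normalizes, passes to limits in coefficients and in the Attouch--Wets topology, uses $\widehat{\zeta}_k(p_m,0,1)\to\infty$ to kill the low-order part of the limit, matches the limit zero set with a degree-at-most-$k$ zero set on a small ball, and blows up to homogeneous leading parts to contradict Lemma \ref{zerosetsarefarapart}. The only cosmetic differences are your normalization $\|h_m\|_{L^\infty(B(0,1))}=1$ (so the vanishing of the low-order part follows from Corollary \ref{c:height} rather than from Lemma \ref{zetacts} and the height argument) and your extraction of the approximants $\Sigma_{q_m}$ scale-by-scale instead of invoking the limit property of $\Theta$ together with the closedness of $\cH_{n,k}$.
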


\begin{proof}

Let $n\geq 2$ and $1\leq k<d$ be given. Suppose in order to reach a contradiction that for all $j\geq 1$ there exists a harmonic polynomial $p_j:\RR^n\rightarrow\RR$ of degree $d$, $x_j\in\Sigma_{p_j}$, and $r_j>0$ such that $\Theta^{\cH_{n,k}}_{\Sigma_{p_j}}(x_j,r_j)<1/j$, but $\widehat{\zeta}_k(p_j,x_j,r_j)\geq j$. Replacing each $p_j$ with $\tilde p_j$, $$\tilde p_j(y) = H(p_j)^{-1}\cdot p(r_j(y+x_j))\quad\text{for all }y\in\RR^n,$$ that is, left translating by $x_j$, dilating by $1/r_j$, and scaling by $1/H(p_j)$, we may assume without loss of generality that $x_j=0$, $r_j=1$, and $H(p_j)=1$ for all $j\geq 1$. Therefore, there exists a sequence $(p_j)_{j=1}^\infty$ of harmonic polynomials in $\RR^n$ of degree $d$ and height 1 with $p_j(0)=0$ such that $\Theta_{\Sigma_{p_j}}^{\cH_{n,k}}(0,1)\leq 1/j$, and $\widehat{\zeta}_k(p_j,0,1)\geq j$. Passing to a subsequence, we may assume that $p_j\rightarrow p$ in coefficients to some harmonic polynomial $p:\RR^n\rightarrow\RR$ with height 1. By Lemma \ref{convergenceofzerosets}, $\Sigma_{p_j}\rightarrow \Sigma_p$, as well. On one hand, \begin{equation}\label{e:small-1}\Theta_{\Sigma_p}^{\cH_{n,k}}(0,1/2) \leq 2 \liminf_{j\rightarrow\infty} \Theta_{\Sigma_{p_j}}^{\cH_{n,k}}(0,1)=0.\end{equation} (For a primer on the interaction of limits and approximation numbers, see Appendix \ref{sect:2}.) On the other hand, by Lemma \ref{l:height} and the fact that $\widehat{\zeta}_k(p_j,0,1)\geq j$, it must be that the height of the polynomial $p_j$ is obtained from the coefficient of some term of $p_j$ of degree at least $k+1$, provided that $j$ is sufficiently large. In particular, we conclude that $p$ has degree at least $k+1$. Hence $\widehat{\zeta}_k(p,0,1)$ is well defined and $\widehat{\zeta}_k(p,0,1)=\lim_{j\rightarrow\infty}\widehat{\zeta}_k(p_j,0,1)=\infty$ by Lemma \ref{zetacts}. Thus, the low order part of $p$ at $0$ (that is the terms of degree at most $k$) vanishes and $p$ has the form \begin{equation}\label{e:small-2} p=p^{(0)}_{d}+p^{(0)}_{d-1}+\dots +\dots+p^{(0)}_{i},\quad p^{(0)}_{i}\neq 0\text{ for some }i\geq k+1.\end{equation} We shall now show that \eqref{e:small-1} and \eqref{e:small-2} are incompatible with Lemma \ref{zerosetsarefarapart}:

By \eqref{e:small-1}, there exists $\Sigma_q\in\cl{\cH_{n,k}}=\cH_{n,k}$ such that $\Sigma_p\cap B(0,1/2)=\Sigma_q\cap B(0,1/2)$, say \begin{equation}\label{e:small-3} q=q^{(0)}_k+q^{(0)}_{k-1}+\dots+q^{(0)}_l,\quad q^{(0)}_l\neq 0\text{ for some }1\leq l\leq k.\end{equation} Choose any sequence $r_m\downarrow 0$ as $m\rightarrow\infty$.  By \eqref{e:small-2}, $r_m^{-i} p(r_m\cdot)\rightarrow p^{(0)}_i$ in coefficients and
by \eqref{e:small-3},
$r_m^{-l} q(r_m\cdot)\rightarrow q^{(0)}_l$ in coefficients also. Hence $r_m^{-1}\Sigma_p = \Sigma_{r_m^{-i}p(r_m\cdot)}\rightarrow \Sigma_{p^{(0)}_i}\in \cF_{n,i}$ and $r_m^{-1}\Sigma_q = \Sigma_{r_m^{-l}p(r_m\cdot)}\rightarrow \Sigma_{q^{(0)}_l}\in \cF_{n,l}$ by Lemma \ref{convergenceofzerosets}. By the weak quasitriangle inequality (applied twice as in \eqref{quasi-triang-ineq}), $$\mD{0}{1}\left[\Sigma_{p_i^{(0)}},\Sigma_{q_i^{(0)}}\right] \leq 2 \mD{0}{2}\left[\Sigma_{p_i^{(0)}},r_m^{-1}\Sigma_p\right]+4\mD{0}{4}\left[r_m^{-1}\Sigma_p,r_m^{-1}\Sigma_q\right]+4\mD{0}{4}\left[r_m^{-1}\Sigma_q,\Sigma_{q_l^{(0)}}\right].$$ As $m\rightarrow\infty$, the first and the last term vanish, because $r_m^{-1}\Sigma_p \rightarrow \Sigma_{p^{(0)}_i}$ and $r_m^{-1}\Sigma_q \rightarrow \Sigma_{q^{(0)}_l}$, respectively. Thus, \begin{equation*}\mD{0}{1}\left[\Sigma_{p_i^{(0)}},\Sigma_{q_l^{(0)}}\right] \leq \liminf_{m\rightarrow\infty} 4\mD{0}{4}\left[r_m^{-1}\Sigma_p,r_m^{-1}\Sigma_q\right] = \liminf_{m\rightarrow\infty} 4\mD{0}{4r_m}[\Sigma_p,\Sigma_q]=0,\end{equation*} where the ultimate equality holds because $\Sigma_p\cap B(0,1/2)=\Sigma_q\cap B(0,1/2)$ and $4r_m\downarrow 0$. But $\mD{0}{1}\left[\Sigma_{p_i^{(0)}},\Sigma_{q_l^{(0)}}\right]>0$ by Lemma \ref{zerosetsarefarapart}, because $\Sigma_{p_i^{(0)}}\in\cF_{n,i}$, $\Sigma_{q_l^{(0)}}\in \cF_{n,l}$, and $i>l$. We have reached a contradiction. Therefore, for all $n\geq 2$ and $1\leq k<d$, there exists $j\geq 1$ such that if $p:\RR^n\rightarrow\RR$ is a harmonic polynomial of degree $d$ and $\Theta_{\Sigma_p}^{\cH_{n,k}}(x,r)<1/j$ for some $x\in\Sigma_p$ and $r>0$, then $\widehat{\zeta}_k(p,x,r)<j$.\end{proof}

We now have all the ingredients required to prove Theorem \ref{t:main2}.

\begin{proof}[Proof of Theorem \ref{t:main2}] Given $n\geq 2$ and $1\leq k<d$, let $\delta_{n,d,k}>0$ denote the constant from Lemma \ref{ifthetaissmallthensoiszeta}. Let $p:\RR^n\rightarrow\RR$ be a harmonic polynomial of degree $d$ and let $x\in\Sigma_p$. Write $\tilde p= p^{(x)}_k+ \dots + p^{(x)}_1$ for the part of $p$ of terms of degree at most $k$, so that $\partial^\alpha p(x)\neq 0$ for some $|\alpha|\leq k$ if and only if $\tilde p \not\equiv 0$. On one hand, if $\tilde p\not\equiv 0$, then $\widehat{\zeta}_k(p,x,1)<\infty$, whence $$\Theta_{\Sigma_p}^{\cH_{n,k}}(x,r) \lesssim_{n,d} \widehat{\zeta}_k(p,x,r)^{1/k} \lesssim_{n,d} r^{1/k}\widehat{\zeta}_k(p,x,1)^{1/k}\rightarrow 0\quad\text{as }r\rightarrow 0$$ by Lemma \ref{thetacontrolledbyzeta} and Lemma \ref{l:quasilinear}. In particular, if $\tilde p\not\equiv 0$, then $\Theta_{\Sigma_p}^{\cH_{n,k}}(x,r)<\delta_{n,d,k}$ for some $r>0$. On the other hand, if $\Theta_{\Sigma_p}^{\cH_{n,k}}(x,r)<\delta_{n,d,k}$ for some $r>0$, then \begin{equation}\label{e:zeta-delta}\widehat{\zeta}_k(p,x,r)<\delta_{n,d,k}^{-1}<\infty\end{equation} by Lemma \ref{ifthetaissmallthensoiszeta}, whence $\tilde p\not\equiv 0$. Moreover, in this case, \begin{equation*}\Theta_{\Sigma_p}^{\cH_{n,k}}(x,sr) \lesssim_{n,d} \widehat{\zeta}_k(p,x,sr)^{1/k} \lesssim_{n,d} s^{1/k}\widehat{\zeta}_k(p,x,r)^{1/k} \lesssim_{n,d,k} s^{1/k}\quad\text{for all }s\in(0,1)\end{equation*} by Lemma \ref{thetacontrolledbyzeta}, Lemma \ref{l:quasilinear}, and \eqref{e:zeta-delta}.\end{proof}

 \begin{proof}[Proof of Corollary \ref{c:main2}] From \eqref{e:detect} in Theorem \ref{t:main2}, it immediately follows that $\cH_{n,k}$ points are $(\phi,\Phi)$ detectable in $\cH_{n,d}$ for $\phi=\min\{\delta_{n,k+1,k},\dots,\delta_{n,d,k}\}>0$ and some function $\Phi$ of the form $\Phi(s)=Cs^{1/k}$ for all $s\in(0,1)$ (see Definition \ref{d:Tpdp}).
\end{proof}

\section{Structure of sets locally bilaterally well approximated by $\cH_{n,d}$}

Now that we know $\cH_{n,k}$ points are detectable in $\cH_{n,d}$, we may obtain Theorem \ref{t:main1} from repeated use of Theorem \ref{t:open}.

\begin{proof}[Proof of Theorem \ref{t:main1}]
Let $n\geq 2$ and $d\geq 2$ be given. By Remark \ref{r:lac} and Corollary \ref{c:closed}, $\cH_{n,k}$ and $\cF_{n,k}$ are closed local approximation classes and $\cH_{n,k}$ is also translation invariant for all $k\geq 1$. Thus, we may freely make use the technology in \S\S A.3--A.5 of the appendix. Using Definition \ref{def-perp}, Theorem \ref{t:main2} yields
 $$\cH_{n,k}\cap \cH_{n,k-1}^\perp = \{\Sigma_p\in\cH_{n,k}:\liminf_{r\downarrow 0}\Theta^{\cH_{n,k-1}}_{\Sigma_p}(0,r)>0\}=\cF_{n,k}\quad\text{for all }k\geq 2.$$

Suppose that $A\subseteq\RR^n$ is locally bilaterally well approximated by $\cH_{n,d}$ and put $U_d=A$. Since $\cH_{n,d-1}$ points are detectable in $\cH_{n,d}$ (by Corollary \ref{c:main2}) and $U_d$ is locally bilaterally well approximated by $\cH_{n,d}$, by Theorem \ref{t:open} we can write $$U_d=(U_d)_{\cH_{n,d-1}}\cup (U_d)_{\cH_{n,d-1}^\perp}=:U_{d-1}\cup A_d,$$ where $U_{d-1}$ and $A_d$ are disjoint, $U_{d-1}$ is relatively open in $U_d$, $U_{d-1}$ is locally bilaterally well approximated by $\cH_{n,d-1}$, and $U_d$ is locally bilaterally well approximated \emph{along} $A_d$ by $\cH_{n,d}\cap \cH_{n,d-1}^\perp=\cF_{n,d}$, that is, $\limsup_{r\downarrow 0}\sup_{x\in K}\Theta^{\cF_{n,d}}_{U_d}(x,r)=0$ for every compact set $K\subseteq A_d$. In particular, the latter property implies that every $x\in A_d$ is an $\cF_{n,d}$ point of $U_d$ by Theorem \ref{t:tangent-well}. Next, since $\cH_{n,d-2}$ points are detectable in $\cH_{n,d-1}$, we may repeat the argument, \emph{mutatis mutandis}, to write $$U_{d-1}=(U_{d-1})_{\cH_{n,d-2}}\cup (U_{d-1})_{\cH_{n,d-2}^\perp}=: U_{d-2}\cup A_{d-1},$$ where $U_{d-2}$ and $A_{d-1}$ are disjoint, $U_{d-2}$ is relatively open in $U_{d-1}$, $U_{d-2}$ is locally bilaterally well approximated by $\cH_{n,d-2}$, $U_{d-1}$ is locally bilaterally well approximated along $A_{d-1}$ by $\cF_{n,d-1}$, and every $x\in A_{d-1}$ is an $\cF_{n,d-1}$ point of $U_{d-1}$. In fact, since $U_{d-1}$ is relatively open in $U_d$, we have $U_{d-2}$ is relatively open in $U_d$, $U_d$ is locally bilaterally well approximated along $A_{d-1}$ by $\cF_{n,d-1}$, and every $x\in A_{d-1}$ is an $\cF_{n,d-1}$ point of $U_d$, as well. After a finite number of repetitions, this argument shows that $$A=U_d=U_{d-1}\cup A_d=\dots=U_1\cup A_2\cup\dots\cup A_d,$$ where the sets $U_1,A_2,\dots,A_d$ are pairwise disjoint, $U_1$ is relatively open in $A$, $U_1$ is locally bilaterally well approximated by $\cH_{n,1}$, $U_k=U_1\cup A_2\cup\dots\cup A_k$ is relatively open in $A$ for all $2\leq k\leq d$, $U_k$ is locally bilaterally well approximated by $\cH_{n,k}$ for all $2\leq k\leq d$, $A$ is locally bilaterally well approximated along $A_k$ by $\cF_{n,k}$ for all $2\leq k\leq d$, and every $x\in A_k$ is an $\cF_{n,k}$ point of $A$ for all $2\leq k\leq d$. Finally, assign $A_1=U_1$. Since $A_1$ relatively open in $A$, $A_1$ is locally bilaterally well approximated by $\cH_{n,1}$, and $\cH_{n,1}=\cF_{n,1}$, we conclude that every $x\in A_1$ is a $\cF_{n,1}$ point of $A$ by Theorem \ref{t:tangent-well}. This verifies (i)--(iv) of Theorem \ref{t:main1} and (v) follows immediately from (ii) and (iii).

Next, we want to prove that $A_1$ is relatively dense in $A$. Suppose that $x\in A\setminus A_1$, say $x\in A_k$ for some $k\geq 2$. To find points in $A_1$ nearby $x$, we will rely on the following fact: By Remark \ref{r:detectx}, since $\cH_{n,1}$ points are detectable in $\cH_{n,d}$, there exist $\alpha,\beta>0$ such that \begin{equation}\begin{split}\label{e:m1-0} &\hbox{if $\Theta^{\cH_{n,d}}_A(y,r')<\alpha$ for all $0<r'\leq r$}\\ &\qquad\qquad \hbox{and $\Theta_A^{\cH_{n,1}}(y,r)<\beta$ for some $y\in A$ and $r>0$, then $y\in A_1$}.\end{split}\end{equation} To proceed, since $x$ is an $\cF_{n,k}$ point of $A$ and $\cF_{n,k}$ is closed, we can find a homogeneous harmonic polynomial $p:\RR^n\rightarrow\RR$  and sequence of scales $r_i\downarrow 0$ such that $r_i^{-1}(\overline{A}-x)\rightarrow \Sigma_p$ in the Attouch-Wets topology ($\Sigma_p$ is a tangent set of $\overline{A}$ at $x$). Pick any $z\in \Sigma_p$ such that $|Dp|(z)\neq 0$. (That we can always find such a point is evident, because the singular set of a polynomial has dimension at most $n-2$, while $\dim \Sigma_p=n-1$.) Then $\lim_{s\downarrow 0}\Theta^{\cH_{n,1}}_{\Sigma_p}(z,s)=0$ by Theorem \ref{t:main2}. In particular, there exists $s_1>0$ such that \begin{equation}\label{e:m1-1}\Theta^{\cH_{n,1}}_{\Sigma_p}(z,\tfrac{3}{2}s_1)\leq \beta/18.\end{equation}
Since $r_i^{-1}(\overline{A}-x)\rightarrow \Sigma_p$, there exist $y_i\in \overline{A}$ such that $z_i:=(y_i-x)/r_i\rightarrow z$. Replacing each $y_i$ with $y_i'\in A$ such that $|y'_i-y_i|\leq r_i/i$, say, we may assume without loss of generality that $y_i\in A$ for all $i$ (because $\mD{0}{r}\left[r_i^{-1}(\overline{A}-y'_i),r_i^{-1}(\overline{A}-y_i)\right]\leq 1/ri\rightarrow 0$ for all $r>0$). Necessarily, $y_i\rightarrow x$, and thus, there exists $s_2>0$ such that \begin{equation}\label{e:m1-2} \sup_{i\geq 1}\Theta_A^{\cH_{n,d}}(y_i,s)\leq \alpha/2<\alpha\quad\text{for all }s\leq s_2,\end{equation} because $A$ is locally bilaterally well approximated by $\cH_{n,d}$. Now, by quasimonotonicity of bilateral approximation numbers (see Lemma \ref{l:tprop}) and \eqref{e:m1-1}, $$\Theta_{\Sigma_p}^{\cH_{n,1}}(z_i,\tfrac{1}{2}s_1) \leq 2t+2(1+t)\Theta_{\Sigma_p}^{\cH_{n,1}}(z,(1+t)s_1) \leq 2t+3\Theta_{\Sigma_p}^{\cH_{n,1}}(z,\tfrac{3}{2}s_1)\leq 2t+\beta/6$$ whenever $|z_i-z|\leq ts_1\leq \tfrac{1}{2}s_1$. With $t=|z_i-z|/s_1$, this yields  $$\Theta_{\Sigma_p}^{\cH_{n,1}}(z_i,\tfrac{1}{2}s_1)\leq 2|z_i-z|/s_1+\beta/6$$ for all $i$ sufficient large such that $|z_i-z|\leq \tfrac{1}{2}s_1$. Hence, for all  $i$ sufficient large such that $|z_i-z|<s_1/6$ (guaranteeing $z\in \Sigma_p\cap B(z_i,\frac{1}{6}s_1)\neq\emptyset$), \begin{equation*}\begin{split} \Theta_{r_{i}^{-1}(\overline{A}-x)}^{\cH_{n,1}}(z_i,\tfrac{1}{6}s_1) &\leq 3 \mD{z_i}{\frac{1}{2}s_1}\left[\frac{\overline{A}-x}{r_i},\Sigma_p\right]+ 3\Theta_{\Sigma_p}^{\cH_{n,1}}(z_i,\tfrac{1}{2}s_1) \\ &\leq 6 \mD{z}{s_1}\left[\frac{\overline{A}-x}{r_i},\Sigma_p\right]+6|z-z_i|/s_1+\beta/2,\end{split}\end{equation*} where we used the weak quasitriangle inequality in the first line and we used the quasimonotoncity of the relative Walkup-Wets distance in the second line (see Lemma \ref{l:Dprop}). Since $z_i\rightarrow z$ and $r_i^{-1}(\overline{A}-x)\rightarrow \Sigma_p$, we conclude that \begin{equation}\label{e:m1-3}\limsup_{i\rightarrow\infty}\Theta_{A}^{\cH_{n,1}}(y_i,\tfrac{1}{6}r_is_1)=\limsup_{i\rightarrow\infty} \Theta_{r_{i}^{-1}(\overline{A}-x)}^{\cH_{n,1}}(z_i,\tfrac{1}{6}s_1) \leq \frac{2}{3}\beta<\beta.\end{equation} Note that $\frac{1}{6}r_is_1\leq s_2$ for all $i\gg 1$, since $r_i\rightarrow 0$. Therefore, by \eqref{e:m1-0}, \eqref{e:m1-2}, and \eqref{e:m1-3}, we have $y_i\in A_1$ for all sufficiently large $i$. Recalling that   $y_i\rightarrow x$, it follows that $x\in \overline{A_1}$. Since $x\in A\setminus A_1$ was fixed arbitrarily, this proves (vi).

We now aim to prove dimension bounds on $A$ and $A\setminus A_1$ assuming that $A$ is closed and nonempty. Since $\cH_{n,d}$ is a closed, translation invariant approximation class and $\cH_{n,1}$ points are detectable in $\cH_{n,d}$, the set $$\sing{\cH_{n,1}}{\cH_{n,d}}=\{(\Sigma_p)_{\cH_{n,1}^\perp}:\Sigma_p\in \cH_{n,d}\text{ and }0\in(\Sigma_p)_{\cH_{n,1}^\perp}\}$$ is also a local approximation class and $A\setminus A_1$ is locally \emph{unilaterally} well approximated by $\sing{\cH_{n,1}}{\cH_{n,d}}$ by Theorem \ref{singapprox}. By Theorem \ref{t:main2}, applied with $k=1$, the class $\sing{\cH_{n,1}}{\cH_{n,d}}$ is precisely the class $\cS\cH_{n,d}=\{S_p=\Sigma_p\cap|Dp|^{-1}(0):\Sigma_p\in\cH_{n,d}, 0\in S_p\}$ of all singular sets of nonconstant harmonic polynomials of degree at most $d$ that include the origin. Recall from the introduction that \begin{equation*} \Vol\big(\{x\in B(0,1/2):\dist(x,\Sigma_p)\leq r\}\big)\leq (C(n)d)^d\,r\quad\text{for all }\Sigma_p\in\cH_{n,d}\end{equation*} and
\begin{equation*} \Vol\big(\{x\in \ball(0,1/2): \dist(x,S_p)\leq r\}\big) \leq C(n)^{d^2}r^2\quad\text{for all }S_p\in\cS\cH_{n,d}\end{equation*} by work of Naber and Valtorta \cite{nabervaltorta}. Using an elementary Vitali covering argument (e.g., see \cite[(5.4) and (5.6)]{Mattila95}), it follows that $\cH_{n,d}$ has an $(n-1,C(n,d),1)$ covering profile and $\cS\cH_{n,d}$ has an $(n-2,C(n,d),1)$ covering profile in the sense of Definition \ref{coveringprofiles}.

Assume that $A$ is a nonempty closed subset of $\RR^n$. Since $A\setminus A_1$ is relatively closed in $A$ by (v), $A\setminus A_1$ is closed  in $\RR^n$, as well. By Theorem \ref{c:dim3}, $A$ has upper Minkowski dimension at most $n-1$, since $A$ is closed, $A$ is locally unilaterally well approximated by $\cH_{n,d}$, and $\cH_{n,d}$ has an $(n-1,C(n,d),1)$ covering profile. Also, by Theorem \ref{c:dim3}, $A\setminus A_1$ has upper Minkowski dimension at most $n-2$, since $A\setminus A_1$ is closed, $A\setminus A_1$ is locally unilaterally well approximated by $\cS\cH_{n,d}$, and $\cS\cH_{n,d}$ has an $(n-2,C(n,d),1)$ covering profile.
This establishes (viii) and the upper bound in (vii). To wrap up, observe that $A_1$ is nonempty by (vi), $A_1$ is locally closed by (ii), and $A_1$ is locally Reifenberg vanishing by (iii). Therefore, by Reifenberg's topological disk theorem (e.g.~see \cite{davidtoro-holes}), $A_1$ is a topological $(n-1)$-manifold (and more, see Remark \ref{r:reif}). Therefore, $A_1$ has Hausdorff and upper Minkowski dimension at least $n-1$. This completes the proof of (vii).
\end{proof}

By examining the proof that $A_1$ is relatively dense in $A$ in the proof of Theorem \ref{t:main1}, one sees the only essential property about the cones $\cH_{n,1}$ and $\cH_{n,d}$, beyond detectability, was that for every $\Sigma_p\in\cF_{n,k}$ there exist some $z\in\Sigma_p$ such that $\liminf_{s\downarrow 0}\Theta^{\cH_{n,1}}_{\Sigma_p}(z,s)=0$. Thus, abstracting the argument, one obtains the following result.

\begin{theorem} Let $\cT$ and $\cS$ be local approximation classes. Suppose that $\cT$ points are detectable in $\cS$, and  \begin{equation}
 \hbox{for all $S\in\ccS\cap\cT^\perp$ there exists $x\in S$ such that $\liminf_{r\downarrow 0} \Theta_{S}^{\cT}(x,r)=0$.}\end{equation} If $A$ is locally bilaterally well approximated by $\cS$, then the set $A_{\ccT}$ described by Theorem \ref{t:open} is relatively dense in $A$, i.e.~ $\overline{A_{\ccT}}\cap A=A$.
\end{theorem}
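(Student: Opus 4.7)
The plan is to adapt the argument at the end of the proof of Theorem \ref{t:main1} showing $A_1$ is relatively dense in $A$, replacing $\cH_{n,1}$ by $\cT$, $\cH_{n,d}$ by $\cS$, and the role of $\cF_{n,k}$ points ($k\geq 2$) by that of points in $A_{\cTp}$. First I would fix any $x \in A \setminus A_{\ccT}$ and aim to show $x \in \overline{A_{\ccT}}$. By Theorem \ref{t:open} applied to the pair $(\cS,\cT)$, there is the disjoint decomposition $A = A_{\ccT} \cup A_{\cTp}$, so $x \in A_{\cTp}$ and every tangent set $T$ of $\overline{A}$ at $x$ belongs to $\ccS \cap \cTp$. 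At least one such $T$ arises, along some sequence $r_i \downarrow 0$, as an Attouch--Wets limit $r_i^{-1}(\overline{A} - x) \to T$.

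The standing hypothesis on $\ccS \cap \cTp$ supplies a point $z \in T$ with $\liminf_{s \downarrow 0} \Theta_T^{\cT}(z, s) = 0$, and detectability of $\cT$ points in $\cS$ (Remark \ref{r:detectx}) supplies constants $\alpha, \beta > 0$ such that whenever $y \in A$, $\Theta_A^{\cS}(y, r') < \alpha$ for all $0 < r' \leq r$, and $\Theta_A^{\cT}(y, r) < \beta$ for some $r>0$, then $y \in A_{\ccT}$. Choose $s_1 > 0$ small enough that $\Theta_T^{\cT}(z, \tfrac{3}{2}s_1) \leq \beta/18$. I would then lift $z$ back into $A$ exactly as in Theorem \ref{t:main1}: the convergence $r_i^{-1}(\overline{A}-x) \to T$ provides $\overline{y}_i \in \overline{A}$ with $z_i := (\overline{y}_i - x)/r_i \to z$, and perturbing within the dense subset $A \subseteq \overline{A}$ produces $y_i \in A$ with $(y_i - x)/r_i \to z$, hence $y_i \to x$. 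Because $A$ is locally bilaterally well approximated by $\cS$, some $s_2 > 0$ satisfies $\sup_i \Theta_A^{\cS}(y_i, s) < \alpha$ for all $0 < s \leq s_2$.

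The closing step is the weak quasi-triangle and quasimonotonicity chain used in the proof of Theorem \ref{t:main1}: invoking Lemmas \ref{l:tprop} and \ref{l:Dprop} together with $\Theta_T^{\cT}(z, \tfrac{3}{2}s_1) \leq \beta/18$ gives
\begin{equation*}
\Theta_{r_i^{-1}(\overline{A}-x)}^{\cT}(z_i, \tfrac{1}{6}s_1) \;\leq\; 6\, \mD{z}{s_1}\!\left[\frac{\overline{A}-x}{r_i},\, T\right] + 6|z - z_i|/s_1 + \beta/2
\end{equation*}
for all $i$ with $|z_i - z| < s_1/6$; the first two terms vanish as $i \to \infty$ since $z_i \to z$ and $r_i^{-1}(\overline{A}-x) \to T$, so $\Theta_A^{\cT}(y_i, \tfrac{1}{6}r_i s_1) < \beta$ for all sufficiently large $i$, at which point $\tfrac{1}{6}r_i s_1 \leq s_2$ as well. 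Detectability then forces $y_i \in A_{\ccT}$, and $y_i \to x$ yields $x \in \overline{A_{\ccT}}$. The only real obstacle is organizational: one must confirm that every ingredient invoked from Appendix \ref{sect:2} (the decomposition in Theorem \ref{t:open}, the consequence of detectability in Remark \ref{r:detectx}, and the monotonicity statements in Lemmas \ref{l:tprop} and \ref{l:Dprop}) is formulated at the level of arbitrary local approximation classes, which \cite{localsetapproximation} confirms. Once that is checked, the concrete argument transcribes unchanged.
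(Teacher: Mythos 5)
Your proposal is correct and is exactly the paper's intended argument: the paper gives no separate proof of this theorem, stating only that it follows by abstracting the density argument from the proof of Theorem \ref{t:main1}, and your transcription (tangent set $T\in\ccS\cap\cTp$ at $x\in A_{\cTp}$ via Theorem \ref{t:open}, the hypothesized point $z\in T$ with $\liminf_{s\downarrow 0}\Theta_T^{\cT}(z,s)=0$, Remark \ref{r:detectx}, and the quasimonotonicity/weak quasitriangle chain from Lemmas \ref{l:tprop} and \ref{l:Dprop}) is precisely that abstraction. All ingredients are indeed stated in Appendix \ref{sect:2} for arbitrary local approximation classes, so nothing further is needed.
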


\section{Dimension bounds in the presence of good topology}

We now focus our attention on sets $A$ that separate $\RR^n$ into two connected components.  When $A=\Sigma_p$ and $p:\RR^n\rightarrow\RR$ is harmonic, this occurs precisely when the positive set $\Omega^+_p=\{x\in\RR^n:p(x)>0\}$ of $p$ and the negative set $\Omega^-_p=\{x\in\RR^n:p(x)<0\}$ of $p$ are pathwise connected.
To start, let us prove Lemma \ref{l:example} from the introduction, which implies that $\cF_{n,k}$ contains zero sets $\Sigma_p$ that separate $\RR^n$ into two components for all dimensions $n\geq 4$ and for all degrees $k\geq 2$.

\begin{proof}[Proof of Lemma \ref{l:example}] We sketch the argument when $a=b=1$, with the other cases following from an obvious modification. Let $q:\RR^2\rightarrow\RR$ be a homogeneous harmonic polynomial of degree $k\geq 2$. Note that by elementary complex analysis, $q$ can be written as the real part of a complex polynomial $\tilde q:\CC\rightarrow\CC$, $\tilde q(z)=cz^k$. Thus, $\Sigma_q$ is the union of $k$ equiangular lines through the origin and the chambers of $\RR^2\setminus\Sigma_q$ alternate between the positive and negative sets of $q$.
Let $U=(x_1,y_1)$ be any point such that $q(U)>0$. Then $p(U,U)>0$, as well, where $p(W_1,W_2)\equiv q(W_1)+q(W_2)$. To show that the positive set of $p$ is connected, it suffices to show that any point $(V_1,V_2)\in\RR^2\times\RR^2$ such that $p(V_1,V_2)>0$ can be connected to $(U,U)$ by a piecewise linear path in the positive set. If $p(V_1,V_2)>0$, then $q(V_1)>0$ or $q(V_2)>0$, say without loss of generality that $q(V_1)>0$. Then the desired path from $(V_1,V_2)$ to $(U,U)$ is described in Figure 6.1 nearby.
\begin{figure}
\begin{center}\includegraphics[width=.65\textwidth]{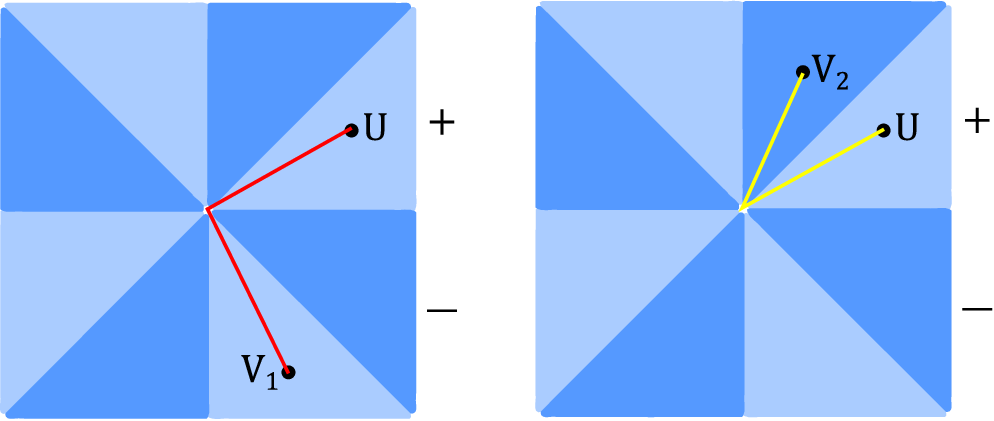}\end{center}%
\caption{Let $q:\RR^2\rightarrow\RR$ denote a nonconstant homogeneous harmonic polynomial (illustrated with degree 4). The light blue cells denote the positive set of $q$ and the medium blue cells denote the negative set of $q$. Suppose that $q(U)>0$, $q(V_1)>0$, and $p(V_1,V_2)>0$, where $p(W_1,W_2)\equiv q(W_1)+q(W_2)$. To move from $(V_1,V_2)$ to $(U,U)$ inside the positive set of $p$, first send $V_2$ to $U$ along the yellow path and then move $V_1$ to $U$ along the red path.}
\end{figure}
A similar argument verifies that the negative set of $p$ is connected and we are done.
\end{proof}

Our goal for the remainder of this section is to prove Theorem \ref{t:main3}, which requires the following notion of non-tangential accessibility due to Jerison and Kenig \cite{jerisonandkenig}.

\begin{definition}[\cite{jerisonandkenig}] \label{ntadomains}
A domain (i.e.~a connected open set) $\Omega \subset \mathbb R^n$ is called \emph{NTA} or \emph{non-tangentially accessible} if there exist constants $M>1$ and $R > 0$
 for which the following are true:
\begin{enumerate}
\item $\Omega$ satisfies the \emph{corkscrew condition}: for all $Q \in \partial \Omega$ and $0 < r < R$, there exists $x\in \Omega\cap B(Q,r)$ such that $\dist(x,\partial\Omega)>M^{-1}r$.
\item $\RR^n\setminus\Omega$ satisfies the corkscrew condition.
\item $\Omega$ satisfies the \emph{Harnack chain condition}: If $x_1, x_2\in \Omega\cap B(Q,r/4)$ for some $Q\in \partial \Omega$ and $0<r<R$, and $\dist(x_1,\partial\Omega)>\delta$, $\dist(x_2,\partial\Omega)>\delta$, and $|x_1-x_2|<2^l\delta$ for some $\delta>0$ and $l\geq 1$, then there exists a chain of no more than $Ml$ overlapping balls  connecting $x_1$ to $x_2$ in $\Omega$ such that for each ball $B=B(x,s)$ in the chain: \begin{align*}&M^{-1}s<\gap(B,\partial\Omega)<Ms, &&\gap(B,\partial\Omega)=\inf_{x\in B}\inf_{y\in\partial\Omega}|x-y|, \\ \diam B> &M^{-1}\min\{\dist(x_1,\partial\Omega),\dist(x_2,\partial\Omega)\}, &&\diam B=\sup_{x,y\in B}|x-y|.\end{align*}
\end{enumerate} We refer to $M$ and $R$ as \emph{NTA constants} of the domain $\Omega$. When $\partial\Omega$ is unbounded, $R=\infty$ is allowed. To distinguish between conditions (i) and (ii), the former may be called the \emph{interior corkscrew condition} and the latter may be called the \emph{exterior corkscrew condition}.
\end{definition}

\begin{remark}\label{r:ntadomains} In the definition of NTA domains, the additional restriction $R=\infty$ when $\Omega$ is unbounded is sometimes imposed (e.g.~ see \cite{kenigtoroannals}, \cite{kenigtorotwophase}, or \cite{kenigpreisstoro}) in order to obtain globally uniform harmonic measure estimates on unbounded domains, but that restriction is not essential in the geometric context of Theorem \ref{t:main3}, and thus, we omit it. \end{remark}

 An essential feature of NTA domains that we need below is that the NTA properties persist under limits (with slightly different constants). When $\Gamma_i=r_i^{-1}(\partial\Omega -Q_i)$ is a sequence of pseudoblowups of the boundary $\partial\Omega$ of a 2-sided NTA domain $\Omega\subset\RR^n$ for some $Q_i\in\partial\Omega$ and $r_i>0$ such that $Q_i\rightarrow Q\in\partial\Omega$ and $r_i\downarrow 0$, the following lemma  is due to Kenig and Toro \cite[Theorem 4.1]{kenigtorotwophase}; also see \cite[Lemma 1.5]{AM-tangents} for a recent variant on uniform domains due to Azzam and Mourgoglou. For the proof of Lemma \ref{closedunderblowup}, see Appendix B below.

\begin{lemma}\label{closedunderblowup}
 Suppose that $\Gamma_i \subset \RR^n$ is a sequence of closed sets such that $\RR^n\setminus \Gamma_i = \Omega^+_i\cup\Omega^-_i$ is the union of complimentary NTA domains $\Omega_i^+$ and $\Omega_i^-$ with NTA constants $M$ and $R$ independent of $i$. If $\Gamma_i \rightarrow \Gamma\neq\emptyset$ in the Attouch-Wets topology, then $\RR^n\setminus \Gamma = \Omega^+\cup\Omega^-$ is the union of complementary NTA domains $\Omega^+$ and $\Omega^-$ with NTA constants $2M$ and $R$.
\end{lemma}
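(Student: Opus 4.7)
The plan is to define $\Omega^\pm$ as the connected components of $\RR^n\setminus\Gamma$ containing two distinguished limits of corkscrew points from $\Omega_i^\pm$, and then to verify each NTA condition by lifting it to $\Omega_i^\pm$ and passing to the Attouch--Wets limit. The factor of $2$ loss in the NTA constant will arise from converting nonstrict limits of strict inequalities (e.g.\ $\dist(c_i,\Gamma_i)>r/M$ passing to $\dist(c,\Gamma)\geq r/M$) into strict inequalities at a slightly smaller constant, which I will absorb by working at a slightly smaller radius $s<r$ and then letting $s\uparrow r$.

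First I would fix $Q_0\in\Gamma$ and a scale $r_0<R$, pick $Q_i\in\Gamma_i$ with $Q_i\to Q_0$ (available by A--W convergence), and apply the interior corkscrew condition in each of $\Omega_i^\pm$ at $Q_i$ at scale $r_0$ to obtain $c_i^\pm\in\Omega_i^\pm\cap B(Q_i,r_0)$ with $\dist(c_i^\pm,\Gamma_i)>r_0/M$. A simple ball-overlap argument forces $|c_i^+-c_i^-|\geq r_0/M$ (else $B(c_i^+,r_0/(2M))$ and $B(c_i^-,r_0/(2M))$ would meet inside $\RR^n\setminus\Gamma_i$ and put $c_i^+, c_i^-$ in the same component), so along a subsequence the limits $c^\pm$ are distinct and lie in $\RR^n\setminus\Gamma$. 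The first key step is to show that $c^+$ and $c^-$ lie in distinct components of $\RR^n\setminus\Gamma$: by contradiction, join them by a continuous path $\gamma$ (using openness plus connectedness in $\RR^n$), note that $\gamma$ is compact so $\dist(\gamma,\Gamma_i)>0$ uniformly for large $i$ by A--W convergence, conclude $\gamma\subset\RR^n\setminus\Gamma_i=\Omega_i^+\sqcup\Omega_i^-$ lies wholly in one of these, and derive a contradiction since the endpoints $c^\pm$ are joined to $c_i^\pm\in\Omega_i^\pm$ by short line segments avoiding $\Gamma_i$, placing $c^\pm$ in \emph{both} $\Omega_i^+$ and $\Omega_i^-$ simultaneously. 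This lets me define $\Omega^\pm$ as the components of $\RR^n\setminus\Gamma$ containing $c^\pm$.

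The second key step, and the main obstacle, is to show $\Omega^+\cup\Omega^-=\RR^n\setminus\Gamma$. Given $v$ in a purported third component $V$, I pass to a subsequence along which $v$ lies consistently in, say, $\Omega_i^+$, and use that $c^+\in\Omega_i^+$ for large $i$ by the short-segment argument above. The difficulty is that a continuous path from $v$ to $c^+$ inside $\Omega_i^+$ may approach $\Gamma_i$ arbitrarily closely, so its naive A--W limit need not avoid $\Gamma$. I would bypass this by invoking the globalized Harnack chain condition (standard in NTA theory \`a la \cite{jerisonandkenig}, obtained by covering a continuous path in $\Omega_i^+$ with local corkscrew balls and iterating the local chain condition): any two points of $\Omega_i^+$ at distance $\geq\delta$ from $\Gamma_i$ can be joined by a chain whose number of balls and whose radii are controlled only in terms of $|v-c^+|$, $\delta$, and $M, R$. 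Since $v,c^+\in\RR^n\setminus\Gamma$ have positive distance from $\Gamma$ (and hence from $\Gamma_i$ for large $i$), these quantities are uniform in $i$, so passing to a subsequential A--W limit converts the chain into a chain of non-degenerate balls in $\RR^n\setminus\Gamma$ connecting $v$ to $c^+$, contradicting $v\in V$.

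Finally I would verify the three NTA conditions for $\Omega^\pm$ at constants $(2M,R)$. The interior and exterior corkscrew conditions come from repeating the initial construction at each $Q\in\Gamma$ and scale $r<R$: applying corkscrew in $\Omega_i^\pm$ at scale $s<r$ for $Q_i\to Q$ yields limit points in $\overline{B(Q,s)}\subset B(Q,r)$ with distance to $\Gamma$ at least $s/M$, and the distinct-components argument at $Q$ assigns one to $\Omega^+$ and the other to $\Omega^-$; letting $s\uparrow r$ absorbs the strictness into the constant $2M$. For the Harnack chain in $\Omega^+$, given $x_1,x_2\in\Omega^+\cap B(Q,r/4)$ with $\dist(x_j,\Gamma)>\delta$ and $|x_1-x_2|<2^l\delta$, I would transfer the configuration back to $\Omega_i^+$ (using the labeling-consistency argument: the small ball $B(x_j,\dist(x_j,\Gamma)/2)$ is connected and avoids $\Gamma_i$ for large $i$, so it must lie in $\Omega_i^+$ by the chain of balls argument above), apply the local Harnack chain in $\Omega_i^+$ at $Q_i\to Q$ at a slightly enlarged scale, and pass to a subsequential A--W limit of the $\leq Ml$ balls; again the factor-of-two slack in $M$ absorbs the loss between $\dist(\cdot,\Gamma_i)$ and $\dist(\cdot,\Gamma)$ in the limit.
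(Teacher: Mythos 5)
Your proposal is correct in outline but organizes the argument differently from the paper, and it is worth noting what each route buys. The paper does not define $\Omega^{\pm}$ as connected components at all: it passes to subsequential Attouch--Wets limits $F^{\pm}$ of the closures $\overline{\Omega_i^{\pm}}$ and sets $\Omega^{\pm}:=\RR^n\setminus F^{\mp}$. With that definition the partition $\RR^n=\Omega^+\cup\Gamma\cup\Omega^-$, the identification $F^+\cap F^-=\Gamma$, and $\partial\Omega^{\pm}=\Gamma$ are soft limit arguments, and the corkscrew and Harnack chain conditions transfer exactly as in your last step; the only place quantitative connectivity enters is at the very end, where connectedness of $\Omega^{\pm}$ is proved by citing the standard fact that NTA domains are uniform domains (with constants depending only on $M,R$), transporting the cigar path from $\Omega_i^{\pm}$ for a single large fixed $i$, and observing it stays far from $F^{\mp}$. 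You instead make connectedness free by defining $\Omega^{\pm}$ as the components containing limiting corkscrew markers $c^{\pm}$, and push all the quantitative connectivity into the ``no third component'' step and the labeling-consistency transfers; the external ingredient you need there (a global Harnack chain between any two points with count and radii controlled by their mutual distance, their distance to the boundary, and $M,R$) is essentially equivalent to the uniform-domain property the paper invokes, so the two proofs rest on the same black box deployed for dual purposes.

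Three details in your sketch deserve attention, though none is fatal. First, your parenthetical derivation of the global chain condition (``covering a continuous path with corkscrew balls'') does not by itself control the number of balls, since an arbitrary path in $\Omega_i^+$ may be long and hug $\Gamma_i$; you should instead quote, as the paper does, that an NTA domain with constants $M,R$ is a uniform domain with constants depending only on $M,R$, and chain along the resulting cigar path. Second, when you pass a chain of overlapping open balls to a limit, consecutive balls may degenerate to mere tangency; either enlarge the limit radii slightly (the gap bounds and the $M\mapsto 2M$ slack leave room) or, more simply, keep the chain at a single sufficiently large fixed $i$ and check directly, via Attouch--Wets closeness of $\Gamma_i$ to $\Gamma$, that it already satisfies the $2M$ conditions relative to $\Gamma$ --- this is exactly how the paper handles its Harnack chain step. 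Third, you never state that $\partial\Omega^{\pm}=\Gamma$, which is needed both for ``complementary'' and because the NTA conditions for $\Omega^{\pm}$ are phrased in terms of $\dist(\cdot,\partial\Omega^{\pm})$ while your estimates are in terms of $\dist(\cdot,\Gamma)$; it follows quickly from your corkscrew step (every ball $B(Q,r)$, $Q\in\Gamma$, meets both $\Omega^+$ and $\Omega^-$, and $\Gamma$ is disjoint from both), but it should be said.
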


In the remainder of this section, we work with subclasses of $\cH_{n,d}$ and $\cF_{n,k}$ whose zero sets
 $\Sigma_p$ separate $\RR^n$ into two distinct NTA components with uniform NTA constants.

\begin{definition}[2-sided NTA restricted classes $\cH_{n,d}^*$, $\cH_{n,d}^{**}$, $\cF_{n,k}^*$, $\cF_{n,k}^{**}$] \label{handfclass} For all $n\geq 2$ and $d\geq 1$, let $\cH_{n,d}^*$ denote the collection of all $\Sigma_p\in\cH_{n,d}$ such that $\Omega^\pm_p=\{x\in\RR^n:\pm p(x) > 0\}$ are NTA domains with NTA constants $M^*=M$ and $R^*=\infty$ for some fixed $M>1$. (We deliberately suppress the choice of $M^*$ from the notation.) Also, let $\cH_{n,d}^{**}$ denote the collection of all $\Sigma_p\in\cH_{n,d}$ such that $\Omega^\pm_p$ are NTA domains with NTA constants $M^{**}=2M^*$ and $R^{**}=\infty$. Finally, set $\cF_{n,k}^*=\cH_{n,k}^*\cap\cF_{n,k}$ and $\cF_{n,k}^{**}=\cH_{n,k}^{**}\cap \cF_{n,k}$ for all $k\geq 1$.\end{definition}

\begin{remark} \label{closedclasses} The classes $\starH_{n,d}$ (hence $\cH_{n,d}^{**}$) and $\starF_{n,k}$ (hence $\cF_{n,k}^{**}$) are local approximation classes (see Definition \ref{d:big}), because $R^*=\infty$, and it is apparent that $\starH_{n,d}$ is translation invariant in the sense that $\Sigma_p-x\in\starH_{n,d}$ for all $\Sigma_p\in\starH_{n,d}$ and $x\in \Sigma_p$. Hence $\overline{\starH_{n,d}}$ is also translation invariant. By Corollary \ref{c:closed} and Lemma \ref{closedunderblowup}, $\overline{\starH_{n,d}}\subseteq \cH_{n,d}^{**}$ and $\overline{\starF_{n,k}} \subseteq \cF_{n,k}^{**}$. Since $\cH_{n,k}$ points are detectable in $\cH_{n,d}$ for all $1\leq k\leq d$ by Corollary \ref{c:main2} and $\cH_{n,d}^*\subseteq\cH_{n,d}$, we have $\cH_{n,k}$ points are detectable in $\cH_{n,d}^*$, as well. Finally, we reiterate that $\cF_{n,k}^*$ is nonempty for some $M^*>1$ if and only if $k=1$ and $n\geq 2$; $k\geq 2$ is even and $n\geq 4$; or, $k\geq 3$ is odd and $n\geq 3$. See Remark \ref{r:sharp}. The assertion that the interior of the two connected components of $\RR^n\setminus\Sigma_p$ are NTA domains when $n=3$ and $p=p(x,y,z)$ is Szulkin's polynomial (or any of Lewy's odd degree polynomials) and when $n=4$ and $p=p(x_1,y_1,x_2,y_2)$ is the zero set of one of the polynomials from Lemma \ref{l:example} follows from the fact that in each case $\Sigma_p\cap \partial B(0,1)$ is a smooth hypersurface in the unit sphere and $\Sigma_p$ is a cone.
\end{remark}

The following technical proposition, alluded to in the introduction after the statement of Theorem \ref{t:main3}, is a consequence of Lemma \ref{closedunderblowup}.

\begin{lemma}\label{puttingthestarin} Suppose that $A\subseteq\RR^n$ is closed and $\RR^n\setminus A = \Omega^+\cup\Omega^-$ is a union of complementary NTA domains. If $A$ is locally bilaterally well approximated by $\cH_{n,d}$ for some $n\geq 2$ and $d\geq 1$, then $A$ is locally bilaterally well approximated by $\starH_{n,d}$ for some $M^*>1$ depending only on the NTA constants of $\Omega^+$ and $\Omega^-$.
\end{lemma}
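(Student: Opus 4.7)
The plan is to argue by contradiction via a pseudoblowup. Let $M_0$ and $R_0$ denote common NTA constants for $\Omega^+$ and $\Omega^-$, and fix $M^* := 2M_0$ once and for all. Suppose, for contradiction, that $A$ is not locally bilaterally well approximated by $\starH_{n,d}$. Then there exist $\varepsilon > 0$, a compact set $K \subseteq A$, and sequences $x_i \in K$ and $r_i \downarrow 0$ with $\Theta_A^{\starH_{n,d}}(x_i, r_i) \geq \varepsilon$ for every $i$. Form the pseudoblowups $T_i := r_i^{-1}(A - x_i)$, each a closed subset of $\RR^n$ containing the origin. By sequential compactness of $\CL{0}$ in the Attouch-Wets topology, after passing to subsequences we may assume $x_i \to x_\infty \in K$ and $T_i \to T$ for some closed set $T \ni 0$.

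The first task is to identify $T$ as an element of $\starH_{n,d}$. Since $A$ is locally bilaterally well approximated by $\cH_{n,d}$, scale-invariance of the approximation number gives $\Theta_{T_i}^{\cH_{n,d}}(0,R) = \Theta_A^{\cH_{n,d}}(x_i, r_i R) \to 0$ for every fixed $R > 0$, and a diagonal argument produces competitors $\Sigma_{p_i} \in \cH_{n,d}$ with $\Sigma_{p_i} \to T$ in Attouch-Wets; hence $T \in \cH_{n,d}$ by the closedness of $\cH_{n,d}$ established in Corollary \ref{c:closed}. On the other hand, each complement $\RR^n \setminus T_i$ is the union of the complementary NTA domains $r_i^{-1}(\Omega^\pm - x_i)$, with NTA constants $M_0$ and $R_0/r_i$ (the convention being $R_0/r_i = \infty$ when $R_0 = \infty$). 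For any fixed $R < \infty$, we have $R_0/r_i > R$ for all sufficiently large $i$, so Lemma \ref{closedunderblowup} applied with uniform constants $(M_0, R)$ yields that $\RR^n \setminus T$ splits as a union of two complementary NTA domains with constants $(2M_0, R)$. Since $R$ was arbitrary, the radius constant upgrades to $\infty$, so $\RR^n \setminus T$ is the union of complementary NTA domains with constants $(2M_0, \infty) = (M^*, \infty)$. Combined with $T \in \cH_{n,d}$, this shows $T \in \starH_{n,d}$.

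Finally, since $0 \in T$ and $T \in \starH_{n,d}$, the set $T$ itself is an admissible competitor in $\Theta^{\starH_{n,d}}_{T_i}(0, 1)$, so
\[
\varepsilon \;\leq\; \Theta_A^{\starH_{n,d}}(x_i, r_i) \;=\; \Theta_{T_i}^{\starH_{n,d}}(0, 1) \;\leq\; \mD{0}{1}[T_i, T] \longrightarrow 0,
\]
a contradiction. The principal obstacle in executing this plan is the handling of the NTA radius constant in the second step: because $R_0/r_i$ blows up, one cannot apply Lemma \ref{closedunderblowup} with a single uniform $R$; one must apply it at each finite truncation and then let the truncation tend to infinity to conclude that $T$ actually lies in $\starH_{n,d}$ (where $R^* = \infty$). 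Once that bookkeeping is in place, the remainder is a routine application of scale-invariance of $\Theta$, Attouch-Wets compactness of $\CL{0}$, and the observation that a pseudotangent is a legitimate competitor for the approximation number at the base point.
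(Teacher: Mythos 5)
Your proof is correct and takes essentially the same route as the paper: both arguments reduce to showing that every pseudotangent set of $A$ lies in $\starH_{n,d}$ with $M^*=2M_0$, using Corollary \ref{c:closed} to place the limit in $\cH_{n,d}$ and Lemma \ref{closedunderblowup} (with the blown-up radius constants $R_0/r_i\to\infty$, handled by finite truncations exactly as you describe) to get the NTA property with $R^*=\infty$. The only difference is presentational: the paper concludes by citing the pseudotangent characterization of local bilateral well approximation (Theorem \ref{t:tangent-well}), whereas you unroll that characterization into an explicit compactness-and-contradiction argument using scale and translation invariance of $\Theta$ and the Attouch-Wets compactness of $\CL{0}$.
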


\begin{proof} Suppose that $A$ is closed, $A$ is locally bilaterally well approximated by $\cH_{n,d}$, and $\RR^n\setminus A=\Omega^+\cup\Omega^-$ is a union of complementary NTA domains with uniform NTA constants $M$ and $R$. On one hand, $\PsTan(A,x)\subseteq\overline{\cH_{n,d}}=\cH_{n,d}$ for all $x\in A$ by Theorem \ref{t:tangent-well} and Corollary \ref{c:closed}. On the other hand, for every $x\in A$ and $r>0$, the set $(A-x)/r=\Omega^+_{x,r}\cup\Omega^-_{x,r}$ is a union of complementary NTA domains $\Omega^+_{x,r}$ and $\Omega^-_{x,r}$ with NTA constants $M_{x,r}=M$ and $R_{x,r}=R/r$. Thus, every pseudotangent set $T=\lim_{i\rightarrow 0}(A-x_i)/r_i\in\PsTan(A,x)$ separates $\RR^n$ into two NTA domains with NTA constants $M_T=2M$ and $R_T=\infty$ by Lemma \ref{closedunderblowup}, since $R_{x_i,r_i}=R/r_i\rightarrow\infty$ as $r_i\rightarrow 0$. Therefore, $\PsTan(A,x)\subseteq\cH_{n,d}^*$ for every $x\in A$ with $M^*=2M$. By Theorem \ref{t:tangent-well}, it follows that $A$ is locally bilaterally well approximated by $\cH_{n,d}^*$, as desired.
\end{proof}

In view of Lemma \ref{puttingthestarin}, Theorem \ref{t:main3} is a special case of the following theorem.

\begin{theorem}\label{t:main4} Let $n\geq 2$, $d\geq 2$, and $M^*>1$. If $A\subseteq\RR^n$ is closed and locally bilaterally well approximated by $\cH_{n,d}^*$, then  \begin{enumerate}
\item[(i)]  $A\setminus A_1=A_2\cup\dots \cup A_{d}$ has upper Minkowski dimension at most $n-3$; and,
\item[(ii)] the ``even singular set" $A_2\cup A_4\cup A_6 \cup \cdots$ has Hausdorff dimension at most $n-4$.
\end{enumerate}\end{theorem}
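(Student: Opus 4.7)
The plan is to reduce both (i) and (ii) to refined Naber--Valtorta-type neighborhood volume estimates on restricted cone classes, which I would obtain from the quantitative stratification of Cheeger--Naber--Valtorta \cite{cheegernabervaltorta,nabervaltorta} combined with a topological obstruction supplied by the two-sided NTA hypothesis.

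For (i), I would mimic the last paragraph of the proof of Theorem \ref{t:main1} verbatim, with $\cH_{n,d}$ replaced by $\cH_{n,d}^{*}$: the set $A\setminus A_1$ is closed (by property (v) and the closedness of $A$), and, via Theorem \ref{singapprox} together with Theorem \ref{t:main2} at level $k=1$, is locally unilaterally well approximated by the singular class
\[
\cS\cH_{n,d}^{*}=\{S_p=\Sigma_p\cap|Dp|^{-1}(0):\Sigma_p\in\cH_{n,d}^{*},\ 0\in S_p\}.
\]
Theorem \ref{c:dim3} then reduces the bound $\udim_M(A\setminus A_1)\leq n-3$ to producing, for every $\varepsilon>0$, a constant $C=C(n,d,M^{*},\varepsilon)$ with
\[
\Vol\bigl(\{x\in B(0,1/2):\dist(x,S_p)\leq r\}\bigr)\leq C\, r^{3-\varepsilon}\qquad\text{for all }S_p\in\cS\cH_{n,d}^{*}.
\]
To prove this I would run the quantitative stratification machine of \cite{cheegernabervaltorta,nabervaltorta}. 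The essential new input is a uniform non-approximation property: there exists $\eta=\eta(n,d,M^{*})>0$ such that for every $\Sigma_p\in\cH_{n,d}^{*}$, every singular point $x\in|Dp|^{-1}(0)\cap\Sigma_p$, and every scale $r>0$, the rescaled set $r^{-1}(\Sigma_p-x)$ is $\eta$-far from every $(n-2)$-symmetric harmonic cone in $\cH_{n,d}$. Every such cone has the form $\Sigma_q\times\RR^{n-2}$ with $\Sigma_q\in\cF_{2,k}$ for some $1\leq k\leq d$; for $k\geq 2$ it separates $\RR^n$ into $2k\geq 4$ components, which Lemma \ref{closedunderblowup} forbids as an Attouch--Wets limit of rescalings of $\Sigma_p$, and for $k=1$ it is a hyperplane, so closeness would force $x$ to be an $\cH_{n,1}$ point by Theorem \ref{t:main2} and hence non-singular. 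A normal-families compactness argument based on Lemma \ref{convergenceofzerosets} and Corollary \ref{c:closed} converts these qualitative obstructions into the uniform $\eta$. This places every singular point in the stratum $S^{n-3}_{\eta}$ of \cite{cheegernabervaltorta}, and the standard decomposition-plus-Vitali scheme of \cite{nabervaltorta} delivers the $r^{3-\varepsilon}$ bound.

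For (ii), the identity $A_{2k}=U_{2k}\setminus U_{2k-1}$ (from Theorem \ref{t:main1}(ii) and (v)) exhibits each $A_{2k}$ as a locally closed subset of $\RR^n$, hence $F_\sigma$; consequently $A^{\mathrm{even}}=A_2\cup A_4\cup\cdots$ is a countable union $\bigcup_{k,m}F_{2k,m}$ of closed subsets with $F_{2k,m}\subseteq A_{2k}$. By Theorem \ref{t:main1}(iv) together with an analogue of Lemma \ref{puttingthestarin} (the two-sided NTA hypothesis is preserved under blow-ups by Lemma \ref{closedunderblowup}), each $F_{2k,m}$ is locally bilaterally, hence unilaterally, well approximated by $\cF_{n,2k}^{**}$. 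I would then rerun the CNV quantitative stratification on $\cF_{n,2k}^{**}$, this time invoking Lewy's theorem \cite{lewy}: no even-degree homogeneous harmonic polynomial in three variables has zero set separating $\RR^3$ into two components. This excludes every $(n-3)$-symmetric cone $\Sigma_h\times\RR^{n-3}$ with $\Sigma_h\in\cF_{3,2k}^{**}$, so the singular stratum advances to $S^{n-4}_{\eta}$ and the CNV machinery yields
\[
\Vol\bigl(\{x\in B(0,1/2):\dist(x,\Sigma_p)\leq r\}\bigr)\leq C(n,d,M^{*},\varepsilon)\, r^{4-\varepsilon}
\]
for all $\Sigma_p\in\cF_{n,2k}^{**}$. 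Via Theorem \ref{c:dim3} this gives $\udim_M F_{2k,m}\leq n-4+\varepsilon$ for every $\varepsilon>0$, and since Hausdorff dimension is countably stable, $\dim_H A^{\mathrm{even}}\leq n-4$.

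The main obstacle I expect is the quantitative stratification step itself: the qualitative topological obstructions (no $\Sigma_q\times\RR^{n-2}$ with $k\geq 2$ in (i); no even-degree $\Sigma_h\times\RR^{n-3}$ in (ii)) must be upgraded, via compactness, into uniform non-approximation bounds with constants depending only on $n$, $d$, and $M^{*}$. This is where the full strength of Lemma \ref{convergenceofzerosets}, Corollary \ref{c:closed}, and in particular Lemma \ref{closedunderblowup} is needed, the last to guarantee that the two-sided NTA structure persists under Attouch--Wets limits of rescalings.
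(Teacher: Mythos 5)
Your treatment of (i) is essentially the paper's argument: pass to the singular class via Theorem \ref{singapprox} and Theorem \ref{t:main2} (the paper works with $\sing{\cH_{n,1}}{\overline{\cH_{n,d}^{**}}}$ since only the closure $\overline{\cH_{n,d}^{*}}\subseteq\cH_{n,d}^{**}$ is available, a minor point), then upgrade the topological obstruction by compactness (Lemma \ref{convergenceofzerosets}, Corollary \ref{c:closed}, Lemma \ref{closedunderblowup}) to show singular points are never $(n-2,\eta,r,x)$-symmetric, place them in $\mathscr{S}^{n-3}_{\eta,r}(p)$, and apply Theorem \ref{cnvestimate} (with Lemma \ref{frequencyofpolynomials} giving the uniform frequency bound) plus Theorem \ref{c:dim3}; this is exactly Lemma \ref{criticalinsidestrata} and the first half of the paper's proof.

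Part (ii), however, has a genuine gap. First, your claim that each closed piece $F_{2k,m}\subseteq A_{2k}$ is ``locally bilaterally well approximated by $\cF^{**}_{n,2k}$'' is not what Theorem \ref{t:main1}(iv) gives: that statement says the ambient set $A$ is bilaterally well approximated \emph{along} $A_{2k}$ by $\cF_{n,2k}$; the set $A_{2k}$ itself is typically of dimension at most $n-4$ and cannot be bilaterally close to the $(n-1)$-dimensional zero sets in $\cF^{**}_{n,2k}$ (consider $p=x_1x_2$, where $A_2$ is an $(n-2)$-plane). Second, and relatedly, the displayed estimate $\Vol(\{x\in B(0,1/2):\dist(x,\Sigma_p)\leq r\})\leq Cr^{4-\varepsilon}$ for $\Sigma_p\in\cF^{**}_{n,2k}$ is false as written, since the $r$-neighborhood of the $(n-1)$-dimensional set $\Sigma_p$ has volume comparable to $r$; what you need is a covering profile for the class that actually unilaterally approximates $A_{2k}$, namely the singular-parts class $\sing{\cH_{n,2k-1}}{\overline{\cH^{**}_{n,2k}}}$ furnished by Theorem \ref{singapprox} (using detectability of $\cH_{n,2k-1}$ points in $\cH^{**}_{n,2k}$), not the zero sets themselves. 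Once the argument is rerouted through that class, the paper shows no CNV stratification is needed at all: for a degree-$2k$ homogeneous harmonic polynomial whose zero set separates $\RR^n$, the set of points where all derivatives of order $\leq 2k-1$ vanish is precisely the linear subspace of translation-invariant directions, and Lewy's theorem together with the two-variable case forces its dimension to be at most $n-4$ (Lemma \ref{l:subspace}); hence the singular-parts class consists of subspaces with an exact $(n-4,C(n),1)$ covering profile, and $\dim_H A_{2k}\leq n-4$ follows from $\sigma$-compactness of $A_{2k}$ and Theorem \ref{c:dim5}. Your CNV route could in principle be repaired (applied to the $\cH^{\perp}_{n,2k-1}$ stratum, where homogeneity of degree exactly $2k$ guarantees the evenness of the limiting cone that your appeal to Lewy requires), but as written the approximation statement and the volume bound it rests on are both incorrect.
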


To prove Theorem \ref{t:main4} using the technology of \cite{localsetapproximation}, we need to show the existence of ``covering profiles" (see Definition \ref{coveringprofiles}) for the classes $\sing{\cH_{n,1}}{\overline{\cH_{n,d}^*}}$ and $\sing{\cH_{n,d-1}}{\overline{\cH_{n,d}^*}}$ (see Definition \ref{sing}), which are well defined because $\overline{\cH_{n,d}^*}$ is translation invariant and $\cH_{n,k}$ points are detectable in $\cH_{n,d}^*$ by Remark \ref{closedclasses}. The following lemma proves the existence of good covering profiles for $\sing{\cH_{n,k-1}}{\overline{\cH_{n,k}^*}}$ for all degrees $k\geq 2$.

\begin{lemma}\label{l:subspace} Let $k\geq 2$ and assume that $n+(k\hbox{\rm\ mod }2)\geq 4$. For every $k$ homogeneous harmonic polynomial $p:\RR^n\rightarrow\RR$ such that $\RR^n\setminus\Sigma_p$ has two connected components, $$(\Sigma_p)_{\cH_{n,k-1}^\perp}=\{x\in \Sigma_p:{\liminf_{r\rightarrow 0}}\Theta_{\Sigma_p}^{\cH_{n,k-1}}(x,r)>0\}$$ is a linear subspace $V$ of $\RR^n$ with $\dim V\leq n-4+(k\hbox{\rm\ mod }2)$. In particular, $$\sing{\cH_{n,k-1}}{\overline{\starH_{n,k}}}=\left\{(\Sigma_p)_{\cH_{n,k-1}^\perp}: \Sigma_p\in\overline{\starH_{n,k}},\ 0\in(\Sigma_p)_{\cH_{n,k-1}^\perp}\right\}$$ admits an $(n-4+(k\hbox{\rm\ mod }2),C(n),1)$ covering profile.
\end{lemma}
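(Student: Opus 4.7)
The plan is to identify $V \colonequals (\Sigma_p)_{\cH_{n,k-1}^\perp}$ with the linear subspace along which $p$ is translation-invariant, and then to bound its codimension using an elementary degree count together with Lewy's separation theorem.

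First, I invoke Theorem \ref{t:main2} (with $d=k$ and the theorem's exponent replaced by $k-1$; this is permissible since $k\geq 2$) to express
\[
 V = \{x \in \Sigma_p : \partial^\alpha p(x)=0 \text{ for all }|\alpha|\leq k-1\}.
\]
A descending induction using Euler's identity $j\,\partial^\alpha p(x) = x\cdot\nabla(\partial^\alpha p)(x)$ on the $j$-homogeneous partial $\partial^\alpha p$ (with $j=k-|\alpha|$) reduces this condition to the vanishing of the \emph{linear} polynomials $\{\partial^\beta p : |\beta|=k-1\}$ at $x$. Hence $V$ is a linear subspace of $\RR^n$. Moreover, Taylor expanding $p$ at $x\in V$ and using $k$-homogeneity yields $p(x+y)=p(y)$ for all $y\in\RR^n$, so $V$ is precisely the subspace of translation invariances of $p$. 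Writing $\RR^n=V^\perp\oplus V$ and letting $\pi\colon\RR^n\to V^\perp$ denote orthogonal projection, this forces $p=\tilde p\circ \pi$ for a $k$-homogeneous harmonic polynomial $\tilde p$ on $V^\perp\cong\RR^m$, $m=n-\dim V$, with no nontrivial translation invariances. Since $\Sigma_p=\Sigma_{\tilde p}\times V$ and $V$ is connected,
\[
 \RR^n\setminus\Sigma_p = (V^\perp\setminus\Sigma_{\tilde p})\times V
\]
has two connected components if and only if $V^\perp\setminus\Sigma_{\tilde p}$ does.

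I then rule out the low-dimensional cases for $m$: (a) $m=1$ is impossible because no nonzero $ct^k$ with $k\geq 2$ is harmonic; (b) $m=2$ is impossible because every nonzero $k$-homogeneous harmonic polynomial on $\RR^2$ equals $\Re(cz^k)$ or $\Im(cz^k)$, so its zero set is the union of $k$ equiangular lines through the origin, cutting $\RR^2$ into $2k\geq 4$ sectors; (c) $m=3$ with $k$ even is impossible by Lewy's theorem \cite{lewy}. Therefore $m\geq 4-(k\bmod 2)$ and $\dim V\leq n-4+(k\bmod 2)$, as claimed. For the covering profile, suppose $\Sigma_p\in\overline{\starH_{n,k}}$ with $0\in(\Sigma_p)_{\cH_{n,k-1}^\perp}$. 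Then Theorem \ref{t:main2} forces $\partial^\alpha p(0)=0$ for all $|\alpha|\leq k-1$; since $\deg p\leq k$, the polynomial $p$ must be $k$-homogeneous. By Remark \ref{closedclasses} and Lemma \ref{closedunderblowup}, $\overline{\starH_{n,k}}\subseteq \cH_{n,k}^{**}$, so $\RR^n\setminus\Sigma_p$ still has two connected components, and the first part of the lemma applies. Hence every element of $\sing{\cH_{n,k-1}}{\overline{\starH_{n,k}}}$ is a linear subspace through the origin of dimension $d\leq n-4+(k\bmod 2)$, and its $r$-neighborhood intersected with $B(0,1/2)$ is contained in a product of a $d$-dimensional disc of radius $1/2$ with an $(n-d)$-dimensional ball of radius $r$, giving volume at most $C(n)\,r^{n-d}$. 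The same elementary Vitali covering reduction used for $\cH_{n,d}$ and $\cS\cH_{n,d}$ in the proof of Theorem \ref{t:main1} then yields the $(n-4+(k\bmod 2),C(n),1)$ covering profile.

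The only nonroutine input is step (c), where the nontrivial content of Lewy's theorem on the parity of the degree for separating harmonic polynomials in $\RR^3$ is what makes the bound sensitive to $k \bmod 2$; everything else is either linear algebra, Euler's identity, or already packaged in Theorem \ref{t:main2} and Lemma \ref{closedunderblowup}.
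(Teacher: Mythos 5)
Your proposal is correct and follows essentially the same route as the paper: identify $(\Sigma_p)_{\cH_{n,k-1}^\perp}$ via Theorem \ref{t:main2} with the set where all derivatives of order at most $k-1$ vanish, show this is the subspace of translation invariances of $p$, pass to the quotient $V^\perp$, and bound $\dim V^\perp$ from below using the planar classification of homogeneous harmonic polynomials and Lewy's parity theorem, then read off the covering profile from the fact that the singular parts are linear subspaces of controlled dimension. The only cosmetic differences (Euler's identity to see linearity of $V$, and a neighborhood-volume/Vitali argument instead of quoting the Grassmannian covering profile directly) do not change the substance of the argument.
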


\begin{proof} Suppose that $k$ and $n$ satisfy the hypothesis of the lemma and let $p:\RR^n\rightarrow\RR$ be a $k$ homogeneous harmonic polynomial. We will show that $(\Sigma_p)_{\cH_{n,k-1}}^\perp$ coincides with $$V=\{x_0\in \RR^n: p(x+x_0)=p(x)\hbox{ for all }x\in\RR^n\},$$ which is a linear subspace of $\RR^n$ because $p$ is $k$ homogeneous. To start, note that \begin{align*}
x_0\in (\Sigma_p)_{\cH_{n,k-1}^\perp}&\Longleftrightarrow \partial^\alpha p(x_0)=0\quad\text{for all }|\alpha|\leq k-1\\ &\Longleftrightarrow p(x+x_0)\equiv q(x)\text{ for some $q$, where $q:\RR^n\rightarrow\RR$ is $k$ homogeneous},\end{align*} where the first equivalence holds by Theorem \ref{t:main2} and the second equivalence holds by Taylor's theorem. Hence $V\subseteq (\Sigma_p)_{\cH_{n,k-1}^\perp}$, since $p$ is $k$ homogeneous. Conversely, using the homogeneity of $p$ and $q$, at any $x_0\in (\Sigma_p)_{\cH_{n,k-1}^\perp}$ we obtain $$p(x+x_0)=q(x)=\lambda^k q(x/\lambda)=\lambda^k p(x/\lambda +x_0)=p(x+\lambda x_0)\quad\text{for all } \lambda\in\RR\setminus\{0\}.$$ Letting $\lambda\rightarrow 0$, we conclude that $p(x+x_0)=p(x)$ for all $x\in\RR^n$ whenever $x\in(\Sigma_p)_{\cH_{n,k-1}^\perp}$. Thus, $(\Sigma_p)_{\cH_{n,k-1}^\perp}\subseteq V$, as well.

To continue, suppose that $\Sigma_p$ separates $\RR^n$ into two components. Let $\tilde p:V^\perp\rightarrow\RR$ be the image of $p$ under the quotient map $\RR^n\rightarrow\RR^n/V\cong V^\perp$. Because $V$ is the space of invariant directions for $p$, the map $\tilde p$ is still a degree $k$ homogenous harmonic polynomial (in orthonormal coordinates for $V^\perp$) and $$\Sigma_p=\Sigma_{\tilde p}\oplus V=\{x+v:x\in\Sigma_{\tilde p}\subseteq V^\perp, v\in V\}.$$ Hence $\Sigma_{\tilde p}$ separates $V^\perp$ into two components, since $\Sigma_p$ separates $\RR^n$ into two components. It follows that $\dim V^\perp\geq 4$, if $k\geq 2$ is even, and $\dim V^\perp\geq 3$, if $k\geq 3$ is odd; e.g., see the paragraph immediately preceding the statement of Lemma \ref{l:example}. Therefore, $\dim V\leq n-4$, if $k\geq 2$ is even, and $\dim V\leq n-3$, if $k\geq 3$ is odd.

Finally, by Theorem \ref{t:main2}, Remark \ref{closedclasses}, and the first part of the lemma,  $$\sing{\cH_{n,k-1}}{\overline{\starH_{n,k}}}=\left\{(\Sigma_p)_{\cH_{n,k-1}^\perp}: \Sigma_p\in\overline{\starF_{n,k}}\right\}\subseteq\left\{(\Sigma_p)_{\cH_{n,k-1}^\perp}: \Sigma_p\in\cF_{n,k}^{**}\right\}\subseteq\bigcup_{i=0}^{j} G(n,i),$$ where $j=n-4$, if $k\geq 2$ is even, and $j=n-3$, if $k\geq 3$ is odd. Here each $G(n,i)$ denotes the Grassmannian of dimension $i$ linear subspaces of $\RR^n$, which possesses an $(i,C(i),1)$ covering profile; that is, $V\cap B(0,r)$ can be covered by $C(i)s^{-i}$ balls $B(v_i,sr)$ centered in $V\cap B(0,r)$ for all planes $V\in G(n,i)$, $r>0$, and $0<s\leq 1$. (For example, this follows from the fact that Lebesgue measure of any ball of radius $r$ in $\RR^i$ is proportional to $r^i$.) It follows that the class $\sing{\cH_{n,k-1}}{\overline{\starH_{n,k}}}$ has an $(n-4,C(n),1)$ covering profile when $k\geq 2$ is even, and $\sing{\cH_{n,k-1}}{\overline{\starH_{n,k}}}$ has an $(n-3,C(n),1)$ covering profile when $k\geq 3$ is odd.
\end{proof}

The covering profiles for $\sing{\cH_{n,k-1}}{\overline{\starH_{n,k}}}$ from  Lemma \ref{l:subspace} will enable us to prove (ii) in Theorem \ref{t:main4} and also to prove that $A\setminus A_1$ has Hausdorff dimension at most $n-3$. However, to show that $A\setminus A_1$ has upper Minkowski dimension at most $n-3$, we need to find covering profiles for $\sing{\cH_{n,1}}{\overline{\cH_{n,d}^*}}$, whose existence does not automatically follow from the covering profiles in Lemma \ref{l:subspace}. To proceed, we use the quantitative stratification and volume estimates for singular sets of harmonic functions developed by Cheeger, Naber, and Valtorta in \cite{cheegernabervaltorta}. The following description of the stratification combines several definitions from \S1 of \cite{cheegernabervaltorta}; see \cite[Definition 1.4, Definition 1.7, Remark 1.8, and Definition 1.9]{cheegernabervaltorta}.

\begin{definition}[\cite{cheegernabervaltorta}; quantitative stratification by symmetry] A smooth function $u:\RR^n\rightarrow\RR$ is called \emph{$0$-symmetric} if $u$ is a homogeneous polynomial and $u$ is called \emph{$k$-symmetric} if $u$ is $0$-symmetric and there exists a $k$-dimensional subspace $V$ such that $$u(x+y)=u(x)\quad\text{for all $x\in\RR^n$ and $y\in V$}.$$ For all smooth $u:B(0,1)\rightarrow\RR$, and for all $x\in B(0,1-r)$, define $$T_{x,r}u(y) =\frac{u(x+ry)- u(x)}{\left(\fint_{\partial B(0,1)}|u(x+rz) - u(x)|^2\,d\sigma(z)\right)^{1/2}}\quad\text{for all }y\in B(0,1).$$ (If the denominator vanishes, set $T_{x,r}=\infty$.)
A harmonic function $u: B(0,1)\rightarrow \mathbb R$ is called \emph{$(k, \varepsilon, r, x)$-symmetric} if there exists a harmonic $k$-symmetric function $p$ with $\int_{\partial B(0,1)} |p|^2\, d\sigma = 1$ such that $$\fint_{B(0,1)}|T_{x,r}u - p|^2 < \varepsilon.$$ For all harmonic $u:B(0,1)\rightarrow\RR$, define the \emph{$(k,\eta,r)$-effective singular stratum} by $$\mathscr{S}^k_{\eta, r}(u) = \{x\in B(0,1): \hbox{$u$ is not $(k+1, \eta, s, x)$-symmetric for all $s\geq r$}\}.$$

\end{definition}

For harmonic functions, \cite[Theorem 1.10]{cheegernabervaltorta} gives the following Minkowski type estimates for effective singular strata. In the statement, $N(1,0,u)$ denotes Almgren's frequency function with $r=1$, $x_0=0$, and $f=u$ (recall Definition \ref{almgrenfrequency} above).

\begin{theorem}[\cite{cheegernabervaltorta}]\label{cnvestimate}
If $u: B(0,1)\rightarrow \mathbb R$ is a harmonic function with $u(0)=0$ and $N(1,0,u) \leq \Lambda<\infty$, then for every $\eta>0$ and $k\leq n-2$,
\begin{equation}\label{volumeestimateonsymmetricpoints}
\mathrm{Vol}(\{x\in B(0,1/2):\dist(x,\mathscr{S}^k_{\eta,r}(u))<r\}) \leq C(n,\Lambda, \eta)r^{n-k-\eta}.
\end{equation}
\end{theorem}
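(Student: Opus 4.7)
The plan is to follow the quantitative stratification scheme of Cheeger--Naber, powered by the two fundamental tools already introduced in \S3: Almgren's frequency $N(r,x,u)$ and its monotonicity, together with the doubling estimate of Lemma \ref{l:doublinglemma}. The overall strategy is to quantify how much the frequency can ``drop'' across scales, and to translate frequency drop into nearby $0$-symmetry by a compactness argument, then combine many such near-symmetries into an effective $(k{+}1)$-symmetry using a cone-splitting lemma. Counting balls at which cone-splitting fails then gives the Minkowski bound.

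\textbf{Step 1: Quantitative rigidity.} The first input is a compactness statement: for every $\eta>0$ and $\Lambda<\infty$ there exists $\delta=\delta(n,\Lambda,\eta)>0$ such that if $u$ is harmonic on $B(x,2r)$ with $N(2r,x,u)\leq \Lambda$ and
$$N(2r,x,u)-N(r,x,u)<\delta,$$
then $u$ is $(0,\eta,r,x)$-symmetric. This should be proved by contradiction and a normal families argument using Lemma \ref{l:doublinglemma} to extract a limit of the rescalings $T_{x,r}u$, and then invoking the fact that, when $u$ is harmonic, $N(\cdot,x,u)$ is constant on an interval if and only if $u$ is a homogeneous harmonic polynomial about $x$ (this is the equality case in Almgren's monotonicity, obtainable by differentiating \eqref{logderofH}).

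\textbf{Step 2: Cone-splitting.} The second ingredient is a quantitative cone-splitting lemma: if $u$ is simultaneously $(0,\delta',r,x_0)$-, $\dots$, $(0,\delta',r,x_k)$-symmetric for points $x_0,\dots,x_k\in B(x,r)$ which are $\rho$-effectively independent (i.e.\ $x_i\notin B_{\rho r}(\mathrm{span}\{x_0,\dots,x_{i-1}\})$), then $u$ is $(k,\eta,r,x)$-symmetric, provided $\delta'\leq \delta'(n,\Lambda,\eta,\rho)$. Again one argues by contradiction and compactness: a limit function is $0$-symmetric about each $x_i$, and since $0$-symmetry about two distinct points of a homogeneous polynomial forces translation invariance along the line connecting them, iterating yields $k$-symmetry.

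\textbf{Step 3: Covering at each scale.} With these lemmas in hand, fix a sequence of dyadic scales $r_j=2^{-j}$ for $j=0,1,\dots,J$ with $r_J\sim r$. At each scale $j$, cover $B(0,1/2)$ by balls $B(x_\alpha^j,r_j)$ with bounded overlap and call such a ball \emph{bad} if either (a) the frequency drop $N(10 r_j,x_\alpha^j,u)-N(r_j/10,x_\alpha^j,u)\geq \delta$, or (b) within $B(x_\alpha^j,r_j)$ the points which are $(0,\delta',r_j,\cdot)$-symmetric all lie within some $\rho r_j$-neighborhood of a $k$-plane. The key observation (from Steps 1--2) is that if $x\in \mathscr{S}^k_{\eta,r}(u)$ and $r_j\geq r$, then the ball $B(x_\alpha^j,r_j)$ containing $x$ must be bad.

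\textbf{Step 4: Counting bad balls and summing.} The balls of type (a) are controlled by a telescoping/pigeonhole argument: since $0\leq N(\cdot,x,u)\leq \Lambda$, the total frequency drop at any fixed point across the $J\sim\log(1/r)$ scales is at most $\Lambda$, so at most $\Lambda/\delta$ scales can contribute a type-(a) ball through any given point. Integrating over space then bounds the total type-(a) contribution. The balls of type (b) are $(k,\rho r_j)$-concentrated near a $k$-plane, so each can be covered by $C(n)\rho^{-k}$ balls of radius $\rho r_j$; choosing $\rho$ small enough (depending on $\eta$), this absorbs into an $r_j^{-k-\eta/2}$ gain. Combining, the total volume of the $r$-neighborhood of $\mathscr{S}^k_{\eta,r}(u)$ is bounded by $C(n,\Lambda,\eta)r^{n-k-\eta}$.

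\textbf{Main obstacle.} The principal difficulty is organizing the multi-scale covering cleanly enough that the type-(b) count does not lose a factor of $\log(1/r)$ at each of the $\log(1/r)$ scales (which would destroy the polynomial bound). The standard remedy, due to Cheeger--Naber, is a ``tree construction'': refine only inside bad balls, and balance the branching against the frequency-drop budget from Step 1. Verifying that the resulting tree has at most $C r^{-k-\eta}$ leaves at scale $r$ is the combinatorial heart of the argument, and is where the $\eta$-loss in the exponent arises.
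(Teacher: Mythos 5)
This statement is not proved in the paper at all: it is quoted verbatim as \cite[Theorem 1.10]{cheegernabervaltorta}, so there is no internal argument to compare against. Your outline reproduces essentially the strategy of the cited proof of Cheeger--Naber--Valtorta (quantitative rigidity from frequency pinching via Almgren monotonicity, quantitative cone-splitting, and the Cheeger--Naber multi-scale decomposition into frequency-drop patterns to avoid logarithmic losses), and you correctly flag the tree/counting step as the combinatorial heart; as a sketch it is the right approach, with the remaining work being exactly the detailed covering argument you defer.
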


We now show that if $\eta$ is small enough depending on $n$, $d$, and $M^*$, then the singular set of $\Sigma_p\in\starH_{n,d}$ is contained in $\mathscr S^{n-3}_{\eta, r}(p)$.

\begin{lemma}\label{criticalinsidestrata} For all $n\geq 2$, $d\geq 2$, and $M^*>1$, there exists $\overline{\eta}>0$ with the following property. If $\Sigma_p \in \starH_{n,d}$, $x_0\in\Sigma_p$, and $p$ is $(n-2, \eta, r, x_0)$-symmetric for some $\eta\in(0,\overline{\eta})$ and $r>0$, then $x_0$ is an $\cF_{n,1}$ point of $\Sigma_p$. Consequently, the set of all singular points of $\Sigma_p$ (that is, $\cF_{n,2}\cup\dots\cup\cF_{n,d}$ points of $\Sigma_p$) belongs to $\mathscr {S}^{n-3}_{\eta,r}(p)$ for all $\eta\in(0,\overline{\eta})$ and $r>0$.
\end{lemma}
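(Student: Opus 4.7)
The plan is a compactness/contradiction argument in which the two-sided NTA structure passes to limits via Lemma~\ref{closedunderblowup}. Suppose no $\overline\eta>0$ with the stated property exists. Then there are $\Sigma_{p_i}\in\starH_{n,d}$, points $x_i\in\Sigma_{p_i}$, scales $r_i>0$, and $\eta_i\downarrow 0$ such that $p_i$ is $(n-2,\eta_i,r_i,x_i)$-symmetric while $x_i$ is a singular point of $\Sigma_{p_i}$. By Theorem~\ref{t:main2} applied with $k=1$, this singularity condition is the statement $\nabla p_i(x_i)=0$. Since $\starH_{n,d}$ is translation invariant and a cone (Remark~\ref{closedclasses}), I would replace $p_i$ by the renormalization $\tilde p_i(y):=p_i(x_i+r_iy)\big/\|p_i(x_i+r_i\cdot)\|_{L^2(\partial B(0,1))}$, which yields $\Sigma_{\tilde p_i}\in \starH_{n,d}$, $\tilde p_i(0)=0$, $\nabla\tilde p_i(0)=0$, and $\|\tilde p_i\|_{L^2(\partial B(0,1))}=1$. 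The $(n-2,\eta_i,1,0)$-symmetry then supplies $(n-2)$-symmetric harmonic polynomials $q_i$---each homogeneous of some degree $k_i\geq 1$ and invariant under an $(n-2)$-dimensional subspace $V_i\subseteq\RR^n$---with $\|q_i\|_{L^2(\partial B(0,1))}=1$ and $\|\tilde p_i-q_i\|_{L^2(B(0,1))}\to 0$.

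Next I would extract limits in two stages. The $\tilde p_i$ lie in the finite-dimensional space of harmonic polynomials of degree at most $d$ and have unit $L^2$ norm on the sphere, so Lemma~\ref{l:height} bounds their coefficients uniformly; after passing to a subsequence $\tilde p_i\to\tilde p$ in coefficients (hence in every $C^\ell$ on compact sets), where $\tilde p$ is a harmonic polynomial of degree at most $d$ with $\tilde p(0)=0$, $\nabla\tilde p(0)=0$, and $\|\tilde p\|_{L^2(\partial B(0,1))}=1$. For the $q_i$, integration in polar coordinates together with homogeneity gives the identity $\|q_i\|_{L^2(B(0,1))}^2=1/(2k_i+n)$; combined with the lower bound $\|\tilde p_i\|_{L^2(B(0,1))}\gtrsim_{n,d} 1$ from Lemma~\ref{l:L2L2} and $\|\tilde p_i-q_i\|_{L^2(B(0,1))}\to 0$, this forces the degrees $k_i$ to remain bounded. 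Passing to further subsequences yields $k_i\equiv k$, $V_i\to V$ in $G(n,n-2)$, and $q_i\to q$ in coefficients, where $q$ is a $k$-homogeneous harmonic polynomial invariant under $V$. By uniqueness of $L^2$ limits, $\tilde p=q$, so $\tilde p$ is itself $k$-homogeneous and $V$-invariant.

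I would now close with a topological contradiction. By Lemma~\ref{convergenceofzerosets}, $\Sigma_{\tilde p_i}\to\Sigma_{\tilde p}$ in the Attouch-Wets topology, so Lemma~\ref{closedunderblowup} guarantees that $\RR^n\setminus\Sigma_{\tilde p}$ is a union of two complementary NTA domains. On the other hand, because $\tilde p$ is invariant under the $(n-2)$-dimensional subspace $V$, its restriction $\tilde p|_{V^\perp}$ is a nonzero homogeneous harmonic polynomial of degree $k\geq 1$ on $V^\perp\cong\RR^2$, whose zero set in $V^\perp$ consists of $k$ equiangular lines through the origin; consequently $\Sigma_{\tilde p}=V\oplus\Sigma_{\tilde p|_{V^\perp}}$ separates $\RR^n$ into exactly $2k$ connected components. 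If $k\geq 2$, then $2k\geq 4$, directly contradicting the two-component conclusion. If $k=1$, then $\tilde p$ is a nonzero linear function whose gradient is a nonzero constant vector, contradicting $\nabla\tilde p(0)=0$ (which was inherited from $\nabla\tilde p_i(0)=0$ under the uniform $C^1$ convergence on compact sets). Either way, the initial assumption fails.

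The second assertion is an immediate contrapositive: if $x_0\in\Sigma_p$ is a singular point and $\eta<\overline\eta$, then the first part prevents $p$ from being $(n-2,\eta,s,x_0)$-symmetric at any scale $s>0$, so $x_0\in\mathscr S^{n-3}_{\eta,r}(p)$ for every $r>0$. The most delicate step in the plan is the degree bound on the $q_i$: without it, the $q_i$ could escape to high-frequency spherical harmonics with vanishing $L^2(B(0,1))$ mass, and the two-sided NTA argument would have nothing to bite on. The orthogonality identity $\|q_i\|_{L^2(B(0,1))}^2=1/(2k_i+n)$ together with the comparability from Lemma~\ref{l:L2L2} is precisely what keeps the degrees bounded and allows the blowup scheme to produce a meaningful limit $\tilde p=q$ on which the topology does the final work.
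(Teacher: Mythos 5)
Your proposal is correct and follows essentially the same compactness/contradiction scheme as the paper's proof: blow up at the alleged singular symmetric points, pass to limits in coefficients, use Lemma \ref{convergenceofzerosets} together with Lemma \ref{closedunderblowup} (i.e.\ $\overline{\starH_{n,d}}\subseteq\cH_{n,d}^{**}$) to keep the two-sided NTA separation in the limit, identify the limit with a homogeneous harmonic polynomial of at most two variables, and derive a topological contradiction. The only differences are cosmetic: you bound the degrees of the symmetric approximants via the polar-coordinate identity $\|q_i\|_{L^2(B(0,1))}^2=1/(2k_i+n)$ rather than the paper's spherical-harmonic orthogonality argument, and you dispose of the degree-one (and implicitly degree-zero) case via $\nabla\tilde p(0)=0$ instead of concluding $\deg q\geq 2$ before identifying $q$ with the symmetric limit.
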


\begin{proof}
Let $n\geq 2$, $d\geq 2$, and $M^*>1$ be given. Assume in order to obtain a contradiction that for all $i\geq 1$, there exist $\Sigma_{p_i} \in \starH_{n,d}$, $\eta_i<1/i$, $x_i\in\Sigma_{p_i}$, and $r_i>0$ such that $p_i$ is $(n-2,\eta_i,r_i,x_i)$-symmetric and $x_i$ is not an $\cF_{n,1}$ point of $\Sigma_{p_i}$. Equivalently, by Theorem \ref{t:main2}, $D p_i(x_i)=0$. That is, the Taylor expansion for $p_i$ at $x_i$ has no nonzero linear terms. By definition of almost symmetry, there exist $(n-2)$-symmetric homogenous harmonic polynomials $h_i$ such that $\fint_{\partial B(0,1)} |h_i|^2\, d\sigma = 1$ and \begin{equation}\label{e:L2diff} \fint_{B(0,1)} \left|T_{x_i, r_i} p_i - h_i\right|^2 < \frac{1}{i}.\end{equation}  As everything is translation, dilation, and rotation invariant, we may assume without loss of generality that for all $i\geq 1$, $x_i=0$, $r_i=1$, and $h_i(y_1,y_2,\dots,y_n)=h_i(y_1,y_2,0,\dots,0)$ for all $y\in \mathbb R^n$. To ease notation, let us abbreviate $q_i\equiv T_{0,1}p_i$. We note that \begin{equation}\label{e:qnorm}\|q_i\|_{L^2(B(0,1))}\sim_{n,d} \|q_i\|_{L^2(\partial B(0,1))}\sim_{n,d}1\quad\text{for all }i\geq 1,\end{equation} where the first comparison holds by Lemma \ref{l:L2L2} and the second comparison holds by the definition of $T_{0,1}p_i$.

We now claim that $\deg h_i \leq d$ for all $i$ sufficiently large. To see this, suppose to the contrary that $l:=\deg h_i>d$ for some $i\geq 1$. Recalling both that spherical harmonics of different degrees are orthogonal on spheres centered at the origin and that  $h_i$ is $l$ homogeneous with $l>\deg q_i$, we have $$1\sim_{n,d}\|q_i\|_{L^2(B(0,1))}^2\lesssim_{n,d}\fint_{B(0,1)} \left(q_i^2+h_i^2\right)=\fint_{B(0,1)} |q_i-h_i|^2<\frac{1}{i}$$ by \eqref{e:L2diff} and \eqref{e:qnorm}. This is impossible if $i$ is sufficient large depending only on $n$ and $d$. Thus, $\deg h_i\leq d$ for all $i$ sufficient large, as claimed. In particular, \begin{equation}\label{e:hnorm}\|h_i\|_{L^2(B(0,1))}\sim_{n,d} \|h_i\|_{L^2(\partial B(0,1))}\sim_{n,d} 1\quad\text{for all }i\gtrsim_{n,d} 1.\end{equation}

By \eqref{e:qnorm}, \eqref{e:hnorm}, Lemma \ref{l:height}, and Lemma \ref{supcomparabletoaverage}, we conclude that the heights $H(q_i)\sim_{n,d} 1$ and $H(q_i)\sim_{n,d} 1$ for all sufficiently large $i$. Therefore, by passing to a subsequence of the pair $(q_i,h_i)_{i=1}^\infty$ (which we relabel), we may assume that $q_i\rightarrow q$ in coefficients and $h_i\rightarrow h$ in coefficients for some nonconstant harmonic polynomials $q$ and $h$ of degree at most $d$. On one hand, we have $\Sigma_q\in\overline{\cH_{n,d}^*}\subseteq \cH_{n,d}^{**}$  by Lemma \ref{convergenceofzerosets} and $Dq(0)=0$, since $Dq_i(0)=0$ for all $i$. Hence $q$ has degree at least $2$. On the other hand, we have $h$ is homogeneous and $h(y_1,y_2,\dots,y_n)=h(y_1,y_2,0,\dots,0)$ for all $y\in\RR^n$, because the same are true of the polynomial $h_i$ for all $i\gtrsim_{n,d} 1$.

We are now ready to obtain a contradiction. Since $q_i\rightarrow q$ and $h_i\rightarrow h$ uniformly on compact sets (see Remark \ref{r:coeff}), we have $q\equiv h$ by \eqref{e:L2diff}. Thus, $\Sigma_q\in \cF_{n,k}^{**}$ for some $2\leq k\leq d$---in particular, $\Sigma_q$ is the zero set of a homogeneous harmonic polynomial of degree at least 2 that separates $\RR^n$ into 2 components---and $q$ depends on at most 2 variables. No such polynomial $q$ exists (e.g.~ see Remark \ref{closedclasses})! Therefore, for all $n\geq 2$, $d\geq 2$, and $M^*>1$, there exists $\overline{\eta}>0$ such that if $\Sigma_p \in \starH_{n,d}$, $x_0\in\Sigma_p$, and $p$ is $(n-2, \eta, r, x_0)$-symmetric for some $\eta\in(0,\overline{\eta})$ and $r>0$, then $x_0$ is an $\cF_{n,1}$ point of $\Sigma_p$. Consequently, if $\Sigma_p\in\starH_{n,d}$ and $x_0\in\Sigma_p$ belongs to the singular set of $p$, then $p$ is not $(n-2,\eta,r,x_0)$ symmetric for all $\eta\in (0,\overline{\eta})$ and $r>0$. By definition of the singular strata, we conclude that for all $\Sigma_p\in\cH_{n,d}^*$ the set of all singular points of $\Sigma_p$ belongs to $\mathscr {S}^{n-3}_{\eta,r}(p)$ for all $\eta\in(0,\overline{\eta})$ and $r>0$. \end{proof}

At last, we are ready to prove Theorem \ref{t:main4} and Theorem \ref{t:main3}.

\begin{proof}[Proof of Theorem \ref{t:main4} and Theorem \ref{t:main3}] As noted earlier, Theorem \ref{t:main4} implies Theorem \ref{t:main3} by Lemma \ref{puttingthestarin}. Thus, it suffices to establish the former. Assume $A\subseteq\RR^n$ is closed and locally bilaterally well approximated by $\cH_{n,d}^*$ for some $M^*>1$. Then $A$ can be written as $A=A_1\cup A_2\cup\dots\cup A_d$ according to Theorem \ref{t:main1}. In particular, $U_k=A_1\cup\dots \cup A_k$ is relatively open in $A$ and locally bilaterally well approximated by $\cH_{n,k}$ for all $1\leq k\leq d$. Hence $U_k$ is also locally bilaterally well approximated by $\cH_{n,k}^{**}$ for all $1\leq k\leq d$, because $\PsTan(A,x)\subseteq\overline{\cH_{n,d}^*}\cap\cH_{n,k}\subseteq\cH_{n,k}^{**}$ for all $x\in U_k$ by Theorem \ref{t:tangent-well} and Remark \ref{closedclasses}. Also, $A\setminus A_1$ is closed in $\RR^n$, because $A_1$ is relatively open in $A$ and $A$ is closed in $\RR^n$, and $A_k$ is $\sigma$-compact for each $k\geq 1$, because $A_k$ is relatively closed in $U_k$, $U_k$ is relatively open in $A$, and $A$ is closed in $\RR^n$. Our goal is to prove that (i) $\udim_M A\setminus A_1 \leq n-3$ and (ii) $\dim_H A_{k}\leq n-4$ for all even $k\geq 2$.

We begin with a proof of (i). By Remark \ref{closedclasses}, $\overline{\cH_{n,d}^{**}}$ is translation invariant and $\cH_{n,1}$ points are detectable in $\cH_{n,d}^{**}$. Thus, $A\setminus A_1$ is locally unilaterally well approximated by $\sing{\cH_{n,1}}{\overline{\cH_{n,d}^{**}}}$ by Theorem \ref{singapprox}. By Lemma \ref{criticalinsidestrata}, Lemma \ref{frequencyofpolynomials}, and Theorem \ref{cnvestimate}, the class $\sing{\cH_{n,1}}{\overline{\cH_{n,d}^{**}}}$ admits an $(n-3+\eta,C(n,d,\eta,M^{**}),1)$ covering profile for all $\eta>0$. Thus, since $A\setminus A_1$ is closed, we have $\udim_M A\setminus A_1\leq n-3+\eta$ for all $\eta>0$ by Theorem \ref{c:dim3}. Letting $\eta\downarrow 0$, we conclude $\udim_M A\setminus A_1\leq n-3$, as desired.

We now prove (ii). Let $k\geq 2$ be even. By Remark \ref{closedclasses}, $\overline{\cH_{n,k-1}^{**}}$ is translation invariant and $\cH_{n,k-1}$ points are detectable in $\cH_{n,k}^{**}$. Thus, $A_k=U_k\setminus U_{k-1}$ is locally unilaterally well approximated by $\sing{\cH_{n,k-1}}{\overline{\cH_{n,k}^{**}}}$ by Theorem \ref{singapprox}. By Lemma \ref{l:subspace}, the class  $\sing{\cH_{n,k-1}}{\overline{\cH_{n,k}^{**}}}$ admits an $(n-4,C(n),1)$ covering profile. Thus,  since $A_k$ is $\sigma$-compact, we have $\dim_H A_k\leq n-4$ by Theorem \ref{c:dim5}, as desired. Because Hausdorff dimension is stable under countable unions, $\dim_H A_2\cup A_4\cup\cdots\leq n-4$, as well.
\end{proof}

\section{Boundary structure in terms of interior and exterior harmonic measures}

Harmonic measure arises in classical analysis from the solution of the Dirichlet problem and in probability as the exit distribution of Brownian motion. For nice introductions to harmonic measure, see the books of Garnett and Marshall \cite{garnettandmarshall} and M\"orters and Peres \cite{brownian}. One of our motivations for this work is the desire to understand the extent to which the structure of the boundary of a domain in $\RR^n$, $n\geq 2$, is determined by the relationship between harmonic measures in the interior and the exterior of the domain. This problem can be understood as a free boundary regularity problem for harmonic measure. For an in-depth introduction to free boundary problems for harmonic measure, see the book of Capogna, Kenig, and Lanzani \cite{capognakeniglanzani}.

Given a simply connected domain $\Omega\subset\RR^2$,
bounded by a Jordan curve, let $\omega^+$ and $\omega^-$ denote the harmonic measures associated to $\Omega^+=\Omega$ and $\Omega^-=\RR^2\setminus\overline{\Omega}$, respectively, which are supported on their common boundary $\partial\Omega=\partial\Omega^+=\partial\Omega^-$. Together, the theorems of McMillan, Makarov, and Pommerenke (see \cite[Chapter VI]{garnettandmarshall}) show that $$\omega^+\ll\omega^-\ll\omega^+\quad\Longrightarrow\quad \omega^+\ll\cH^1|_G\ll\omega^+\text{ and }\omega^-\ll \cH^1|_G\ll\omega^-$$ for some set $G\subseteq\partial\Omega$ with $\sigma$-finite 1-dimensional Hausdorff measure and $\omega^\pm(\partial\Omega\setminus G)=0$; furthermore, in this case, $\partial\Omega$ possesses a unique tangent line at $Q$ for $\omega^\pm$-a.e.~ $Q\in\partial\Omega$. Here $\cH^s$ denotes the $s$-dimensional Hausdorff measure of sets in $\RR^n$.
 Motivated by this result, Bishop \cite{Bishop-questions} asked
whether if on a domain in
$\RR^n$, $n\ge 3$, \begin{equation}\label{e:bishop} \omega^+\ll\omega^-\ll\omega^+\quad\Longrightarrow\quad \omega^+\ll\cH^{n-1}|_G\ll\omega^+\text{ and }\omega^-\ll \cH^{n-1}|_G\ll\omega^-\end{equation} for some  $G\subseteq\partial\Omega$ with $\sigma$-finite $(n-1)$-dimensional Hausdorff measure and $\omega^\pm(\partial\Omega\setminus G)=0$.
In \cite{kenigpreisstoro}, Kenig, Preiss, and Toro proved that when $\Omega^+=\Omega\subset\RR^n$ and $\Omega^-=\RR^n\setminus\overline{\Omega}$ are NTA domains in $\RR^n$, $n\geq 3$, the mutual absolute continuity of $\omega^+$ and $\omega^-$ on a set $E\subseteq\partial\Omega$ implies that
$\omega^\pm|_E$ has \emph{upper Hausdorff dimension} $n-1$: there exists a set $E'\subseteq E$ of Hausdorff dimension $n-1$ such that $\omega^\pm(E\setminus E')=0$, and $\omega^\pm(E\setminus E'')>0$ for every set $E''\subset E$ with $\dim_H E''<n-1$. Moreover, in this case $\omega^\pm|_E\ll \cH^{n-1}|_E\ll \omega^\pm|_E$ provided that $\cH^{n-1}|_{\partial\Omega}$ is locally finite (see Badger \cite{badger-null}, also \cite[Remark 6.19]{badgerflatpoints}).
However, at present it is still unknown whether or not \eqref{e:bishop} holds on domains for which $\cH^{n-1}|_{\partial\Omega}$ is not locally finite. For some related inquiries, see the work of Lewis, Verchota, and Vogel \cite{LVV}, Azzam and Mourgoglou \cite{AM-tangents}, Bortz and Hofmann \cite{bortzhofmann}, and the references therein.

\begin{remark}[Added in February 2017] Several months after the first version of this paper appeared on the arXiv in September 2015, a solution to Bishop's conjecture \eqref{e:bishop} was furnished by Azzam, Mourgoglou, and Tolsa \cite{amt-twophase} and by Azzam, Mourgoglou, Tolsa, and Volberg \cite{amtv-twophase}. An important tool in these works is a new ``bounded Riesz transform" to ``uniform rectifiability" criterion of Girela-Sarri\'on and Tolsa \cite{gs-t}.
\end{remark}

Finer information about the structure and size of the boundary under more stringent assumptions on the relationship between $\omega^+$ and $\omega^-$ has been obtained in \cite{kenigtorotwophase}, \cite{badgerharmonicmeasure}, \cite{badgerflatpoints}, \cite{localsetapproximation}, and \cite{engelsteintwophase}. We summarize these results in Theorem \ref{previousvmoresults} after recalling the definition of the space $\VMO(d\omega)$ of functions of vanishing mean oscillation, which extends the space of uniformly continuous bounded functions on $\partial\Omega$.

\begin{definition}[{\cite[Definition 4.2 and Definition 4.3]{kenigtorotwophase}}] Let $\Omega\subset\RR^n$ be an NTA domain (with the NTA constant $R=\infty$ when $\partial\Omega$ is unbounded) equipped with harmonic measure $\omega$. We say that  $f \in L^2_{\mathrm{loc}}(d\omega)$ belongs to $\BMO(d\omega)$ if and only if \begin{equation*}\label{bmodef}
\sup_{r > 0} \sup_{Q\in \partial \Omega} \left(\fint_{B(Q,r)} |f-f_{Q,r}|^2\, d\omega\right)^{1/2} < \infty,
\end{equation*}
where $f_{Q,r} = \fint_{B(Q,r)}f \, d\omega$ denotes the average of $f$ over the ball. We denote by $\VMO(d\omega)$ the closure in $\BMO(d\omega)$ of the set of uniformly continuous bounded functions on $\partial\Omega$.
\end{definition}

\begin{theorem}[{\cite{kenigtorotwophase,badgerharmonicmeasure,badgerflatpoints,localsetapproximation,engelsteintwophase}}] Assume \label{previousvmoresults} that $\Omega^+=\Omega\subset\RR^n$ and $\Omega^-=\RR^n\setminus\overline{\Omega}$ are NTA domains (with the NTA constant $R=\infty$ when $\partial\Omega$ is unbounded), equipped with harmonic measures $\omega^\pm$ on $\Omega^\pm$. If $\omega^+\ll\omega^-\ll\omega^+$ and the Radon-Nikodym derivative $f=d\omega^-/d\omega^+$ satisfies $\log f\in \VMO(d\omega^+)$, then the boundary $\partial\Omega$ satisfies the following properties. \begin{enumerate}
\item[{\cite{kenigtorotwophase}}] There exist $d\geq 1$ and $M^*>1$ depending on at most $n$ and the NTA constants of $\Omega^+$ and $\Omega^-$ such that $\partial\Omega$ is locally bilaterally well approximated by $\cH^*_{n,d}$.

\item[{\cite{badgerharmonicmeasure}}]$\partial\Omega$ can be partitioned into disjoint sets $\Gamma_k$, $1\leq k\leq d$, such that $x\in \Gamma_k$ if and only if $x$ is an $\cF_{n,k}$ point of $\partial\Omega$. Moreover, $\Gamma_1$ is dense in $\partial\Omega$ and $\omega^\pm(\partial\Omega\setminus \Gamma_1)=0$.
\item[{\cite{badgerflatpoints}}] $\Gamma_1$ is relatively open in $\partial\Omega$, $\Gamma_1$ is locally bilaterally well approximated by $\cH_{n,1}$, and $\Gamma_1$ has Hausdorff dimension $n-1$.
\item[{\cite{localsetapproximation}}] $\partial\Omega$ has upper Minkowski dimension $n-1$ and $\partial\Omega\setminus \Gamma_1=\Gamma_2\cup\dots\cup\Gamma_d$ has upper Minkowski dimension at most $n-2$.
\item[{\cite{engelsteintwophase}}] If $\log f\in C^{l,\alpha}$ for some $l\geq 0$ and $\alpha>0$ (resp.~$\log f\in C^\infty$, $\log f$ real analytic), then $\Gamma_1$ is a $C^{l+1,\alpha}$ (resp.~ $C^\infty$, real analytic) $(n-1)$-dimensional manifold.\end{enumerate}
\end{theorem}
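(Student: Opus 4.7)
The theorem is a synthesis, so the plan is to assemble it from several pieces rather than to run a single argument. Item~(i) is the deep PDE input from \cite{kenigtorotwophase}; once it is available, (ii)--(iv) will reduce to applications of the structural results developed in this paper to $A=\partial\Omega$, and (v) is the independent free-boundary regularity result of \cite{engelsteintwophase}.

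To obtain (i), I would invoke \cite{kenigtorotwophase} directly: under $\log f\in\VMO(d\omega^+)$, a Weiss-type monotonicity formula for harmonic measure together with a compactness/blowup argument for the pair $(\omega^+,\omega^-)$ shows that every pseudotangent measure of $\omega^\pm$ is (up to a positive multiple) the harmonic measure on a component of $\RR^n\setminus\Sigma_p$ for some nonconstant harmonic polynomial $p$ of degree at most some $d$ depending on $n$ and the NTA constants; in particular, every pseudotangent set of $\partial\Omega$ belongs to $\cH_{n,d}$. By Theorem~\ref{t:tangent-well}, this pseudotangent control is equivalent to local bilateral well approximation of $\partial\Omega$ by $\cH_{n,d}$, and since $\Omega^\pm$ are NTA, Lemma~\ref{puttingthestarin} then promotes this to approximation by $\cH^*_{n,d}$ for an $M^*$ depending only on the NTA constants.

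With (i) in place, I would feed $A=\partial\Omega$ into Theorem~\ref{t:main1}. This yields in one stroke the disjoint decomposition $\partial\Omega=\Gamma_1\cup\cdots\cup\Gamma_d$, the characterization of $\Gamma_k$ as the set of $\cF_{n,k}$ points of $\partial\Omega$, relative openness and density of $\Gamma_1$ in $\partial\Omega$, local bilateral Reifenberg-vanishing approximation of $\Gamma_1$ by $\cH_{n,1}$, $\udim_M\partial\Omega\leq n-1$, and $\udim_M(\partial\Omega\setminus\Gamma_1)\leq n-2$; the lower bound $\dim_H\Gamma_1\geq n-1$ then follows from Reifenberg's topological disk theorem (cf.\ Remark~\ref{r:reif}). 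This covers all of (iii), (iv), and the structural content of (ii). The only remaining piece of (ii)---the null-set statement $\omega^\pm(\partial\Omega\setminus\Gamma_1)=0$---is the main theorem of \cite{badgerharmonicmeasure}: combined with the pseudotangent description above, the $\VMO$ assumption forces the tangent polynomial $p$ at $\omega^\pm$-a.e.\ boundary point to have interior and exterior harmonic measures with constant Radon--Nikodym derivative, and a symmetry/uniqueness argument on the sphere (in spirit akin to the orthogonality step used to prove Lemma~\ref{zerosetsarefarapart}) rules out $\deg p\geq 2$, so $x\in\Gamma_1$.

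Finally, item~(v) is the free-boundary regularity theorem of \cite{engelsteintwophase}: once $\Gamma_1$ is known to be Reifenberg flat with vanishing constant, a Kinderlehrer--Nirenberg-style bootstrap for the two-phase problem propagates the regularity of $\log f$ onto $\Gamma_1$, yielding $C^{l+1,\alpha}$ (respectively $C^\infty$, real analytic) parametrizations. The main obstacle in the whole plan is unquestionably (i): the monotonicity formula and blowup analysis of \cite{kenigtorotwophase} constitute the genuine technical heart of the statement, whereas everything downstream is essentially a clean bookkeeping exercise on top of Theorem~\ref{t:main1} plus a pointwise blowup at harmonic-measure generic boundary points.
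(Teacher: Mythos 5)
Your assembly is correct, and it is essentially what the paper does: Theorem \ref{previousvmoresults} is stated purely as a summary of prior results, each item attributed to its source, so the ``proof'' is citation plus bookkeeping, exactly as you organize it. The one genuine difference is that you rederive the structural content of items (ii)--(iv) (the decomposition of $\partial\Omega$ into $\cF_{n,k}$ points, openness and density of $\Gamma_1$, the vanishing Reifenberg approximation of $\Gamma_1$ by $\cH_{n,1}$, and both Minkowski bounds) by feeding item (i) into Theorem \ref{t:main1}, rather than quoting \cite{badgerflatpoints} and \cite{localsetapproximation} as the item labels do; this is legitimate, and it is precisely the simplification highlighted in the remark following the theorem (the arguments of those papers required the decomposition of \cite{badgerharmonicmeasure} as a hypothesis, whereas Theorem \ref{t:main1} yields it directly), and it is the route the paper takes for the new Theorem \ref{ourvmoresults}; likewise, passing from $\cH_{n,d}$ to $\cH^*_{n,d}$ via Theorem \ref{t:tangent-well} and Lemma \ref{puttingthestarin} is consistent with the paper's own usage. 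Two caveats about your glosses of the cited proofs, neither fatal since you invoke the results rather than reprove them: the blowup analysis of \cite{kenigtorotwophase} is not driven by a Weiss-type monotonicity formula (the degree bound $d$ comes from uniform doubling of harmonic measure and growth estimates forcing the limiting entire harmonic function to be a polynomial), and your sketch of $\omega^\pm(\partial\Omega\setminus\Gamma_1)=0$ cannot be a pointwise rigidity ``ruling out $\deg p\geq 2$'': Szulkin-type configurations show that degree $\geq 2$ tangents do occur even with $f\equiv 1$, and the argument of \cite{badgerharmonicmeasure} instead uses separation at infinity of cones of polynomial harmonic measures of different degrees together with connectivity properties of tangent measures, which yields only the almost-everywhere conclusion.
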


\begin{remark} The statements from \cite{kenigtorotwophase} and \cite{badgerharmonicmeasure} recorded in Theorem \ref{previousvmoresults} were obtained by showing that the pseudotangent measures of the harmonic measures $\omega^\pm$ of $\Omega^\pm$ are ``polynomial harmonic measures"  in \cite{kenigtorotwophase} and by studying the ``separation at infinity" of cones of polynomial harmonic measures associated to polynomials of different degrees in \cite{badgerharmonicmeasure} (also see \cite{kenigpreisstoro}). The statements from \cite{badgerflatpoints} and \cite{localsetapproximation} are forerunners to and motivated the statement and proof of Theorem \ref{t:main1} in this paper. However, we wish to emphasize that the structure theorem \cite[Theorem 5.10]{badgerflatpoints} and dimension estimate on the singular set $\partial\Omega\setminus\Gamma_1$ in \cite[Theorem 9.3]{localsetapproximation} required existence of the  decomposition from \cite{badgerharmonicmeasure} as part of their hypotheses. \emph{By contrast, in this paper, we are able to establish the decomposition $A=A_1\cup\dots\cup A_d$ and obtain dimension estimates on the singular set $A\setminus A_1$ in Theorem \ref{t:main1} directly, without any reference to harmonic measure or dependence on \cite{badgerharmonicmeasure}.}
\end{remark}

Theorem \ref{t:main1} and \ref{t:main3} of the present paper yield several new pieces of information about the boundary of complimentary NTA domains with $\log f\in \VMO(d\omega^+)$, which we record in Theorem \ref{ourvmoresults}.

\begin{theorem}\label{ourvmoresults} Under the hypothesis of Theorem \ref{previousvmoresults}, the boundary $\partial\Omega=\Gamma_1\cup\dots\cup\Gamma_d$ satisfies the following additional properties.
\begin{enumerate}
\item For all $1\leq k\leq d$, the set $U_k:=\Gamma_1 \cup \ldots \cup \Gamma_k$ is relatively open in $\partial\Omega$ and $\Gamma_{k+1}\cup\dots\cup\Gamma_d$ is closed.
\item For all  $1\leq k\leq d$, $U_k$ is locally bilaterally well approximated by $\cH_{n,k}^{**}$.
\item For all $1\leq k\leq d$, $\partial\Omega$ is locally bilaterally well approximated along $\Gamma_k$ by $\cF_{n,k}^{**}$, i.e.~ $\limsup_{r\downarrow 0} \sup_{x\in K} \Theta_{\partial\Omega}^{\cF_{n,k}^{**}}(x,r)=0$ for every compact set $K\subseteq\Gamma_k$.
\item For all $1\leq l<k\leq d$, $U_l$ is relatively open in $U_k$ and $\Gamma_{l+1}\cup\dots\cup \Gamma_k$ is relatively closed in $U_k$.
\item $\partial\Omega\setminus\Gamma_1=\Gamma_2\cup\dots\cup\Gamma_d$ has upper Minkowski dimension at most $n-3$.
\item The ``even singular set" $\Gamma_2\cup\Gamma_4\cup\cdots$ has Hausdorff dimension at most $n-4$.
\item  When $n \geq 3$, the singular set $\partial\Omega\setminus\Gamma_1$ has Newtonian capacity zero.
\end{enumerate}
\end{theorem}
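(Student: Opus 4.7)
The approach is to deduce (vii) immediately from the Minkowski dimension bound in (v) together with the classical relationship between Hausdorff dimension and Newtonian capacity in $\mathbb{R}^n$. The key potential-theoretic input is that for $n \geq 3$ the Newtonian kernel $|x - y|^{2-n}$ has critical exponent $n - 2$: any Borel set $E \subseteq \mathbb{R}^n$ with $\dim_H E < n - 2$ satisfies $\mathrm{cap}(E) = 0$. My plan is simply to verify that the singular set lies strictly below the critical dimension and invoke this comparison.

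First I would observe that $\partial\Omega \setminus \Gamma_1$ is relatively closed in $\partial\Omega$ by (i), hence closed in $\mathbb{R}^n$, and by (v) has upper Minkowski dimension at most $n - 3$. Since Hausdorff dimension is always dominated by upper Minkowski dimension, this yields
$$\dim_H(\partial\Omega \setminus \Gamma_1) \leq n - 3 < n - 2,$$
the strict inequality using $n \geq 3$. To justify the dimension-to-capacity step I would give a direct covering argument (alternatively, one could cite Landkof's \emph{Foundations of Modern Potential Theory} or Adams--Hedberg's \emph{Function Spaces and Potential Theory}): fix $\varepsilon > 0$; since $\mathcal{H}^{n-2}(\partial\Omega \setminus \Gamma_1) = 0$, there is a countable cover of $\partial\Omega \setminus \Gamma_1$ by balls $\{B(x_i, r_i)\}_{i \geq 1}$ with $\sum_i r_i^{n-2} < \varepsilon$; combining countable subadditivity of Newtonian capacity with the standard estimate $\mathrm{cap}(B(x, r)) \lesssim_n r^{n-2}$ gives
$$\mathrm{cap}(\partial\Omega \setminus \Gamma_1) \leq \sum_i \mathrm{cap}(B(x_i, r_i)) \lesssim_n \sum_i r_i^{n-2} < C_n\, \varepsilon,$$
and sending $\varepsilon \downarrow 0$ yields the claim.

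There is no substantial obstacle once (v) is in hand; the whole argument rests on the gap between $n - 3$ and the critical exponent $n - 2$, which is precisely why the hypothesis $n \geq 3$ appears in the statement. I note that the sharper Hausdorff-dimension bound (vi) on the even singular set plays no role here: the Minkowski estimate on all of $\partial\Omega \setminus \Gamma_1$ from (v) already beats the critical exponent by a full unit, so the polarity refinement is not needed for this corollary.
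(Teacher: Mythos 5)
Your argument for part (vii) is fine, and it is essentially the paper's own argument for that part: the paper deduces (vii) from (v) by noting that Newtonian capacity in $\RR^n$, $n\geq 3$, is Riesz $(n-2)$-capacity and that sets of finite $(n-2)$-dimensional Hausdorff measure (here $\cH^{n-2}$-null, since $\dim_H\leq \udim_M\leq n-3<n-2$) have Riesz $(n-2)$-capacity zero, citing \cite{brownian} or \cite{Mattila95}; your covering argument via countable subadditivity and $\mathrm{cap}(B(x,r))\lesssim_n r^{n-2}$ is a perfectly good self-contained substitute for that citation.

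The genuine gap is that the statement to be proved is the entire seven-part theorem, and your proposal proves only (vii) while taking (i)--(vi) --- in particular (v) --- as given. Those parts are the substantive content and do not come for free: (i) and (iv) follow from Theorem \ref{t:main1} applied to $A=\partial\Omega$, which is locally bilaterally well approximated by $\cH^*_{n,d}$ by Theorem \ref{previousvmoresults}; (ii) and (iii) require more than Theorem \ref{t:main1}, because the approximating classes are upgraded from $\cH_{n,k}$ and $\cF_{n,k}$ to the 2-sided NTA restricted classes $\cH^{**}_{n,k}$ and $\cF^{**}_{n,k}$, which in the paper is done via the pseudotangent characterization of Theorem \ref{t:tangent-well} together with Lemma \ref{puttingthestarin} and Remark \ref{closedclasses} (showing $\PsTan(\partial\Omega,x)\subseteq\overline{\cH^*_{n,d}}\cap\cH_{n,k}\subseteq\cH^{**}_{n,k}$ for $x\in U_k$); and (v) and (vi) are exactly the conclusions of Theorem \ref{t:main3}/\ref{t:main4}, whose proof uses the NTA separation through Lemma \ref{criticalinsidestrata}, Theorem \ref{cnvestimate}, and Lemma \ref{l:subspace}. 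Without at least invoking these results for (i)--(vi), the proof of (vii) rests on an unproven input, so the proposal as written does not establish the stated theorem.
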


\begin{proof} Parts (i) and (iv) of the theorem are a direct consequence of Theorem \ref{t:main1}. Parts (ii) and (iii) follow from Theorem \ref{t:main1} in conjunction with Lemma \ref{puttingthestarin}, Theorem \ref{t:tangent-well}, and Remark \ref{closedclasses} (see the proof of Theorem \ref{t:main3}). Parts (v) and (vi) are a direct consequence of Theorem \ref{t:main3}. Newtonian capacity in $\RR^n$, $n\geq 3$, is precisely the Riesz $(n-2)$-capacity. Thus, part (vii) follows from (v) and the fact that sets of finite $s$-dimensional Hausdorff measure have Riesz $s$-capacity zero (see e.g.~ \cite[Chapter 4]{brownian} or \cite[Chapter 8]{Mattila95}). \end{proof}

\begin{remark} The dimension bounds (v) and (vi) in Theorem \ref{ourvmoresults} are sharp by example. See Remark \ref{r:sharp} and Remark \ref{closedclasses}. \end{remark}

\begin{remark} The fact that $\partial\Omega\setminus\Gamma_1$ has Newtonian capacity zero implies $\omega^\pm(\partial\Omega\setminus \Gamma_1)=0$; see \cite[Chapter 8]{brownian}.\end{remark}

\appendix

\section{Local set approximation}
\label{sect:2}

A general framework for describing bilateral and unilateral approximations of a set $A\subseteq\RR^n$ by a class $\cS$ of closed ``model" sets is developed in \cite{localsetapproximation}. In this appendix, we give a brief, self-contained abstract of the main definitions and theorems from this framework as used above, but refer the reader to \cite{localsetapproximation} for full details and further results. The principal results are two structure theorems for Reifenberg type sets; see Theorems \ref{t:open} and \ref{singapprox}.

\subsection{Distances between sets} If $A,B\subseteq\RR^n$ are nonempty sets, the \emph{excess of $A$ over $B$} is the asymmetric quantity defined by $\ex(A,B)=\sup_{a\in A}\inf_{b\in B}|a-b|\in[0,\infty]$. By convention, one also defines $\ex(\emptyset,B)=0$, but leaves $\ex(A,\emptyset)$ undefined. The excess is monotone, $$\ex(A,B)\leq \ex(A',B')\quad\text{whenever $A\subseteq A'$ and $B\supseteq B'$},$$ and satisfies the triangle inequality, $$\ex(A,C)\leq \ex(A,B)+\ex(B,C).$$ When $A=\{x\}$ for some $x\in\RR^n$, $\ex(\{x\},B)$ is usually called the \emph{distance of $x$ to $B$} and denoted by $\dist(x,B)$.

For all $x\in\RR^n$ and $r>0$, let $B(x,r)$ denote the open ball with center $x$ and radius $r$. (In \cite{localsetapproximation}, $B(x,r)$ denotes the closed ball, but see \cite[Remark 2.4]{localsetapproximation}.) For arbitrary sets $A,B\subseteq\RR^n$ with $B$ nonempty and for all $x\in\RR^n$ and $r>0$, define the \emph{relative excess of $A$ over $B$ in $B(x,r)$} by $$\mud{x}{r}(A,B)=r^{-1}\ex(A\cap B(x,r),B)\in [0,\infty).$$ Also, for all sets $A,B\subseteq\RR^n$ with $A$ and $B$ nonempty and for all $x\in\RR^n$ and $r>0$, define the \emph{relative Walkup-Wets distance between $A$ and $B$ in $B(x,r)$} by $$\mD{x}{r}[A,B]=\max\left\{\mud{x}{r}(A,B),\mud{x}{r}(B,A)\right\}\in[0,\infty).$$ Observe that $\mD{x}{r}[A,B]\leq 2$ if both $A\cap B(x,r)$ and $B\cap B(x,r)$ are nonempty; and $\mD{x}{r}[A,B]\leq 1$ if both $x\in A$ and $x\in B$.

\begin{lemma}[{\cite[Lemma 2.2, Remark 2.4]{localsetapproximation}}] \label{l:Dprop} Let $A,B,C\subseteq\RR^n$ be nonempty sets, let $x,y\in\RR^n$, and let $r,s>0$. \begin{itemize}
\item closure: $\mD{x}{r}[A,B] = \mD{x}{r}[A,\overline{B}] = \mD{x}{r}[\overline{A},\overline{B}] = \mD{x}{r}[\overline{A},B]$.
\item containment: $\mD{x}{r}[A,B]=0$ if and only if $\overline{A}\cap B(x,r)=\overline{B}\cap B(x,r)$.
\item quasimonotonicity: If $B(x,r)\subseteq B(y,s)$, then $\mD{x}{r}[A,B] \leq (s/r) \mD{y}{s}[A,B].$
\item strong quasitriangle inequality: If $\mud{x}{r}(A,B)\leq \varepsilon_1$ and $\mud{x}{r}(C,B)\leq \varepsilon_2$, then $$\mD{x}{r}[A,C] \leq (1+\varepsilon_2) \mD{x}{(1+\varepsilon_2)r}[A,B] + (1+\varepsilon_1) \mD{x}{(1+\varepsilon_1)r}[B,C].$$
\item weak quasitriangle inequalities: If $x\in B$, then $$\mD{x}{r}[A,C] \leq 2\mD{x}{2r}[A,B]+2\mD{x}{2r}[B,C].$$ If $B\cap B(x,r)\neq\emptyset$, then $$\mD{x}{r}[A,B]\leq 3\mD{x}{3r}[A,B]+3\mD{x}{3r}[B,C].$$
\item scale invariance: $\mD{x}{r}[A,B]=\mD{\lambda x}{\lambda r}[\lambda A,\lambda B]$ for all $\lambda>0$.
\item translation invariance: $\mD{x}{r}[A,B]=\mD{x+z}{r}[z+A,z+B]$ for all $z\in\RR^n$.
\end{itemize}
\end{lemma}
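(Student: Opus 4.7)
The plan is to reduce everything to a handful of facts about the one-sided relative excess $\mud{x}{r}(A,B)$, and to obtain each symmetric statement for $\mD{x}{r}[A,B]$ by taking the max over the two orderings of $A,B$. Several of the listed properties fall out immediately from the definitions. Closure holds because $\dist(a,B)=\dist(a,\overline B)$ for every $a\in\RR^n$, so $\ex(\cdot,B)=\ex(\cdot,\overline B)$ and all four expressions coincide. For containment, $\mD{x}{r}[A,B]=0$ forces every $a\in A\cap B(x,r)$ to lie in $\overline B$ (and vice versa), which, combined with the openness of $B(x,r)$, produces the stated equality $\overline A\cap B(x,r)=\overline B\cap B(x,r)$. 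Scale and translation invariance are immediate from the isometry properties of $y\mapsto\lambda y$ and $y\mapsto y+z$: these maps send $B(x,r)$ onto $B(\lambda x,\lambda r)$ or $B(x+z,r)$ and rescale distances by $\lambda$ or preserve them, exactly canceling the $r^{-1}$ in the definition.

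Quasimonotonicity is a one-line argument. If $B(x,r)\subseteq B(y,s)$, then $A\cap B(x,r)\subseteq A\cap B(y,s)$, so monotonicity of $\ex$ in its first argument gives
\[
r\,\mud{x}{r}(A,B) \;=\; \ex(A\cap B(x,r),B) \;\leq\; \ex(A\cap B(y,s),B) \;=\; s\,\mud{y}{s}(A,B),
\]
and dividing by $r$ yields $\mud{x}{r}(A,B)\leq(s/r)\mud{y}{s}(A,B)$. The symmetric version plus a max gives the claim for $\mD{x}{r}$.

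The only real work is the strong quasitriangle inequality, from which both weak versions will drop out as corollaries. Given $a\in A\cap B(x,r)$, I would pick $b\in B$ with $|a-b|\leq r\,\mud{x}{r}(A,B)\leq r\varepsilon_1$, which forces $b\in B\cap B(x,(1+\varepsilon_1)r)$; then the triangle inequality for $\dist(\cdot,C)$ gives
\[
\dist(a,C)\;\leq\;|a-b|+\dist(b,C)\;\leq\; r\,\mud{x}{r}(A,B)+(1+\varepsilon_1)r\,\mud{x}{(1+\varepsilon_1)r}(B,C).
\]
Dividing by $r$, bounding the first summand by $\mD{x}{r}[A,B]\leq(1+\varepsilon_2)\mD{x}{(1+\varepsilon_2)r}[A,B]$ via the quasimonotonicity just proved, and the second by $(1+\varepsilon_1)\mD{x}{(1+\varepsilon_1)r}[B,C]$, controls $\mud{x}{r}(A,C)$. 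A symmetric argument interchanging the roles of $(A,\varepsilon_1)$ and $(C,\varepsilon_2)$ controls $\mud{x}{r}(C,A)$ by the same two terms, and taking the max yields the stated asymmetric bound.

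The weak quasitriangle inequalities are special cases. When $x\in B$, every $a\in B(x,r)$ satisfies $\dist(a,B)\leq|a-x|<r$, so $\mud{x}{r}(A,B),\mud{x}{r}(C,B)\leq 1$, and $\varepsilon_1=\varepsilon_2=1$ in the strong inequality yields the factor $2$ and dilation $2r$. When $B\cap B(x,r)\neq\emptyset$, choosing any $b_0$ in the intersection gives $\dist(a,B)\leq|a-b_0|<2r$, so $\varepsilon_1=\varepsilon_2=2$ yields factor $3$ and dilation $3r$. The one place where care is needed is keeping the two dilation factors $(1+\varepsilon_1)$ and $(1+\varepsilon_2)$ attached to the correct sides of the triangle, so that the asymmetric form of the stated bound is produced; but once the one-sided argument is written out symmetrically, this bookkeeping falls out automatically.
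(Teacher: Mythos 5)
Your verification is correct in outline, and it should be noted that the paper itself offers no proof of Lemma \ref{l:Dprop} to compare against: the lemma is quoted from \cite{localsetapproximation}, so a direct check from the definitions, as you give, is exactly what the citation stands in for. Your reduction of everything to monotonicity of the excess, the triangle inequality through an intermediate point of $B$, and quasimonotonicity is the natural argument, and your derivation of the two weak quasitriangle inequalities from the strong one with $\varepsilon_1=\varepsilon_2=1$ (when $x\in B$) and $\varepsilon_1=\varepsilon_2=2$ (when $B\cap B(x,r)\neq\emptyset$) is right; note you have tacitly corrected a typo, since the second weak inequality should read $\mD{x}{r}[A,C]$ on the left (this is the form actually used in the proof of Theorem \ref{t:main1}).

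Two details deserve repair. First, your one-line justification of the closure property, $\dist(a,B)=\dist(a,\overline{B})$, only handles closing the \emph{second} argument of the excess; to get $\mD{x}{r}[\overline{A},B]=\mD{x}{r}[A,B]$ you also need $\ex(\overline{A}\cap B(x,r),B)=\ex(A\cap B(x,r),B)$, which uses the openness of the ball: any point of $\overline{A}\cap B(x,r)$ is a limit of points of $A$ that eventually lie in $B(x,r)$, and $\dist(\cdot,B)$ is continuous, so the two suprema agree. (The same observation is what makes your containment argument go through.) Second, in the strong quasitriangle inequality the point $b\in B$ with $|a-b|\leq r\,\mud{x}{r}(A,B)$ need not exist, since the infimum defining $\dist(a,B)$ may not be attained. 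The fix is to exploit the strict inequality $|a-x|<r$: for fixed $a$ and any $\delta>0$ choose $b\in B$ with $|a-b|<\dist(a,B)+\min\{\delta,\,(1+\varepsilon_1)r-|a-x|-\dist(a,B)\}$, which places $b$ in the open ball $B(x,(1+\varepsilon_1)r)$ while keeping $|a-b|<\dist(a,B)+\delta$; letting $\delta\downarrow 0$ for fixed $a$ and then taking the supremum over $a\in A\cap B(x,r)$ gives the claimed bound, and the symmetric half is handled the same way. With these two patches your bookkeeping, including the use of quasimonotonicity to convert $\mD{x}{r}[A,B]$ into $(1+\varepsilon_2)\mD{x}{(1+\varepsilon_2)r}[A,B]$, is sound.
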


\begin{remark} The \emph{relative Hausdorff distance between $A$ and $B$ in $B(x,r)$},  defined by \begin{align*}
\D{x}{r}[A,B]=r^{-1}\max\big\{&\ex(A\cap B(x,r),B\cap B(x,r)),\\ &\qquad\qquad \ex(B\cap B(x,r),A\cap B(x,r))\big\}\end{align*} whenever $A\cap B(x,r)$ and $B\cap B(x,r)$ are both nonempty, is a common, better-known variant of the relative Walkup-Wets distance. We note that  $\mD{x}{r}[A,B]\leq \D{x}{r}[A,B]$ whenever both quantities are defined. Although the relative Hausdorff distance satisfies the triangle inequality rather than just the weak and strong quasitriangle inequalities enjoyed by the relative Walkup-Wets distance, the relative Hausdorff distance fails to be quasimonotone (see \cite[Remark 2.3]{localsetapproximation}).
This makes the relative Hausdorff distance unsuitable for use in the local set aproximation framework below. The use of the relative Walkup-Wets distance is deliberate and ensures that one can obtain structure theorems for Reifenberg type sets.\end{remark}

\subsection{Attouch-Wets topology, tangent sets, and pseudotangent sets}

Let $\CL{\RR^n}$ denote the collection of all nonempty closed sets in $\RR^n$. Let $\CL{0}$ denote the subcollection of all nonempty closed sets in $\RR^n$ containing the origin. We endow $\CL{\RR^n}$ and $\CL{0}$ with the \emph{Attouch-Wets topology} (see \cite[Chapter 3]{Beer} or \cite[Chapter 4]{RWbook}; i.e.~the topology described by the following theorem.

\begin{theorem}[{\cite[Theorem 2.5]{localsetapproximation}}] There exists a metrizable topology on $\CL{\RR^n}$ in which a sequence $(A_i)_{i=1}^\infty$ in $\CL{\RR^n}$ converges to a set $A\in\CL{\RR^n}$ if and only if $$ \lim_{i\rightarrow\infty} \ex(A_i\cap B(0,r),A)=0\quad\text{and}\quad \lim_{i\rightarrow\infty} \ex(A\cap B(0,r),A_i)=0\quad\text{for all }r>0.$$ Moreover, in this topology, $\CL{0}$ is sequentially compact; i.e.~ for any sequence $(A_i)_{i=1}^\infty$ in $\CL{0}$ there exists a subsequence $(A_{ij})_{j=1}^\infty$ and $A\in\CL{0}$ such that $(A_{ij})_{j=1}^\infty$ converges to $A$ in the sense above.
\end{theorem}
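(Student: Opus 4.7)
The plan is to construct an explicit metric realizing the claimed topology and then deduce sequential compactness via a diagonal extraction. Following the standard Attouch--Wets recipe, I would define for each $k\in\mathbb{N}$
$$e_k(A,B) \colonequals \max\bigl\{\ex(A\cap B(0,k),B),\ \ex(B\cap B(0,k),A)\bigr\},$$
and set $d(A,B) \colonequals \sum_{k=1}^\infty 2^{-k}\min\{1,e_k(A,B)\}$. Verifying that $d$ is a metric on $\CL{\RR^n}$ is routine: symmetry is built in, the triangle inequality follows term-by-term from the triangle inequality and monotonicity of $\ex$ recorded in Lemma \ref{l:Dprop}, and $d(A,B)=0$ forces $e_k(A,B)=0$ for every $k$, which with $A$, $B$ closed gives $A=B$.

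The next step is to identify convergence in $d$ with the two-sided excess convergence in the statement. If $d(A_i,A)\to 0$, then for each fixed $k$ the term $\min\{1,e_k(A_i,A)\}$ tends to $0$, which forces both excesses in $B(0,k)$ to vanish. Conversely, suppose $\ex(A_i\cap B(0,r),A)\to 0$ and $\ex(A\cap B(0,r),A_i)\to 0$ for every $r>0$. Then for each $k$, $e_k(A_i,A)\to 0$, and $d(A_i,A)\to 0$ by dominated convergence against the summable majorant $(2^{-k})$. This step is essentially bookkeeping and not the main obstacle.

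For sequential compactness of $\CL{0}$, I would run a diagonal argument. Fix a countable dense set $\{x_j\}_{j=1}^\infty\subseteq\RR^n$. Given $(A_i)\subseteq\CL{0}$, each sequence $(\dist(x_j,A_i))_{i=1}^\infty$ is bounded above by $|x_j|$ since $0\in A_i$, so by a standard diagonal extraction I may pass to a subsequence (still indexed by $i$) along which $\dist(x_j,A_i)\to \delta_j\in[0,|x_j|]$ for every $j$. Define
$$A \colonequals \bigl\{x\in\RR^n: \liminf_{i\to\infty}\dist(x,A_i)=0\bigr\}.$$
Since the functions $\dist(\cdot,A_i)$ are $1$-Lipschitz, so is $\liminf_i\dist(\cdot,A_i)$, which makes $A$ closed; and $0\in A$ since $0\in A_i$ for every $i$.

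Finally I would verify that this subsequence converges to $A$ in the sense of the statement. For $\ex(A\cap B(0,r),A_i)\to 0$: given $x\in A\cap B(0,r)$, by definition and the $1$-Lipschitz bound $|\dist(x,A_i)-\dist(x_j,A_i)|\leq|x-x_j|$ one can pick $x_j$ very close to $x$ with $\delta_j$ small, forcing $\limsup_i\dist(x,A_i)$ arbitrarily small; compactness of $\overline{A}\cap\overline{B(0,r)}$ upgrades this pointwise conclusion to uniform convergence. For the reverse direction $\ex(A_i\cap B(0,r),A)\to 0$: if it failed, one could extract $a_i\in A_i\cap B(0,r)$ with $\dist(a_i,A)\geq\varepsilon$ along a subsequence and $a_i\to a^*\in\overline{B(0,r)}$ by Bolzano--Weierstrass; choosing $x_j\to a^*$ from the dense set forces $\delta_j\to 0$ (using the triangle inequality with $\dist(x_j,A_i)\leq|x_j-a_i|$), so $a^*\in A$, contradicting $\dist(a_i,A)\geq\varepsilon$. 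The main obstacle is organizing the diagonal extraction so that the single limit set $A$ controls \emph{both} excess quantities at once rather than only one direction; the hypothesis $0\in A_i$ is precisely what prevents mass from ``escaping to infinity,'' and is why sequential compactness holds on $\CL{0}$ rather than on all of $\CL{\RR^n}$.
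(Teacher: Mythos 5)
Your sequential compactness argument for $\CL{0}$ is essentially the standard one (diagonal extraction so that the $1$-Lipschitz functions $\dist(\cdot,A_i)$ converge locally uniformly, then take $A$ to be the zero set of the limit function), and as written it is correct: the hypothesis $0\in A_i$ gives the bound $\dist(x_j,A_i)\leq|x_j|$, the limit set is closed, contains $0$, and your two excess estimates go through. The problem is in the metrization step, which you dismiss as routine: the function $d(A,B)=\sum_k 2^{-k}\min\{1,e_k(A,B)\}$ you propose is \emph{not} a metric, because the truncated quantity $e_k(A,B)=k\,\mD{0}{k}[A,B]$ does not satisfy the triangle inequality, and Lemma \ref{l:Dprop} -- which you invoke for exactly this point -- records only the weak and strong \emph{quasi}triangle inequalities (with dilated radii and multiplicative constants); the remark following it explains that this loss is unavoidable if one wants quasimonotonicity. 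Concretely, in $\RR^n$ take $k\geq 2$, $0<\epsy<2^{-k-3}$, and $A=\{0,(k-\epsy)e_1\}$, $B=\{0,(k+\epsy)e_1\}$, $C=\{0\}$. Then $e_k(A,B)=2\epsy$ and $e_k(B,C)=0$ (since $B\cap B(0,k)=\{0\}$), while $e_k(A,C)=k-\epsy\geq 1$; summing over $k$ one checks $d(A,C)=2^{1-k}>d(A,B)+d(B,C)$. So your candidate $d$ does not define a metric, and without the triangle inequality the "balls" of $d$ need not even generate a topology in the usual way, so metrizability is not established.

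The standard repair, which is what the cited construction in \cite{localsetapproximation} (following \cite{Beer}, \cite{RWbook}) amounts to, is to metrize the Attouch--Wets topology by locally uniform convergence of distance functions, e.g.\ $d(A,B)=\sum_{k}2^{-k}\min\bigl\{1,\sup_{|x|\leq k}|\dist(x,A)-\dist(x,B)|\bigr\}$, which is a genuine metric on $\CL{\RR^n}$ since the map $A\mapsto\dist(\cdot,A)$ is injective on closed sets and the sup-norm terms obey the triangle inequality. One must then prove that $d$-convergence is equivalent to the stated two-sided condition $\ex(A_i\cap B(0,r),A)\to0$ and $\ex(A\cap B(0,r),A_i)\to0$ for all $r>0$; this is a genuine (if standard) argument involving a change of radius -- closeness of the distance functions on $B(0,k)$ controls both truncated excesses at slightly smaller radii, and conversely smallness of both excesses at a larger radius together with $1$-Lipschitzness of distance functions controls the sup-norm on $B(0,k)$ -- not mere bookkeeping. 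With that substitution your compactness half can be kept verbatim, since it only uses the characterization of convergence, not the specific metric.
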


We write $A_i\rightarrow A$ or $A=\lim_{i\rightarrow\infty} A$ (in $\CL{\RR^n}$) to denote that a sequence of $(A_i)_{i=1}^\infty$ in $\CL{\RR^n}$ converges to a set $A\in\CL{\RR^n}$ in the Attouch-Wets topology. If each set $A_i\in\CL{0}$, then we may write $A_i\rightarrow A$ in $\CL{0}$ to emphasize that the limit $A\in\CL{0}$, as well.

\begin{lemma}[{\cite[Lemma 2.6]{localsetapproximation}}] Let $A,A_1,A_2,\dots\in\CL{\RR^n}$. The following statements are equivalent: \begin{enumerate}
\item $A_i\rightarrow A$ in $\CL{\RR^n}$;
\item $\lim_{i\rightarrow\infty} \mD{x}{r}[A_i,A]=0$ for all $x\in\RR^n$ and for all $r>0$;
\item $\lim_{i\rightarrow\infty} \mD{x_0}{r_j}[A_i,A]=0$ for some $x_0\in\RR^n$ and for some sequence $r_j\rightarrow\infty$.
\end{enumerate}
\end{lemma}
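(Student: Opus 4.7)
The plan is to prove the chain of implications (i) $\Rightarrow$ (ii) $\Rightarrow$ (iii) $\Rightarrow$ (i), exploiting only the definitions of Attouch-Wets convergence and of the relative Walkup-Wets distance, together with the monotonicity property of excess (Lemma \ref{l:Dprop}). The implication (ii) $\Rightarrow$ (iii) is trivial (take $x_0=0$ and $r_j = j$), so all the real content lies in bookkeeping between balls centered at the origin and balls centered elsewhere.

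First I would handle (i) $\Rightarrow$ (ii). Fix $x \in \RR^n$ and $r > 0$. The ball inclusion $B(x,r) \subseteq B(0, |x|+r)$ together with monotonicity of the excess gives
$$\ex\bigl(A_i \cap B(x,r),\, A\bigr) \leq \ex\bigl(A_i \cap B(0,|x|+r),\, A\bigr),$$
and the right-hand side tends to $0$ by the definition of $A_i \to A$ in the Attouch-Wets topology applied with $R = |x|+r$. Dividing by $r$ shows $\mud{x}{r}(A_i,A) \to 0$. The symmetric inequality $\ex(A \cap B(x,r), A_i) \leq \ex(A \cap B(0,|x|+r), A_i) \to 0$ yields $\mud{x}{r}(A,A_i) \to 0$, and taking the maximum establishes (ii).

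For (iii) $\Rightarrow$ (i), fix $R > 0$ arbitrarily. Since $r_j \to \infty$, we can choose $j = j(R)$ large enough that $R + |x_0| < r_j$, which guarantees $B(0,R) \subseteq B(x_0,r_j)$. Then, exactly as above, monotonicity gives
$$\ex\bigl(A_i \cap B(0,R),\, A\bigr) \leq \ex\bigl(A_i \cap B(x_0, r_j),\, A\bigr) = r_j\, \mud{x_0}{r_j}(A_i, A) \leq r_j\, \mD{x_0}{r_j}[A_i,A],$$
and the right-hand side tends to $0$ as $i \to \infty$ with $j$ fixed by hypothesis (iii). The same estimate with $A$ and $A_i$ swapped controls $\ex(A \cap B(0,R), A_i)$. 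Since $R > 0$ was arbitrary, this is precisely the statement that $A_i \to A$ in the Attouch-Wets topology.

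There is no serious obstacle: each implication reduces to the monotonicity inclusion $B(x,r) \subseteq B(y,s)$ when $|x - y| + r \leq s$ combined with monotonicity of excess. The only point requiring mild care is the quantifier order in (iii), where one must pick a single scale $r_j$ (depending on $R$) \emph{first} and then let $i \to \infty$, rather than attempting any diagonal passage to the limit. Once this is observed, the proof is a three-line formal verification.
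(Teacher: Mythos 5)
Your proof is correct. Note that the paper does not prove this lemma at all---it is quoted verbatim from \cite{localsetapproximation} (Lemma 2.6)---so there is no in-paper argument to compare against; your chain (i) $\Rightarrow$ (ii) $\Rightarrow$ (iii) $\Rightarrow$ (i), using the ball containments $B(x,r)\subseteq B(0,|x|+r)$ and $B(0,R)\subseteq B(x_0,r_j)$ together with monotonicity of the excess, is exactly the standard verification, and you correctly handle the one delicate point, namely fixing the scale $r_j$ (depending on $R$) before letting $i\to\infty$ in (iii) $\Rightarrow$ (i).
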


The notions of tangent sets and pseudotangent sets of a closed set in the following definition are modeled on notions of tangent measures (introduced by Preiss \cite{Preiss}) and pseudotangent measures (introduced by Kenig and Toro \cite{kenigtoroannals}) of a Radon measure.

\begin{definition}[{\cite[Definition 3.1]{localsetapproximation}}] \label{d:tangent} Let $T\in\CL{0}$, let $A\in\CL{\RR^n}$, and let $x\in A$. We say that $T$ is a \emph{pseudotangent set of $A$ at $x$} if there exist sequences $x_i\in A$ and $r_i>0$ such that $x_i\rightarrow x$, $r_i\rightarrow 0$, and $$\frac{A-x_i}{r_i}\rightarrow T\quad\text{in }\CL{0}.$$ If $x_i=x$ for all $i$, then we call $T$ a \emph{tangent set of $A$ at $x$}. Let $\PsTan(A,x)$ and $\Tan(A,x)$ denote the collections of all pseudotangent sets of $A$ at $x$ and all tangent sets of $A$ at $x$, respectively.
\end{definition}

\begin{lemma}[{\cite[Remark 3.3, Lemma 3.4, Lemma 3.5]{localsetapproximation}}] $\Tan(A,x)$ \label{l:tan-properties} and $\PsTan(A,x)$ are closed in $\CL{0}$ and are nonempty for all $A\in\CL{\RR^n}$ and $x\in A$. Moreover, \begin{itemize}
\item If $T\in\Tan(A,x)$ and $\lambda>0$, then $\lambda A\in\Tan(A,x)$.
\item If $T\in\PsTan(A,x)$ and $\lambda>0$, then $\lambda T\in\PsTan(A,x)$.
\item If $T\in\PsTan(A,x)$ and $y\in T$, then $T-y\in\PsTan(A,x)$.\end{itemize}\end{lemma}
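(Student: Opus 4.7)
My plan is to handle the five assertions in a natural order: nonemptiness and the two dilation invariances first (essentially routine from sequential compactness and scale invariance of $\mD{\cdot}{\cdot}$), then translation invariance of pseudotangents (which requires a clever choice of base points), and finally closedness in $\CL{0}$ (a diagonal extraction). I read the first assertion as $\lambda T \in \Tan(A,x)$ (the $\lambda A$ in the statement appears to be a typo).

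For \textbf{nonemptiness}, fix any sequence $r_i \downarrow 0$ and set $A_i = (A-x)/r_i \in \CL{0}$. Sequential compactness of $\CL{0}$ in the Attouch--Wets topology produces a subsequence converging to some $T \in \CL{0}$, and this $T$ lies in $\Tan(A,x) \subseteq \PsTan(A,x)$. For \textbf{dilation invariance}, if $T = \lim_i (A-x_i)/r_i$ realizes $T\in\PsTan(A,x)$ (with $x_i\equiv x$ for $\Tan$), set $s_i = r_i/\lambda \downarrow 0$; then $(A-x_i)/s_i = \lambda (A-x_i)/r_i$, and by the scale invariance clause of Lemma \ref{l:Dprop},
\[\mD{0}{r}\!\left[\lambda (A-x_i)/r_i,\ \lambda T\right] = \mD{0}{r/\lambda}\!\left[(A-x_i)/r_i,\ T\right] \to 0\]
for every $r>0$, so $(A-x_i)/s_i \to \lambda T$, proving $\lambda T \in \PsTan(A,x)$ (respectively $\Tan(A,x)$).

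For the \textbf{translation claim}, suppose $T \in \PsTan(A,x)$ via $x_i \to x$, $r_i \downarrow 0$, $C_i := (A-x_i)/r_i \to T$, and let $y \in T$. Attouch--Wets convergence gives $\dist(y, C_i) \to 0$, so we may pick $a_i \in A$ with $v_i := (a_i - x_i)/r_i \to y$. Set $x'_i := a_i = x_i + r_i v_i$; since $x_i \to x$, $r_i \downarrow 0$, and $(v_i)$ is bounded, $x'_i \to x$. Moreover $(A-x'_i)/r_i = C_i - v_i$, and since translation is jointly continuous in the Attouch--Wets topology (combine translation invariance of $\mD{\cdot}{\cdot}$ with the weak quasitriangle inequality to pass from $C_i \to T$ and $v_i \to y$ to $C_i - v_i \to T - y$), we conclude $T - y \in \PsTan(A,x)$.

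The \textbf{main obstacle} is \textbf{closedness}, which requires a diagonal extraction. Let $T_k \in \PsTan(A,x)$ with $T_k \to T$ in $\CL{0}$, and let each $T_k$ be realized by sequences $x_{k,i} \to x$ (as $i\to\infty$), $r_{k,i} \downarrow 0$ with $(A - x_{k,i})/r_{k,i} \to T_k$. Fix a metric $\rho$ inducing the Attouch--Wets topology on $\CL{\RR^n}$. For each $k$, choose an index $i(k)$ large enough that simultaneously $|x_{k,i(k)} - x| < 1/k$, $r_{k,i(k)} < 1/k$, and $\rho\!\left((A-x_{k,i(k)})/r_{k,i(k)},\, T_k\right) < 1/k$. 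Setting $y_k := x_{k,i(k)}$ and $s_k := r_{k,i(k)}$, we obtain $y_k \to x$, $s_k \downarrow 0$, and, by the triangle inequality in $\rho$ together with $\rho(T_k, T) \to 0$, $(A-y_k)/s_k \to T$. Hence $T \in \PsTan(A,x)$. The same diagonal argument, imposing $x_{k,i} = x$ at every stage, proves closedness of $\Tan(A,x)$. The delicate point is purely organizational: the diagonal must be chosen to control the three parameters (basepoint, scale, approximation error) simultaneously, but no estimates beyond metrizability of the Attouch--Wets topology are required.
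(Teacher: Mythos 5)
Your proof is correct. Note that the paper does not actually prove this lemma---it is imported verbatim from \cite{localsetapproximation} (Remark 3.3, Lemmas 3.4--3.5)---and your arguments are the standard ones behind that citation: sequential compactness of $\CL{0}$ for nonemptiness, scale invariance of $\mD{x}{r}$ for the two dilation statements, re-basing at nearby points $a_i\in A$ together with joint continuity of translation (translation invariance of $\mD{x}{r}$ plus the weak quasitriangle inequality, exactly as you indicate) for the $T-y$ statement, and a metric diagonal extraction for closedness, which is legitimate since the Attouch--Wets topology is metrizable. Your reading of ``$\lambda A$'' in the statement as a typo for ``$\lambda T$'' is also the intended one.
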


\subsection{Reifenberg type sets and Mattila-Vuorinen type sets}

\begin{definition}[{\cite[Definition 4.1 and Definition 4.7]{localsetapproximation}}] \label{d:big} Let $A\subseteq\RR^n$ be nonempty. \begin{enumerate}
\item A \emph{local approximation class} $\cS$ is a nonempty collection of closed sets in $\CL{0}$ such that $\cS$ is a cone; that is, for all $S\in\cS$ and $\lambda>0$, $\lambda S\in\cS$.
\item For every $x\in\RR^n$ and $r>0$, define the \emph{bilateral approximability} $\Theta_A^{\cS}(x,r)$ of $A$ by $\cS$ at location $x$ and scale $r$ by $$\Theta_A^{\cS}=\inf_{S\in\cS} \mD{x}{r}[A,x+S]\in[0,\infty).$$
\item We say that $x\in A$ is an \emph{$\cS$ point} of $A$ if $\lim_{r\downarrow 0} \Theta_A^{\cS}(x,r)=0$.
\item We say that $A$ is \emph{locally bilaterally $\varepsilon$-approximable by $\cS$} if for every compact set $K\subseteq A$ there exists $r_K$ such that $\Theta_A^{\cS}(x,r)\leq \varepsilon$ for all $x\in K$ and $0<r\leq r_K$.
\item We say that $A$ is \emph{locally bilaterally well approximated by $\cS$} if $A$ is locally bilaterally $\varepsilon$-approximable by $\cS$ for all $\varepsilon>0$.
\item For every $x\in\RR^n$ and $r>0$, define the \emph{unilateral approximability} $\beta_A^{\cS}(x,r)$ of $A$ by $\cS$ at location $x$ and scale $r$ by $$\beta_A^{\cS}(x,r)=\inf_{S\in\cS} \mud{x}{r}(A,x+S)\in[0,1].$$
\item We say that $A$ is \emph{locally unilaterally  $\varepsilon$-approximable by $\cS$} if for every compact set $K\subseteq A$ there exists $r_K$ such that $\beta_A^{\cS}(x,r)\leq \varepsilon$ for all $x\in K$ and $0<r\leq r_K$.
\item We say that $A$ is \emph{locally unilaterally well approximated by $\cS$} if $A$ is locally unilaterally $\varepsilon$-approximable by $\cS$ for all $\varepsilon>0$.
\end{enumerate}
\end{definition}

\begin{remark}Sets that are bilaterally approximated by $\cS$ are called \emph{Reifenberg type sets} and sets that are unilaterally approximated by $\cS$ are called \emph{Mattila-Vuorinen type sets} with deference to pioneering work of Reifenberg \cite{Reifenberg} and Mattila and Vuorinen \cite{MV}, which investigated, respectively,  regularity of sets that admit locally uniform \emph{bilateral} and \emph{unilateral} approximations by $\cS=G(n,m)$, the Grassmannian of $m$-dimensional subspaces of  $\RR^n$. The concept of (unilateral) approximation numbers first appeared in the work of Jones \cite{JonesTSP} in connection with the Analyst's traveling salesman theorem. For additional background, including examples of Reifenberg type sets that have appeared in the literature, see the introduction of \cite{localsetapproximation}. \end{remark}

\begin{remark} For any nonempty closed set $A\subseteq\RR^n$ and point $x\in A$, the set $\Tan(A,x)$ of tangent sets of $A$ at $x$ and the set $\PsTan(A,x)$ of pseudotangent sets of $A$ at $x$ are local approximation classes by Lemma \ref{l:tan-properties}. We also note that from the definitions, it is immediate that any set $A\subseteq\RR^n$ which is locally bilaterally well approximated by some local approximation class $\cS$ is also locally unilaterally well approximated by $\cS$.  \end{remark}

The following essential properties of bilateral approximation numbers appear across a number of lemmas in \cite[\S4]{localsetapproximation}, which we consolidate into a single theorem statement. See \cite[Lemma 7.2]{localsetapproximation} for the analogous properties of unilateral approximation numbers.

\begin{lemma}[{\cite[\S4, Remark 2.4]{localsetapproximation}}] \label{l:tprop} Let $\cS$ be a local approximation class, let $A\subseteq\RR^n$ be nonempty, let $x,y\in\RR^n$, and let $r,s>0$. \begin{itemize}
\item size: $0 \leq \Theta_A^{\cS}(x,r) - \dist(x,A)/r \leq 1$; thus, $0\leq \Theta^{\cS}_A(x,r)\leq 1$ for all $x\in A$.
\item scale invariance: $\Theta_A^{\cS}(x,r)=\Theta_{\lambda A}^{\cS}(\lambda x,\lambda r)$ for all $\lambda>0$.
\item translation invariance: $\Theta_A^{\cS}(x,r)=\Theta_{A+z}^{\cS}(x+z,r)$ for all $z\in\RR^n$.
\item closure: $\Theta_A^{\cS}(x,r)=\Theta_{\overline{A}}^{\cS}(x,r)$
\item quasimonotonicity: If $B(x,r)\subseteq B(y,s)$ and $|x-y|\leq ts$, then $$\Theta_A^{\cS}(x,r) \leq \frac{s}{r}\left[t+(1+t)\Theta_A^{\cS}(y,(1+t)s)\right].$$ In particular, if $r<s$, then $\Theta_A^{\cS}(x,r) \leq (s/r) \Theta_A^{\cS}(x,s)$.
\item limits: If $A,A_1,A_2,\dots\in \CL{\RR^n}$ and $A_i\rightarrow A$ in $\CL{\RR^n}$, then \begin{equation*}\begin{split}\label{betaLimits.1}\frac{1}{1+\varepsilon} \limsup_{i\to\infty}\,&\Theta^\cS_{A_i}\left(x,\frac{r}{1+\varepsilon}\right)\\ \leq &\,\Theta^\cS_{A}(x,r) \leq
(1+\varepsilon) \liminf_{i\to\infty}\Theta^\cS_{A_i}(x,r(1+\varepsilon))\quad\text{for all }\varepsilon>0.\end{split}.\end{equation*}
\end{itemize}
 \end{lemma}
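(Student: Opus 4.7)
The plan is to deduce each of the six clauses directly from the definition $\Theta_A^{\cS}(x,r) = \inf_{S \in \cS} \mD{x}{r}[A, x+S]$ by combining the corresponding properties of the relative Walkup-Wets distance already collected in Lemma \ref{l:Dprop} with the sole structural hypothesis on $\cS$, namely that it is a cone (so $0\in S$ for every $S \in \cS$ and $\lambda S\in\cS$ for every $\lambda>0$). The closure, scale invariance, and translation invariance clauses transfer term-by-term: closure is direct from the closure clause for $\mD$; scale invariance uses $\lambda S\in\cS$, so the change of variables $S\mapsto \lambda S$ merely reparametrizes the infimum; and translation invariance follows from $(x+z)+S = z+(x+S)$.

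For the size bound, the inequality $\Theta_A^{\cS}(x,r) \geq \dist(x,A)/r$ holds because every $S \in \cS$ contains the origin, hence $x \in x+S$, and evaluating $\ex((x+S)\cap B(x,r), A)$ at the single point $x$ already yields $\dist(x,A)$. Thus $\mud{x}{r}(x+S,A) \geq \dist(x,A)/r$ uniformly in $S$. The upper estimate exploits the same observation twice: for $a \in A \cap B(x,r)$ one has $\dist(a, x+S) \leq |a-x| \leq r$, so $\mud{x}{r}(A, x+S) \leq 1$; and for $z \in (x+S)\cap B(x,r)$, the triangle inequality gives $\dist(z,A) \leq |z-x| + \dist(x,A) \leq r + \dist(x,A)$, so $\mud{x}{r}(x+S, A) \leq 1 + \dist(x,A)/r$. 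Taking maxes and infimizing over $S$ yields $\Theta_A^{\cS}(x,r) \leq 1 + \dist(x,A)/r$.

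The main technical step is quasimonotonicity. For each $S \in \cS$ I would insert the auxiliary set $y+S$ (which in general is \emph{not} in $\cS$) and establish the one-sided estimate
\[
\ex(A \cap B(x,r),\, x+S) \;\leq\; \ex(A \cap B(x,r),\, y+S) + |y-x|,
\]
together with its mirror image in which $A$ and $x+S$ are interchanged: the key point is that the translation by $\pm(y-x)$ carries $y+S$ bijectively onto $x+S$, and any $z \in (x+S)\cap B(x,r)$ satisfies $z+(y-x)\in (y+S)\cap B(y,(1+t)s)$ because $|z-y|\leq|z-x|+|x-y|\leq r+ts \leq (1+t)s$. Since $B(x,r)\subseteq B(y,s)\subseteq B(y,(1+t)s)$, the excesses restricted to $B(x,r)$ are in turn dominated by $(1+t)s\,\mud{y}{(1+t)s}$. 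Dividing by $r$ and infimizing over $S\in\cS$ produces
$\Theta_A^\cS(x,r) \leq (s/r)[\,t + (1+t)\Theta_A^\cS(y,(1+t)s)\,],$
with the special case $y=x$, $t=0$ recovering the pure rescaling version.

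Finally, the limit clause follows from the limit clause for $\mD$ in Lemma \ref{l:Dprop}: given $\varepsilon>0$, one chooses near-optimizers $S_i \in \cS$ for $\Theta_{A_i}^\cS(x,r(1+\varepsilon))$, applies the $\mD$-limit inequalities to $\mD{x}{r(1+\varepsilon)}[A_i, x+S_i]$, and takes infima, with the $(1+\varepsilon)$ factors arising from the scale-perturbation estimates for $\mD$. I expect the quasimonotonicity step to be the only one requiring genuine care, chiefly in bookkeeping which ball each excess is computed on; the remaining clauses are near-direct transcriptions of Lemma \ref{l:Dprop}.
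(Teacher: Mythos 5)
This lemma is not proved in the paper at all: it is imported verbatim from \cite{localsetapproximation} (the appendix is an unproved summary of that framework), so there is no in-paper argument to compare against. Judged on its own, your direct verification from the definition $\Theta_A^{\cS}(x,r)=\inf_{S\in\cS}\mD{x}{r}[A,x+S]$ is essentially correct and is the natural route. The size, closure, scale- and translation-invariance clauses do transfer termwise from Lemma \ref{l:Dprop}; note only that $0\in S$ for every $S\in\cS$ comes from $\cS\subseteq\CL{0}$ in Definition \ref{d:big}, not from the cone property. Your quasimonotonicity argument via the auxiliary translate $y+S$ is the right one (and uses $r\leq s$, which does follow from $B(x,r)\subseteq B(y,s)$); the membership you need is even more immediate than the inequality you wrote, since for $z\in(x+S)\cap B(x,r)$ one has $|z+(y-x)-y|=|z-x|\leq r$, whereas your bound $|z-y|\leq r+ts$ concerns the untranslated point.

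The one place that needs repair is the limits clause: Lemma \ref{l:Dprop} as stated contains no ``limit clause'' for $\mD$, so you cannot cite one. The correct ingredients, both available in the appendix, are the characterization of Attouch--Wets convergence (namely $A_i\rightarrow A$ if and only if $\mD{x}{\rho}[A_i,A]\rightarrow 0$ for every center and scale, \cite[Lemma 2.6]{localsetapproximation}) together with the strong quasitriangle inequality and quasimonotonicity from Lemma \ref{l:Dprop}: inserting $A_i$ between $A$ and $x+S_i$, the term involving $\mD{x}{\rho}[A,A_i]$ vanishes as $i\rightarrow\infty$, and the radius inflation from $r$ to $r(1+\varepsilon)$ produces the stated factor. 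Note also that the limits statement consists of two inequalities and your sketch addresses only one; the other follows by the symmetric argument in which a near-optimal $S\in\cS$ for $\Theta^{\cS}_{A}(x,r)$ is used as a competitor for $\Theta^{\cS}_{A_i}(x,r/(1+\varepsilon))$. With these adjustments the proof is complete.
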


The notions of $\cS$ points and locally bilaterally and unilaterally well approximated sets admit the following characterizations in terms of tangent sets and pseudotangent sets. Here $\ccS$ denotes the closure of $\cS$ in $\CL{0}$ with respect to the Attouch-Wets topology.

\begin{theorem}[{\cite[Corollary 4.12, Corollary 4.15, Lemma 7.7, Theorem 7.10]{localsetapproximation}}] \label{t:tangent-well} Let $\cS$ be a local approximation class and let $A\subseteq\RR^n$ be a nonempty set and let $x_0\in A$. Then \begin{enumerate}
\item $x_0$ is an $\cS$ point of $A$ if and only if $\Tan(\overline{A},x_0)\subseteq \ccS $;
\item $A$ is locally bilaterally well approximated by $\cS$ if and only if $$\PsTan(\overline{A},x)\subseteq \ccS\quad\text{for all }x\in A;$$
\item $A$ is locally unilaterally well approximated by $\cS$ if and only if $$\PsTan(\overline{A},x)\subseteq\{T\in\CL{0}:T\subseteq S\text{ for some } S\in\ccS\}\quad\text{for all }x\in A.$$
\end{enumerate}
\end{theorem}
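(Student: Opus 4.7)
My plan is to prove all three statements with a single two-step template: the forward direction exploits the scale invariance and the limit property of the approximation numbers from Lemma \ref{l:tprop}, while the reverse direction is a compactness-plus-contradiction argument using the sequential compactness of $\CL{0}$ in the Attouch-Wets topology and the fact that $\ccS$ is, by definition, closed in $\CL{0}$.

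For (i), first suppose $x_0$ is an $\cS$ point, so $\Theta_A^\cS(x_0,r)\downarrow 0$ as $r\downarrow 0$, and let $T=\lim_i(\overline{A}-x_0)/r_i\in\Tan(\overline{A},x_0)$. For each fixed $R>0$, scale invariance (Lemma \ref{l:tprop}) gives $\Theta_{(\overline{A}-x_0)/r_i}^\cS(0,R)=\Theta_A^\cS(x_0,Rr_i)\to 0$, and the limit property of $\Theta$ (sandwiching the value at $T$ between $\liminf$ and $\limsup$ of the values on the converging sequence) yields $\Theta_T^\cS(0,R)=0$ for every $R>0$. Choosing $S_i\in\cS$ with $\mD{0}{i}[T,S_i]<1/i^2$ and invoking quasimonotonicity of $\mD{\cdot}{\cdot}$, we get $\mD{0}{R}[T,S_i]\to 0$ for each $R>0$, i.e.\ $S_i\to T$ in the Attouch-Wets topology, hence $T\in\ccS$. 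Conversely, if $x_0$ is not an $\cS$ point, pick $r_i\downarrow 0$ with $\Theta_A^\cS(x_0,r_i)\ge\varepsilon>0$; by sequential compactness of $\CL{0}$ the blowups $(\overline{A}-x_0)/r_i$ subconverge to some $T\in\Tan(\overline{A},x_0)$, and the limit property forces $\Theta_T^\cS(0,1)\ge\varepsilon/(1+\eta)$ for every $\eta>0$, which precludes $T\in\ccS$.

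For (ii), the same template goes through verbatim with varying base points. Local bilateral approximability says that $\Theta_A^\cS(x_i,r)$ decays uniformly to $0$ as $r\downarrow 0$ whenever $\{x_i\}$ lies in a compact $K\subseteq A$; applied to $K=\{x_i\}_{i\ge 1}\cup\{x\}$ (compact because $x_i\to x\in A$), the scale-invariance-plus-limit-property argument from (i) shows every $T\in\PsTan(\overline{A},x)$ lies in $\ccS$. For the converse, failure of local bilateral approximation produces a compact $K\subseteq A$, $\varepsilon>0$, and sequences $x_i\in K$, $r_i\downarrow 0$ with $\Theta_A^\cS(x_i,r_i)\ge\varepsilon$; compactness of $K$ and of $\CL{0}$ extract $x_i\to x\in K$ and a pseudotangent $T\in\PsTan(\overline{A},x)$ satisfying $\Theta_T^\cS(0,1)>0$, whence $T\notin\ccS$.

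Statement (iii) is the unilateral analogue, obtained by substituting $\beta_A^\cS(x,r)$ for $\Theta_A^\cS(x,r)$ and the class $\{T\in\CL{0}:T\subseteq S\text{ for some }S\in\ccS\}$ for $\ccS$ itself. The forward direction is identical: if $A$ is locally unilaterally well approximated by $\cS$ and $T\in\PsTan(\overline{A},x)$, then $\beta_T^\cS(0,R)=0$ for all $R>0$, so there exist $S_i\in\cS$ with $\mud{0}{i}(T,S_i)<1/i^2$. The principal obstacle, and the only point where (iii) genuinely differs from (ii), is that one-sided excess does not by itself yield Attouch-Wets convergence of the $S_i$; to produce an $S_\infty\in\ccS$ with $T\subseteq S_\infty$ one must first translate the $S_i$ so they contain the origin (using that each $S_i$ is a cone, so $\lambda S_i\in\cS$, and that $\cS\subseteq\CL{0}$ means $0\in S_i$ already), then extract an Attouch-Wets subsequential limit $S_\infty\in\ccS$, and finally verify by letting $i\to\infty$ in the excess bound $\mud{0}{R}(T,S_i)\to 0$ that $T\cap B(0,R)\subseteq S_\infty$ for every $R>0$. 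The converse direction for (iii) then mirrors (ii) with $\Theta$ replaced by $\beta$.
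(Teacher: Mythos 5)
This theorem is imported verbatim from \cite{localsetapproximation} (Corollary~4.12, Corollary~4.15, Lemma~7.7, Theorem~7.10) and is not reproved in the present paper, so there is no in-paper argument to compare against. Your proposal follows the natural route that the cited reference itself uses: the forward directions combine scale/translation invariance of $\Theta$ (resp.\ $\beta$) with the limit property from Lemma~\ref{l:tprop} to push $\Theta_T^{\cS}(0,R)=0$ (resp.\ $\beta_T^{\cS}(0,R)=0$) down to the (pseudo)tangent $T$, then extract an Attouch--Wets limit from $\cS$ using sequential compactness of $\CL{0}$; the reverse directions are compactness-plus-contradiction. That is the standard argument and, modulo the points below, it is correct.

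Two small repairs are needed. First, in parts (ii) and (iii) the base points $x_i$ in the definition of $\PsTan(\overline{A},x)$ lie in $\overline{A}$, not in $A$, while the hypothesis ``locally bilaterally (unilaterally) well approximated'' only gives uniform smallness of $\Theta_A^{\cS}$ (resp.\ $\beta_A^{\cS}$) on \emph{compact subsets of $A$}. So the set $K=\{x_i\}_{i\ge1}\cup\{x\}$ you form need not be a subset of $A$ and the hypothesis does not apply directly. The fix is the standard replacement trick the paper itself uses in the proof of Theorem~\ref{t:main1}: replace each $x_i\in\overline{A}$ by a point $x_i'\in A$ with $|x_i-x_i'|\le r_i/i$; this changes neither the limit $x$, nor the pseudotangent set, nor the discussion that follows. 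Second, the parenthetical remark in (iii) about ``translating the $S_i$ so they contain the origin'' and invoking the cone property is a red herring that contradicts itself mid-sentence: since $\cS\subseteq\CL{0}$ by Definition~\ref{d:big}(i), every $S_i$ already contains the origin and no translation or rescaling is needed for the compactness extraction. The remaining ingredient of (iii) that you leave implicit --- that $T\subseteq S$ for some $S\in\ccS$ forces $\beta_T^{\cS}(0,r)=0$ for all $r$ --- holds because if $S_j\in\cS$ with $S_j\to S$, then $\ex(T\cap B(0,r),S_j)\le\ex(S\cap B(0,r),S_j)\to 0$, so $\beta_T^{\cS}(0,r)\le\mud{0}{r}(T,S_j)\to 0$; it would be worth stating this explicitly since it is the one place where you substitute $\ccS$ for $\cS$.
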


\subsection{Detectability and structure theorems for Reifenberg type sets}

\begin{definition}[{\cite[Definition 5.8]{localsetapproximation}}] \label{d:Tpdp} Let $\cT$ and $\cS$ be local approximation classes. We say that \emph{$\cT$ points are detectable in $\cS$} if there exist a constant $\phi>0$ and a function $\Phi:(0,1)\rightarrow(0,\infty)$ with $\liminf_{s\rightarrow 0+}\Phi(s)=0$ such that if $S\in\cS$ and $\Theta^\cT_S(0,r)<\phi$, then $\Theta^\cT_S(0,sr)<\Phi(s)$ for all $s\in(0,1)$. To emphasize a choice of $\phi$ and $\Phi$, we may say that \emph{$\cT$ points are $(\phi,\Phi)$ detectable in $\cS$}. \end{definition}

\begin{definition}[{\cite[Definition 5.1]{localsetapproximation}}] \label{def-perp}Let $\cT$ be a local approximation class. The \emph{bilateral singular class of $\cT$} is the local approximation class $\cT^\perp$ given by $$\cT^\perp=\{Z\in\CL{0}:\liminf_{r\downarrow 0}\Theta_Z^{\cT}(0,r)>0\}=\{Z\in\CL{0}:\Tan(Z,0)\cap \overline{\cT}=\emptyset\}.$$
\end{definition}

The following structure theorem decomposes a set $A\subseteq\RR^n$ that is locally bilaterally well approximated by $\cS$ into an open ``regular part" $A_\ccT$ and closed ``singular part" $A_\cTp$, on the condition that ``regular" $\cT$ points are detectable in $\cS$.

\begin{theorem}[{\cite[Theorem 6.2, Corollary 6.6, Corollary 5.12]{localsetapproximation}}]\label{t:open} Let $\cT$ and $\cS$ be local approximation classes. Suppose $\cT$ points are $(\phi,\Phi)$ detectable in $\cS$.
If $A\subseteq\RR^n$ is locally bilaterally well approximated by $\cS$, then $A$ can be written as a disjoint union \begin{equation*} A=A_\ccT\cup A_\cTp \quad (A_\ccT\cap A_\cTp=\emptyset),\end{equation*} where \begin{enumerate}
\item $\PsTan(\cl{A},x)\subseteq \ccS\cap\ccT$ for all $x\in A_\ccT$, and
\item $\Tan(\cl{A},x)\subseteq\ccS\cap \cTp=\{S\in\ccS:\Theta_S^{\cT}(0,r)\geq \phi\text{ for all }r>0\}$ for all $x\in A_\cTp$.\end{enumerate}
Moreover: \begin{enumerate}
\item[(iii)] $A_\ccT$ is relatively open in $A$ and $A_\ccT$ is locally bilaterally well approximated by $\cT$.
\item[(iv)] $A$ is locally bilaterally well approximated \textbf{along} $A_{\cTp}$ by $\ccS\cap \cTp$ in the sense that $\limsup_{r\downarrow 0}\sup_{x\in K}\Theta_A^{\ccS\cap \cTp}(x,r)=0$ for all compact sets $K\subseteq A_\cTp$.
\end{enumerate}
\end{theorem}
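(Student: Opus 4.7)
The plan is to carve out $A_\ccT$ as the set of ``regular'' points where bilateral approximation by $\cT$ decays to zero, and then upgrade pointwise information to uniform local bounds by combining the detectability hypothesis with the fact that $A$ is uniformly close to some $S\in\cS$ at every small scale. Concretely, I would set
\begin{equation*}
A_\ccT := \{x\in A : \lim_{r\downarrow 0}\Theta^{\cT}_{\overline A}(x,r) = 0\}, \qquad A_\cTp := A\setminus A_\ccT,
\end{equation*}
so that (ii) is essentially immediate from Definition \ref{def-perp}, the closure/translation properties of Lemma \ref{l:tprop}, and the tangent characterization in Theorem \ref{t:tangent-well}(i) (applied with $\cS\cap\cTp$ in place of $\cS$, once $\cS$-approximability is combined with the lower bound $\liminf_r\Theta^{\cT}_A(x,r)>0$).

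The technical core is a \emph{one-scale improvement lemma}: there exist $\eta=\eta(\phi)>0$ and a function $\Psi$ with $\Psi(s)\to 0$ as $s\downarrow 0$ such that, whenever $\Theta^{\cT}_A(x,r)<\eta$ and $\Theta^{\cS}_A(x,r)<\eta$ hold simultaneously at some scale $r$, one has $\Theta^{\cT}_A(x,sr)<\Psi(s)$ for all $s\in(0,1)$. To prove this, I would pick an $\cS$-approximant $S$ and a $\cT$-approximant $T$ to $A$ at $(x,r)$ with both $\mD{x}{r}[A,x+S]$ and $\mD{x}{r}[A,x+T]$ less than $\eta$; the strong quasitriangle inequality of Lemma \ref{l:Dprop} then gives $\mD{0}{r}[S,T]\lesssim\eta<\phi$, hence $\Theta^{\cT}_S(0,r)<\phi$. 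Detectability then provides $\Theta^{\cT}_S(0,sr)<\Phi(s)$ for every $s\in(0,1)$, and I would transport this back to $A$ by the triangle inequality plus the quasimonotonicity bound $\mD{x}{sr}[A,x+S]\leq s^{-1}\mD{x}{r}[A,x+S]$, yielding $\Theta^{\cT}_A(x,sr)\lesssim \Phi(s)+\eta/s$; choosing $\eta$ sufficiently small in terms of the target scale $s$ (and using that $A$ is LBWA by $\cS$ at \emph{every} scale, not just one) closes the estimate.

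With the improvement lemma in hand, properties (iii) and (iv) follow by direct manipulation. For openness in (iii): if $x\in A_\ccT$, choose $r_0$ so that $\Theta^{\cT}_A(x,r_0)<\eta/4$, and (shrinking $r_0$) use LBWA-by-$\cS$ to arrange $\Theta^{\cS}_A(x,r_0)<\eta/4$; the quasimonotonicity clause of Lemma \ref{l:tprop} transfers both inequalities to any $y\in A$ with $|y-x|$ much less than $r_0$, at the slightly smaller scale $r_0/2$, whence the improvement lemma puts $y\in A_\ccT$ and simultaneously yields the required uniform decay of $\Theta^{\cT}_A(\cdot,\cdot)$ on a neighborhood of $x$. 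The pseudotangent statement in (i) follows by applying Theorem \ref{t:tangent-well}(ii) to the relatively open set $A_\ccT$, which is LBWA by $\cS\cap\cT$, combined with Corollary \ref{c:closed}-type closure stability. For (iv), at each $x\in A_\cTp$ we have $\liminf_r\Theta^{\cT}_A(x,r)\geq\eta$, so whenever $S\in\cS$ is an $\cS$-approximant to $A$ at $(x,r)$ with small $\mD{x}{r}[A,x+S]$, the triangle inequality forces $\Theta^{\cT}_S(0,r)\gtrsim\eta$, i.e.\ $S\in\ccS\cap\cTp$ (in the Attouch-Wets limit as the approximation improves), and uniformity over compact $K\subseteq A_\cTp$ follows from the limit-stability clause of Lemma \ref{l:tprop}.

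The main obstacle will be the careful bookkeeping inside the improvement lemma, where the factor $1/s$ coming from quasimonotonicity of $\mD{}{}$ at the smaller scale $sr$ competes with the decay $\Phi(s)\to 0$ from detectability; the trick is that one does not iterate a \emph{single} initial $\cS$-approximant across all scales, but rather reselects a fresh $\cS$-approximant at each scale $sr$ using LBWA-by-$\cS$, so that the $\eta$ in the estimate may be taken arbitrarily small on any prescribed range of $s$. Once this propagation of smallness is set up, everything else amounts to routine triangle-inequality manipulation using Lemma \ref{l:Dprop} and Lemma \ref{l:tprop}.
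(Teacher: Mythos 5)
You are working against a statement the paper itself never proves: Theorem \ref{t:open} is imported verbatim from \cite{localsetapproximation}, and the only trace of the actual argument in this paper is Remark \ref{r:detectx}, which records its key step --- the threshold statement that there exist $\alpha,\beta>0$ depending only on $(\phi,\Phi)$ such that $\Theta^{\cS}_A(x,r')<\alpha$ for all $0<r'\le r$ together with $\Theta^{\cT}_A(x,r)<\beta$ at a single scale forces $x\in A_{\ccT}$. Your overall plan --- take $A_{\ccT}$ to be the set of $\cT$ points and drive everything by a propagation-of-smallness argument obtained by feeding an $\cS$-approximant into detectability via the quasitriangle inequality of Lemma \ref{l:Dprop} --- is the same mechanism, so in spirit you are reconstructing the cited proof.

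There are, however, genuine gaps. First, your ``one-scale improvement lemma'' is false as stated: it never uses the \emph{vanishing} of the $\cS$-approximability, so (taking $\cT=\cS=G(n,n-1)$, for which detectability is trivial) it would assert that every point of every $\eta$-Reifenberg-flat set with one good scale is a flat point with a universal rate $\Psi(s)\to0$, which fails for snowflake-type sets that are $\eta$-flat at all scales but nowhere vanishing. What is true is the $\epsilon$-threshold form of Remark \ref{r:detectx}, in which the decay of $\sup_{t\le r}\Theta^{\cS}_A(x,t)$ supplied by the LBWA hypothesis is inserted scale by scale; your closing paragraph points at this, but ``reselecting a fresh $\cS$-approximant at each scale'' is not by itself the fix: to apply detectability to the fresh approximant at scale $s_0^k r$ you must re-trigger the hypothesis $\Theta^{\cT}_{S}(0,\cdot)<\phi$ by a quasitriangle comparison with the previous approximant (costing a factor $1/s_0$), and you must fill the scales between consecutive rungs of the cascade by quasimonotonicity, exploiting that the rung ratio $s_0$ is fixed by $(\phi,\Phi)$ while the target $\epsilon$ shrinks --- delicate precisely because $\Phi$ is only assumed to satisfy $\liminf_{s\to0}\Phi(s)=0$, so this is the content of the lemma rather than bookkeeping. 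Second, (ii) is not ``essentially immediate'': the lower bound $\liminf_{r\to0}\Theta^{\cT}_{\cl{A}}(x,r)>0$ for $x\in A_{\cTp}$ (as opposed to $\limsup>0$, which is all the definition of $A_{\cTp}$ gives you) is itself a consequence of the propagation lemma, so your ordering is circular, and the identity $\ccS\cap\cTp=\{S\in\ccS:\Theta^{\cT}_S(0,r)\ge\phi\text{ for all }r>0\}$ --- detectability extended from $\cS$ to its closure, i.e.\ the quoted Corollary 5.12 --- is used without proof. Third, in (iv) the uniformity over compact $K\subseteq A_{\cTp}$ does not follow from the limit clause of Lemma \ref{l:tprop} alone; one needs a compactness argument extracting a pseudotangent $T\in\PsTan(\cl{A},x)\subseteq\ccS$ from a failing sequence $(x_i,r_i)$ and showing $T\in\cTp$ via the dichotomy together with the propagation lemma (otherwise some $x_i$ would lie in $A_{\ccT}$).
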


\begin{remark} \label{r:detectx} Suppose $\cT$ points are $(\phi,\Phi)$ detectable in $\cS$ and $A$ is locally bilaterally well approximated by $\cS$. From the proof that $A_\ccT$ is open in the proof of \cite[Theorem 6.2]{localsetapproximation}, there exist constants $\alpha,\beta>0$ depending only on $\phi$ and $\Phi$ such that if $\Theta_A^{\cS}(x,r')<\alpha$ for all $0<r'\leq r$ and $\Theta_A^{\cT}(x,r)<\beta$ for some $x\in A$ and $r>0$, then $x\in A_{\ccT}$.\end{remark}

A local approximation class $\cS$ is called \emph{translation invariant} if for all $S\in\cS$ and $x\in S$, $S-x\in \cS$. It is an exercise to show that if $\cS$ is translation invariant, then its closure $\ccS$ is translation invariant, as well. If $\cT$ and $\cS$ are local approximation classes such that \begin{equation} \label{e:rs} \hbox{$\ccS$ is translation invariant, and $\cT$ points are $(\phi,\Phi)$ detectable in $\cS$,}\end{equation} then every set $X\in\ccS$ is locally (in fact, globally) bilaterally well approximated by $\cS$, whence $X=X_{\ccT}\cup X_\cTp$ and $X_\cTp$ is closed (since $X$ is closed) by Theorem \ref{t:open}.

\begin{definition}[{\cite[Definition 7.12]{localsetapproximation}}] \label{sing} Let $\cT$ and $\cS$ be local approximation classes. Assume (\ref{e:rs}). We define the local approximation class of \emph{$\cT$ singular parts of sets in $\ccS$} by
$\sing\cT\ccS = \{X_\cTp: X\in \ccS \text{ and } 0\in X_\cTp\}.$
\end{definition}

\begin{theorem}[{\cite[Theorem 7.14]{localsetapproximation}}]\label{singapprox} Let $\cT$ and $\cS$ be local approximation classes. Assume (\ref{e:rs}). If $A\subseteq\RR^n$ is locally bilaterally well approximated by $\cS$, then $A_\cTp$ is locally unilaterally well approximated by $\sing\cT\ccS$.
\end{theorem}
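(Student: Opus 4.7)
The plan is to verify the criterion from Theorem \ref{t:tangent-well}(iii): it suffices to show that for every $x\in A_\cTp$, each $T\in\PsTan(\overline{A_\cTp},x)$ is contained in some member of $\overline{\sing\cT\ccS}$. Fix such an $x$ and write $T=\lim_{i\to\infty}(\overline{A_\cTp}-x_i)/r_i$ in $\CL{0}$ with $x_i\in\overline{A_\cTp}$, $x_i\to x$, and $r_i\downarrow 0$. A standard diagonal perturbation (replacing each $x_i$ by a point of $A_\cTp$ at distance $o(r_i)$) allows us to assume $x_i\in A_\cTp$. By sequential compactness of $\CL{0}$, a subsequence of $\big((\overline{A}-x_i)/r_i\big)$ converges to some $X\in\CL{0}$; we relabel. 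Because $A$ is locally bilaterally well approximated by $\cS$, Theorem \ref{t:tangent-well}(ii) places $X\in\PsTan(\overline{A},x)\subseteq\ccS$, and $T\subseteq X$ since inclusion is preserved under Attouch-Wets limits.

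The heart of the matter is to upgrade $T\subseteq X$ to $T\subseteq X_\cTp$ and to check that $0\in X_\cTp$, so that $X_\cTp\in\sing\cT\ccS\subseteq\overline{\sing\cT\ccS}$ is the required containing set. Fix $y\in T$, and choose $z_i\in A_\cTp$ (perturbing inside $\overline{A_\cTp}$ if needed) with $y_i:=(z_i-x_i)/r_i\to y$. Setting $\tilde x_i:=z_i\in A_\cTp$, we have $\tilde x_i\to x$ and $(\overline{A}-\tilde x_i)/r_i=(\overline{A}-x_i)/r_i-y_i\to X-y$ in the Attouch-Wets topology. Combining the limits clause of Lemma \ref{l:tprop} with the scale, translation, and closure invariances of $\Theta$ yields
\begin{equation*}
\Theta_{X-y}^{\cT}(0,r)\;\geq\;\frac{1}{1+\varepsilon}\,\limsup_{i\to\infty}\Theta_A^{\cT}\!\left(\tilde x_i,\,\tfrac{r\,r_i}{1+\varepsilon}\right)\qquad\text{for every }r>0\text{ and }\varepsilon>0.
\end{equation*}
Detectability, in the form of Remark \ref{r:detectx}, supplies $\alpha,\beta>0$ such that any $w\in A$ with $\Theta_A^{\cS}(w,r')<\alpha$ for all $0<r'\leq r$ and $\Theta_A^{\cT}(w,r)<\beta$ lies in $A_\ccT$. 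Since $K:=\{x,\tilde x_1,\tilde x_2,\dots\}\subseteq A$ is compact and $A$ is locally bilaterally well approximated by $\cS$, we have $\Theta_A^{\cS}(\tilde x_i,\rho)\leq\alpha/2<\alpha$ uniformly in $i$ for all sufficiently small $\rho$. As each $\tilde x_i\in A_\cTp$, the contrapositive of detectability forces $\Theta_A^{\cT}(\tilde x_i,rr_i/(1+\varepsilon))\geq\beta$ once $i$ is large, and the display then gives $\Theta_{X-y}^{\cT}(0,r)\geq\beta$ for every $r>0$. Hence $0\in(X-y)_\cTp$, i.e.\ $y\in X_\cTp$. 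Taking $y=0$ (with $\tilde x_i=x_i$) also places $0$ in $X_\cTp$, completing the verification.

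The main obstacle is transferring the quantitative $\cT$-non-flatness from the sequence $\tilde x_i\in A_\cTp$ to the pseudotangent $X-y$: a merely qualitative statement that each $\tilde x_i$ fails to be a $\cT$ point would not survive passage to the limit, and this is exactly where the detectability hypothesis pays for itself by upgrading ``not a $\cT$ point'' to the uniform lower bound $\Theta_A^{\cT}(\tilde x_i,\cdot)\geq\beta$ on a definite range of scales. The perturbation step that relocates basepoints inside the dense subset $A_\cTp\subseteq\overline{A_\cTp}$ is routine but must be coordinated with the compactness argument used to bound $\Theta_A^{\cS}(\tilde x_i,\cdot)$ uniformly in $i$.
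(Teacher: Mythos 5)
This statement is imported verbatim from \cite{localsetapproximation} (Theorem 7.14 there); the present paper gives no proof of it, so there is nothing internal to compare your argument against. On its own terms your reconstruction is correct and uses exactly the toolkit the appendix makes available: the pseudotangent characterization of unilateral approximation in Theorem \ref{t:tangent-well}(iii), the limit/scale/translation/closure properties of $\Theta$ from Lemma \ref{l:tprop}, the quantitative membership criterion for $A_{\ccT}$ in Remark \ref{r:detectx}, and the decomposition of sets in $\ccS$ from Theorem \ref{t:open} together with Definition \ref{sing}. The key mechanism --- passing the uniform lower bound $\Theta_A^{\cT}(\tilde x_i,\cdot)\geq\beta$ (valid at all sufficiently small scales because $\tilde x_i\in A_{\cTp}$ and $\Theta_A^{\cS}$ is uniformly small on the compact set $\{x\}\cup\{\tilde x_i\}$) through the Attouch--Wets limit to get $\Theta_{X-y}^{\cT}(0,r)\geq\beta$ for all $r>0$ --- is precisely where detectability is needed, as you say, and this is in the same spirit as the argument in \cite{localsetapproximation}. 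One step you should make explicit: to go from ``$\Theta_{X}^{\cT}(y,r)\geq\beta$ for all $r>0$'' to ``$y\in X_{\cTp}$'' you need to know that no point of $X_{\ccT}$ can carry such a bound; this follows from Theorem \ref{t:open}(i) combined with Theorem \ref{t:tangent-well}(i) (points of $X_{\ccT}$ are $\cT$ points of $X$, so $\Theta_X^{\cT}(y,r)\to0$ there), or equivalently from the characterization of $\cT^{\perp}$ in Definition \ref{def-perp}; since the decomposition $X=X_{\ccT}\cup X_{\cTp}$ is disjoint, the contrapositive gives $y\in X_{\cTp}$. With that sentence added, and with the routine verifications you already flag (perturbing basepoints into $A_{\cTp}$, stability of Attouch--Wets limits under convergent translations, and persistence of inclusions under limits along a common subsequence), the proof is complete.
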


\subsection{Covering profiles and dimension bounds for Mattila-Vuorinen type sets}

Finally, we record two upper bounds on the dimension of sets that are locally unilaterally well approximated by a local approximation class $\cS$ with a uniform covering profile. Additional quantitative bounds for locally unilaterally $\varepsilon$-approximable sets may be found in \cite[\S8]{localsetapproximation}.

For reference, let us  recall a definition of Minkowski dimension; e.g., see \cite{Mattila95}.

\begin{definition}Let $A\subseteq\RR^n$, let $x\in\RR^n$, and let $r,s>0$. The \emph{(intrinsic) $s$-covering number of $A$} is defined by \begin{equation*}\covplain(A,s):=\min\left\{k\geq 0:A\subseteq \bigcup_{i=1}^k \ball(a_i,s)\text{ for some }a_i\in A\right\}.\end{equation*}
For bounded sets $A\subseteq\RR^n$, the \emph{upper Minkowski dimension of $A$} is given by \begin{equation*}
 \udim_M(A)=\limsup_{s\downarrow 0} \frac{\log\left(\covplain(A,s)\right)}{\log(1/s)}.
\end{equation*} For unbounded sets $A\subseteq\RR^n$, the \emph{upper Minkowski dimension of $A$} is given  by $$\udim_M(A)=\lim_{t\uparrow\infty} \left(\udim_M A\cap \ball(0,t)\right).$$\end{definition}

Letting $\dim_H(A)$ denote the usual \emph{Hausdorff dimension} of a set $A\subseteq\RR^n$, $$0\leq \dim_H(A)\leq \udim_M(A)\leq n\quad\text{for all }A\subseteq\RR^n,$$ with $\dim_H(A)<\udim_M(A)$ for certain sets. For the definition of Hausdorff dimension, several equivalent definitions of Minkowski dimension, and related results, we refer the reader to Mattila \cite{Mattila95}.

\begin{definition}[{\cite[Definition 8.2 and 8.4]{localsetapproximation}}]\label{coveringprofiles} Let $\cS$ be a local approximation class. We say that \emph{$\cS$ has an $(\alpha, C, s_0)$ covering profile} for some $\alpha>0$, $C>0$, and $s_0\in(0,1]$ provided $\covplain(S\cap B(0,r),sr) \leq Cs^{-\alpha}$ for all $S\in\cS$, $r>0$, and $s\in (0,s_0]$.
\end{definition}

\begin{theorem}[{\cite[Corollary 8.9]{localsetapproximation}}]\label{c:dim3} Let $\cS$ be a local approximation class such that $\cS$ has an $(\alpha, C, s_0)$ covering profile. If  $A\subseteq\RR^n$ is closed and $A$ is locally unilaterally well approximated by $\cS$, then $\udim_M(A)\leq \alpha$.
\end{theorem}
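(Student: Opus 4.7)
\textbf{Proof Proposal for Theorem \ref{c:dim3}.}

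The plan is a straightforward iterated covering argument. Since $\udim_M$ is defined on unbounded sets as the supremum over compact exhaustions, it suffices to bound $\udim_M(K)$ for an arbitrary compact $K \subseteq A$ (e.g.~$K = A \cap \overline{B(0,t)}$). Fix such a $K$. Given a target slack $\eta > 0$, first I would choose $s_1 \in (0, s_0]$ small enough that $\log(C'')/\log(1/s_1) < \eta$, where $C''$ is a constant (slightly larger than $C$) absorbing the geometric losses spelled out below. Then I would choose $\varepsilon \in (0, s_1)$ and use the hypothesis that $A$ is locally unilaterally well approximated by $\cS$ to find $r_K > 0$ such that $\beta_A^{\cS}(x,r) \leq \varepsilon$ for all $x \in K$ and $0 < r \leq r_K$.

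The key one-step covering lemma is the following: for any $x \in A$ with $x \in K$ and any $r \leq r_K$, there exists $S_{x,r} \in \cS$ with $A \cap B(x,r) \subseteq \{y : \dist(y, x + S_{x,r}) \leq \varepsilon r\}$. The covering profile lets us cover $S_{x,r} \cap \overline{B(0,r)}$ by $C s_1^{-\alpha}$ balls of radius $s_1 r$ with centers in $S_{x,r}$; enlarging these by $\varepsilon r < s_1 r$ covers $(A - x) \cap B(0,r)$ by $Cs_1^{-\alpha}$ balls of radius $2 s_1 r$. For each such ball that meets $A$, I would replace its center by a point of $A$ within distance $2s_1 r$; this yields a cover of $A \cap B(x,r)$ by at most $N := C'' s_1^{-\alpha}$ balls of radius $4 s_1 r$ (for a suitable $C''$) whose centers lie in $A \cap B(x,r)$. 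Rescaling $s_1$ by a constant factor (harmless for the asymptotics), I treat this as: $N$ balls of radius $s_1 r$ centered in $A$.

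Now I would iterate. Starting from a cover of $K$ by $N_0$ balls $B(x_i, r_0)$ with $x_i \in K$ and $r_0 \leq r_K$ (where $N_0$ depends only on $K$ and $r_0$), apply the one-step lemma $k$ times to obtain, at stage $k$, a cover of $A$ near $K$ by $N_0 N^k$ balls of radius $s_1^k r_0$, with all centers in $A$. This crucially uses that every center produced at stage $j$ still lies in $K$'s ambient neighborhood and satisfies $s_1^j r_0 \leq r_K$, so the approximation hypothesis applies uniformly. Taking logarithms and letting $k \to \infty$,
\begin{equation*}
\udim_M(K) \leq \limsup_{k\to\infty} \frac{\log(N_0 N^k)}{\log(1/(s_1^k r_0))} = \frac{\log N}{\log(1/s_1)} = \alpha + \frac{\log C''}{\log(1/s_1)} < \alpha + \eta.
\end{equation*}
Since $\eta > 0$ is arbitrary, $\udim_M(K) \leq \alpha$, and passing to a compact exhaustion yields $\udim_M(A) \leq \alpha$.

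The main technical nuisance (rather than a deep obstacle) is bookkeeping the unilateral nature of the approximation: the covering profile produces balls centered on $S_{x,r}$, not on $A$, and a small inflation-plus-recentering is required to feed the output of one stage as valid input to the next. A related subtlety is ensuring $\varepsilon$ is chosen before $r_K$ so that the approximation quality $\varepsilon < s_1$ is consistent with the geometric scale $s_1$ used throughout the iteration; this ordering of quantifiers is what makes the iteration work with a uniform constant $N$.
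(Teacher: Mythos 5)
Your iterated covering argument is correct (modulo the routine enlargement-and-recentering bookkeeping you yourself flag, e.g.\ applying the covering profile at radius $2r$ so that near-boundary points of $x+S_{x,r}$ are caught, and working inside a fixed compact neighborhood $A\cap\{y:\dist(y,K)\leq 1\}$ so the approximation hypothesis applies to the drifted centers). This matches the approach behind the statement: the present paper does not prove it but imports it from \cite[Corollary 8.9]{localsetapproximation}, where the proof is the same one-step covering-profile estimate iterated across scales.
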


\begin{theorem}[{\cite[Corollary 8.12]{localsetapproximation}}] \label{c:dim5} Let $\cS$ be a local approximation class such that $\cS$ has an $(\alpha,C,s_0)$ covering profile. If the subspace topology on $A\subseteq\RR^n$ is $\sigma$-compact and $A$ is locally unilaterally well approximated by $\cS$, then $\dim_H (A)\leq \alpha$.
\end{theorem}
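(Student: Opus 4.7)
The plan is to reduce to compact subsets of $A$ and then invoke Theorem \ref{c:dim3} on each such subset. By the $\sigma$-compactness hypothesis, write $A = \bigcup_{i=1}^\infty K_i$ with each $K_i$ compact. Because Hausdorff dimension is stable under countable unions, it suffices to prove $\dim_H K \leq \alpha$ for an arbitrary compact set $K \subseteq A$; the conclusion then follows from $\dim_H A = \sup_i \dim_H K_i \leq \alpha$. I emphasize that this $\sigma$-compact reduction is genuinely necessary, because upper Minkowski dimension is \emph{not} countably stable, so one cannot simply assert $\udim_M A \leq \alpha$ from Theorem \ref{c:dim3} and use $\dim_H \leq \udim_M$ on $A$ directly.

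Fix a compact $K \subseteq A$. Then $K$ is closed in $\RR^n$, and I claim that $K$ inherits local unilateral well-approximation by $\cS$ from $A$. Indeed, for every compact $L \subseteq K$ and every $\varepsilon > 0$, the set $L$ is also a compact subset of $A$, so the hypothesis on $A$ supplies $r_{L,\varepsilon} > 0$ with $\beta_A^\cS(x, r) \leq \varepsilon$ for all $x \in L$ and $r \leq r_{L,\varepsilon}$. Since $K \subseteq A$, the monotonicity of the relative excess in its first argument gives
\begin{equation*}
\beta_K^\cS(x, r) \;=\; \inf_{S \in \cS} \mud{x}{r}(K, x + S) \;\leq\; \inf_{S \in \cS} \mud{x}{r}(A, x + S) \;=\; \beta_A^\cS(x, r),
\end{equation*}
so $\beta_K^\cS(x, r) \leq \varepsilon$ on the same $(L, r_{L,\varepsilon})$, which proves the claim.

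Theorem \ref{c:dim3} therefore applies to $K$ with the same class $\cS$ and the same covering profile $(\alpha, C, s_0)$, yielding $\udim_M K \leq \alpha$. Since $\dim_H \leq \udim_M$ for any set, we conclude $\dim_H K \leq \alpha$, and the $\sigma$-compact reduction completes the proof. The only substantive ingredient beyond definitions is the monotonicity $\beta_K^\cS \leq \beta_A^\cS$, which is immediate; the conceptual core is that Hausdorff dimension tames countable unions while Minkowski dimension does not, so the compact pieces produced by $\sigma$-compactness serve precisely as the bridge between the already-known Minkowski bound of Theorem \ref{c:dim3} and the desired Hausdorff bound.
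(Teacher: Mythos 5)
Your argument is correct: the monotonicity $\beta_K^\cS\leq\beta_A^\cS$ for $K\subseteq A$, the application of Theorem \ref{c:dim3} to each compact piece, and the countable stability of Hausdorff dimension are exactly the ingredients needed. The paper itself does not reproduce a proof (it cites \cite{localsetapproximation}, Corollary 8.12), and your reduction to compact subsets followed by the Minkowski-dimension bound is essentially the same route taken there, so nothing further is required.
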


\section{Limits of complimentary NTA domains}

For reference, let us recall that a connected open set $\Omega \subset \mathbb R^n$ is called an \emph{NTA domain} (see Definition \ref{ntadomains} and Remark \ref{r:ntadomains}) if there exist constants $M>1$ and $R > 0$
 for which the following are true:
\begin{enumerate}
\item $\Omega$ satisfies the \emph{corkscrew condition}: for all $Q \in \partial \Omega$ and $0 < r < R$, there exists $x\in \Omega\cap B(Q,r)$ such that $\dist(x,\partial\Omega)>M^{-1}r$.
\item $\RR^n\setminus\Omega$ satisfies the corkscrew condition.
\item $\Omega$ satisfies the \emph{Harnack chain condition}: If $x_1, x_2\in \Omega\cap B(Q,r/4)$ for some $Q\in \partial \Omega$ and $0<r<R$, and $\dist(x_1,\partial\Omega)>\delta$, $\dist(x_2,\partial\Omega)>\delta$, and $|x_1-x_2|<2^l\delta$ for some $\delta>0$ and $l\geq 1$, then there exists a chain of no more than $Ml$ overlapping balls  connecting $x_1$ to $x_2$ in $\Omega$ such that for each ball $B=B(x,s)$ in the chain: \begin{align*}&M^{-1}s<\gap(B,\partial\Omega)<Ms, &&\gap(B,\partial\Omega)=\inf_{x\in B}\inf_{y\in\partial\Omega}|x-y|, \\ \diam B> &M^{-1}\min\{\dist(x_1,\partial\Omega),\dist(x_2,\partial\Omega)\}, &&\diam B=\sup_{x,y\in B}|x-y|.\end{align*}
\end{enumerate} The constants $M$ and $R$ are called \emph{NTA constants} of $\Omega$, and the value $R=\infty$ is allowed when $\partial\Omega$ is unbounded. Lemma \ref{closedunderblowup} asserts that if $\RR^n\setminus \Gamma_i=\Omega^+_i\cup\Omega^-_i$, where $\Omega^+_i$ and $\Omega^-_i$ are complimentary NTA domains with NTA constants $M$ and $R$ independent of $i$, and $\Gamma_i\rightarrow\Gamma\neq\emptyset$ in the Attouch-Wets topology, then $\RR^n\setminus\Gamma=\Omega^+\cup\Omega^-$, where $\Omega^+$ and $\Omega^-$ are complimentary NTA domains with constants $2M$ and $R$.

\begin{proof}[Proof of Lemma \ref{closedunderblowup}]
Assume that we are given a sequence $(\Gamma_i,\Omega_i^+,\Omega_i^-)$, constants $M$ and $R$, and a set $\Gamma$ satisfying the hypothesis of the lemma. We note and will frequently use below that $\RR^n\setminus\overline{\Omega_i^\pm}=\Omega_i^\mp$, $\Gamma_i=\partial\Omega^\pm_i$, and $\RR^n=\Omega^+_i\cup \Gamma_i\cup\Omega^-_i$ by the separation condition on $\Gamma_i$ and the corkscrew conditions for $\Omega_i^\pm$.

\emph{Step 0 (Definition of $\Omega^+$ and $\Omega^-$).} Because the sequence $(\Gamma_i)_{i=1}^\infty$ does not escape to infinity (as $\Gamma_i\rightarrow\Gamma$), neither do $(\overline{\Omega^\pm_i})_{i=1}^\infty$. Thus, there is a subsequence of $(\Gamma_i,\Omega_i^+,\Omega_i^-)$ (which we relabel) and nonempty closed sets $F^+,F^-\subseteq\RR^n$ such that $\overline{\Omega_i^\pm}\rightarrow F^\pm$. Here and below, convergence of a sequence of nonempty closed sets in $\RR^n$ is always taken with respect to the Attouch-Wets topology; we refer the reader to  \S\S A.1 and A.2 above for a brief introduction to this topology and to \cite[Chapter 4]{RWbook} or \cite[Chapter 3]{Beer} for the rest of the  story. Consider the open sets $\Omega^+$ and $\Omega^-$ defined by  $$\Omega^+ = \RR^n\setminus F^-\quad\text{and}\quad \Omega^- = \RR^n\setminus F^+.$$ We will show that $\RR^n\setminus \Gamma=\Omega^+\cup\Omega^-$ and $\Omega^+$ and $\Omega^-$ are complementary NTA domains with NTA constants $2M$ and $R$.

\emph{Step $\frac12$ ($\Omega^+$, $\Gamma$, and $\Omega^-$ are disjoint).} First, because  $\Gamma_i\subseteq\overline{\Omega^\pm_i}$ for all $i\geq 1$, $\Gamma_i\rightarrow\Gamma$, and $\overline{\Omega^\pm_i}\rightarrow F^\pm$, we have $\Gamma\subseteq F^\pm$, as well. Hence, by definition of $\Omega^\pm$, $$\Gamma\cap \Omega^\pm\subseteq F^\mp\cap \Omega^\pm=F^\mp\setminus F^\mp = \emptyset.$$
Next, suppose that $x\in \Omega^\pm$. Then $x\not\in F^\mp$, whence $\dist(x,F^\mp)=\delta$ for some $\delta>0$. Since $\overline{\Omega^\mp_i}\rightarrow F^\mp$, it follows that $\dist(x,\overline{\Omega^\mp_i})\geq \delta/2$ for all $i\gg 1$. In particular, $x\in \Omega^\pm_i \subseteq\overline{\Omega^\pm_i}$ for all $i\gg 1$, because $\RR^n\setminus\overline{\Omega_i^\mp}=\Omega_i^\pm$. Since $\overline{\Omega^\pm_i}\rightarrow F^\pm$, we obtain $x\in F^\pm$.  Thus, $x\not\in \Omega^\mp$ whenever $x\in\Omega^\pm$. We conclude that $\Omega^+\cap\Omega^-=\emptyset$.

\emph{Step 1 ($\RR^n=\Omega^+\cup\Gamma\cup\Omega^-$).} Let $x\in\RR^n$. Because $\RR^n=\overline{\Omega_i^+}\cup\overline{\Omega_i^-}$, at least one of the following alternatives occur: $x\in \overline{\Omega^+_i}$ for infinitely many $i$ or $x\in \overline{\Omega^-_i}$ for infinitely many $i$. Hence $x\in F^+$ or $x\in F^-$, since $\overline{\Omega_i^+}\rightarrow F^+$ and $\overline{\Omega_i^-}\rightarrow F^-$. As $x$ was arbitrary, we have $$\RR^n=F^+\cup F^-=(F^+\setminus F^-)\cup (F^+\cap F^-)\cup (F^-\setminus F^+)=\Omega^+\cup(F^+\cap F^-)\cap \Omega^-.$$ Therefore, as soon as we show that $F^+\cap F^-=\Gamma$, we will have  $\RR^n=\Omega^+\cup\Gamma\cup\Omega^+$.

To prove that $F^+\cap F^-\subseteq\Gamma$, suppose that $y\in F^+\cap F^-$. Since $\overline{\Omega^\pm_i}\rightarrow F^\pm$, we can locate points $y^\pm_i\in\overline{\Omega^\pm_i}$ such that $y^\pm_i\rightarrow y$. The line segment between $y^+$ and $y^-$ must intersect $\Gamma_i=\overline{\Omega^+_i}\cap \overline{\Omega^-_i}$, say $Q_i\in [y_i^+,y_i^-]\cap \Gamma_i$. Then $Q_i\rightarrow y$, and because $\Gamma_i\rightarrow \Gamma$, we obtain $y\in \Gamma$. Thus, $F^+\cap F^-\subseteq\Gamma$.

To prove that $\Gamma\subseteq F^+\cap F^-$, suppose that $z\in \Gamma$. Since $\Gamma_i\rightarrow\Gamma$, there exists $z_i\in\Gamma_i$ such that $z_i\rightarrow \Gamma$. Because $\Gamma_i=\partial\Omega^+=\partial\Omega^-$, we can locate points $z_i^\pm\in \Omega^\pm_i \cap B(z_i,1/i)$. Then $z_i^\pm\rightarrow z$, and because $\overline{\Omega^\pm_i}\rightarrow F^\pm$, we obtain $z\in F^+\cap F^-$. Thus, $\Gamma\subseteq F^+\cap F^-$.

\emph{Step $\frac32$ ($\partial\Omega^\pm\subseteq \Gamma$).} Since $\Omega^+$ and $\Omega^-$ are open and disjoint (Steps 0 and $\frac12$), $\Omega^\pm$ coincides with the interior of $\Omega^\pm$ and $\Omega^\mp$ is contained in the exterior of $\Omega\pm$. Therefore, the boundary of $\Omega^\pm$ must be contained in $\RR^n\setminus(\Omega^\pm\cup\Omega^\mp)=\Gamma$ (Step 1).

\emph{Step 2 (Corkscrew condition for $\Omega^\pm$).} Suppose that $Q\in\partial\Omega^\pm$ and $0<r<R$. By Step $\frac32$, $Q\in\Gamma$. Since $\Gamma_i\rightarrow\Gamma$, there exists $Q_i\in\Gamma_i=\partial\Omega^\pm_i$ such that $Q_i\rightarrow Q$. By the corkscrew condition for $\Omega_i^\pm$, there exists a point $y^\pm_i\in\Omega^\pm_i\cap B(Q_i,\tfrac34 r)$ such that $$\dist(y^\pm_i,\overline{\Omega^\mp_i})=\dist(y^\pm_i,\partial\Omega^\pm_i)> \tfrac34r/M.$$ Assume $i\geq 1$ is sufficiently large such that $$y^\pm_i\in B(Q_i,\tfrac34r)\subset B(Q,\tfrac45r)\quad\text{and}\quad\dist(y_i^\pm, F^\mp)\leq |y_i^\pm-Q|<\tfrac45r.$$ Then $\dist(y_i^\pm, F^\mp)=\dist(y_i^\pm,F^\mp\cap B(Q,\tfrac45r))$. Hence, by the triangle inequality for excess, \begin{equation*}\begin{split}\dist(y_i^\pm,\overline{\Omega_i^\mp}) &\leq \dist(y_i^\pm, F^\mp\cap B(Q,\tfrac45 r)) + \ex(F^\mp\cap B(Q,\tfrac45r),\overline{\Omega^\mp_i})
\\ &=\dist(y_i^\pm, F^\mp) + \ex(F^\mp\cap B(Q,\tfrac45r),\overline{\Omega^\mp_i}).\end{split}\end{equation*} The last term vanishes as $i\rightarrow\infty$, since $\overline{\Omega^\mp_i}\rightarrow F^\mp$ in the Attouch-Wets topology. Thus, \begin{equation}\label{e:cks1}\dist(y_i^\pm,F^\mp)\geq \dist(y_i^\pm,\overline{\Omega^\mp_i})-\ex(F^\mp\cap B(Q,\tfrac45r),\overline{\Omega^\mp_i})> \tfrac23r/M\quad\text{for all }i\gg 1.\end{equation} By compactness, we can choose subsequences $(y^\pm_{ij})_{j=1}^\infty$ of $(y^\pm_i)_{i=1}^\infty$ such that $y^\pm_{ij}\rightarrow y^\pm$ for some $y^\pm\in \overline{B(Q,\tfrac45r)}\subset B(Q,r)$. By \eqref{e:cks1}, it follows that $\dist(y^\pm, F^\mp)\geq \frac23 r/M>\frac12 r/M.$ Thus, $y^\pm \in \Omega^\pm\cap B(Q,r)$ and $$\dist(y^\pm, \partial\Omega^\pm)=\dist(y^\pm,F^\mp)>\tfrac{1}{2}r/M.$$ Therefore, $\Omega^\pm$ satisfies the corkscrew condition with constants $2M$ and $R$. We note that by an obvious modification of the argument, one can show that $\Omega^\pm$ satisfies the corkscrew condition with constants $M'$ and $R$ for all $M'>M$.

\emph{Step $\frac52$ ($\partial\Omega^\pm=\Gamma$).} By Step $\frac32$, $\partial\Omega^\pm\subseteq\Gamma$. To see that $\Gamma\subseteq\partial\Omega^\pm$, suppose that $Q\in\Gamma$. By the proof of Step 2, the ball $B(Q,r)$ contains points in $\Omega^\pm$ for all $0<r<R$. Because $\Omega^\mp$ is disjoint from $\Omega^\pm$, it follows that $Q\in\partial\Omega^\pm$. We conclude that $\partial\Omega^\pm=\Gamma$.

\emph{Step 3 (Harnack chain condition for $\Omega^\pm$).} Assume that $x_1,x_2\in\Omega^\pm\cap B(Q,r/4)$ for some $Q\in\Gamma=\partial\Omega^\pm$ and $0<r<R$. Furthermore, assume that  $\delta_1:=\dist(x_1,\partial\Omega)>\delta$, $\delta_2:=\dist(x_2,\partial\Omega)>\delta$, and $|x_1-x_2|<2^l\delta$ for some $\delta>0$ and $l\geq 1$. We must show that $x_1$ can be connected to $x_2$ in $\Omega^\pm$ by a ``short" chain of balls in $\Omega^\pm$ remaining ``far away" from the boundary $\partial\Omega^\pm$, or equivalently, remaining ``far away" from $F^\mp$. Since $\Gamma_i\rightarrow\Gamma$, there exists $Q_i\in\Omega^\pm_i$ such that $Q_i\rightarrow Q$. Because $\overline{\Omega^\mp_i}\rightarrow F^\mp$ in the Attouch-Wets topology, for all $i\geq 1$ sufficiently large, $r(1+|Q-Q_i|)<R$, $x_1, x_2\in \Omega_i^\pm\cap B(Q_i,r(1+|Q-Q_i|)/4)$, and \begin{equation*}\begin{split} \dist(x_1,\partial\Omega_i^\pm)&>\delta_1/2>\delta/2,\quad
\dist(x_2,\partial\Omega_i^\pm) > \delta_2/2>\delta/2,\\ &\text{and}\quad |x_1-x_2| < 2^l\delta=2^{l+1}\delta/2.\end{split}\end{equation*} (The details are similar to those written in the proof of the corkscrew condition in Step 2.) By the Harnack chain condition for $\Omega^\pm_i$, we can find a chain of no more than $M(l+1)\leq 2Ml$ balls connecting $x_1$ to $x_2$ in $\Omega^\pm_i$ such that for each ball $B=B(x,s)$ in the chain, $$M^{-1}s<\gap(B,\partial\Omega^\pm_i)<Ms$$ and $$\diam B> M^{-1}\min\{\dist(x_1,\partial\Omega^\pm_i),\dist(x_2,\partial\Omega^\pm_i)\}.$$ Since $\overline{\Omega_i^\mp}\rightarrow F^\mp$ in the Attouch-Wets topology, it follows that for all sufficiently large $i$, $$(2M)^{-1}s<\gap(B,\partial\Omega^\pm)<2Ms$$ and $$ \diam B> (2M)^{-1}\min\{\dist(x_1,\partial\Omega^\pm),\dist(x_2,\partial\Omega^\pm)\}.$$ (Again, the details are similar to those in Step 2.) By the gap condition, we also know  each ball in the chain belongs to $\Omega^\pm$. Therefore, $\Omega^\pm$ satisfies the Harnack chain condition with constants $2M$ and $R$. We remark that given the discrete nature of the constant in the Harnack chain condition (counting balls), we cannot expect to be able to replace $2M$ by $\lambda M$ for arbitrary $\lambda>1$.

\emph{Step 4 ($\Omega^+$ and $\Omega^-$ are connected).} It is well known that every NTA domain is a uniform domain with constants that depend only on the interior corkscrew condition and Harnack chain condition; e.g., see \cite[Theorem 2.15]{chord-arc-5}. Explicitly, this means that for every $M>1$ and $R>0$, there exists $C>1$ and $c\in(0,1)$ such that for every NTA domain $\Omega\subseteq\RR^n$ with NTA constants $M$ and $R$, and for every $x_0,x_1\in\Omega$, there exists a continuous path $\gamma:[0,1]\rightarrow\Omega$ such that $\gamma(0)=x_0$, $\gamma(1)=x_1$, $\mathrm{length}(\gamma)\leq C|x_0-x_1|$, and $\dist(\gamma(t),\partial\Omega)\geq c\min\{\dist(x_0,\partial\Omega),\dist(x_1,\partial\Omega)\}$ for all $t\in[0,1]$.

Let $x_0$ and $x_1$ be arbitrary distinct points in $\Omega^\pm$, and set $$\delta=\min\{\dist(x_0,\partial\Omega^\pm),\dist(x_1,\partial\Omega^\pm)\}
=\min\{\dist(x_0,F^\mp),\dist(x_1,F^\mp)\}.$$ Assign $B=B(x_0,3C|x_0-x_1|+3\delta)$, where $C$ is the constant from the previous paragraph. Note that $B$ contains $x_0$, $x_1$, and every path passing through $x_0$ of length no greater than $C|x_0-x_1|$, and the closest point in $F^\mp$ for each item listed above, with room to spare. Since $\overline{\Omega^\mp_i}\rightarrow F^\mp$ in the Attouch-Wets topology,  \begin{equation}\label{e:Fex} \ex(\overline{\Omega^\mp_i}\cap B,F^\mp)< \tfrac13c\delta\quad\text{and}\quad
 \ex(F^\mp\cap B,\overline{\Omega^\mp_i})<\tfrac13c\delta\quad\text{for all }i\gg 1,\end{equation} where $c$ is the constant from the previous paragraph. Pick any $i$ such that \eqref{e:Fex} holds. Then $\dist(x_0,\overline{\Omega^\mp_i})\geq (1-c/3)\delta> \frac23\delta$ and $\dist(x_1,\overline{\Omega^\mp_i})\geq (1-c/3)\delta>\frac23\delta$. In particular, $x_0,x_1\in\Omega^\pm_i$ and $\min\{\dist(x_0,\partial\Omega^\pm_i),\dist(x_1,\partial\Omega^\pm_i)\}>\frac23\delta$. Since $\Omega^\pm_i$ is an NTA domain with NTA constants $M$ and $R$, by the previous paragraph we can find a continuous path $\gamma:[0,1]\rightarrow\Omega^\pm_i$ such that $\gamma(0)=x_0$, $\gamma(1)=x_1$, $\mathrm{length}(\gamma)\leq C|x_0-x_1|$, and $\dist(\gamma(t),\overline{\Omega^\mp_i})=\dist(\gamma(t),\partial\Omega^\pm_i)>\frac23 c\delta$ for all $t\in[0,1]$. Using \eqref{e:Fex} once again, we obtain $\dist(\gamma(t),F^\mp)>\frac13c\delta$ for all $t\in[0,1]$. In particular, $\gamma(t)\in\Omega^\pm$ for all $t\in[0,1]$. Thus, $\gamma$ is a continuous path joining $x_0$ to $x_1$ inside the set $\Omega^\pm$. Since $x_0$ and $x_1$ were fixed arbitrarily, we conclude that $\Omega^\pm$ is connected.

\emph{Conclusion.} We have shown that $\RR^n\setminus\Gamma=\Omega^+\cup\Omega^-$ (Step 1), where $\Omega^+$ and $\Omega^-$ are open (Step 0), connected (Step 4), and satisfy  corkscrew (Step 2) and Harnack chain conditions (Step 3) with constants $2M$ and $R$. Therefore, $\RR^n\setminus \Gamma=\Omega^+\cup\Omega^-$ is the union of complimentary NTA domains $\Omega^+$ and $\Omega^-$ with NTA constants $2M$ and $R$, as desired.
\end{proof}

\bibliography{bet-hsing}{}
\bibliographystyle{amsalpha}

\end{document}